\documentclass{amsart}
\usepackage{amssymb}
\usepackage{graphics,color}
\usepackage[latin1]{inputenc}
\usepackage[pagebackref=false]{hyperref}

\newtheorem{theorem}{Theorem}[section]
\newtheorem{thm}[theorem]{Theorem}
\newtheorem{lem}[theorem]{Lemma}
\newtheorem{prop}[theorem]{Proposition}
\newtheorem{cor}[theorem]{Corollary}
\theoremstyle{definition}
\newtheorem{defn}[theorem]{Definition}
\theoremstyle{remark}
\newtheorem{rem}[theorem]{Remark}
\newtheorem{ex}[theorem]{Example}
\newtheorem{que}[theorem]{Question}

\newcommand{\fm}{\mathfrak m}
\newcommand{\la}{\lambda}

\newcommand{\Ap}{\textrm{Ap}}
\newcommand{\Hi}{\textrm{H}}
\newcommand{\w}{\omega}
\newcommand{\ord}{\textrm{ord}}
\newcommand{\W}{\Omega}
\newcommand{\tord}{\textrm{tord}}

\begin{document}

\bibliographystyle{amsplain}

\title{On the Apéry sets of monomial curves }

\author[T. Cortadellas Ben\'{\i}tez] {Teresa Cortadellas Ben\'{\i}tez}
\address{ Teresa Cortadellas Ben\'{\i}tez\\Departament d' \'{A}lgebra i Geometria, Universitat de Barcelona, Gran Via 585,
08007 Barcelona, Spain.}

\email{terecortadellas@ub.edu}

\author[R. Jafari]{Raheleh Jafari}
\address{Raheleh Jafari\\School of Mathematics, Institute for
Research in Fundamental Sciences (IPM) P. O. Box: 19395-5746,
Tehran, Iran.}

\email{rjafari@ipm.ir}

\author[S. Zarzuela Armengou]{Santiago Zarzuela Armengou}
\address{ Santiago Zarzuela Armengou\\Departament d' \'{A}lgebra i Geometria, Universitat de Barcelona, Gran Via 585,
08007 Barcelona, Spain.}

\email{szarzuela@ub.edu}

\subjclass[2000]{13A30, 13H10, 13P10}

\keywords{numerical semigroup ring, Apéry set, monomial curve, tangent cone, Hilbert function, Gorenstein, Cohen-Macaulay, Buchsbaum.}

\thanks{Teresa Cortadellas Benítez and Santiago Zarzuela Armengou were supported by MTM2010-20279-C02-01.}

\thanks{Raheleh Jafari was supported in part by a grant from IPM (No. 900130068), by MTM2010-20279-C02-01, and by the IMUB of the University of Barcelona.}

\maketitle


\begin{abstract}
In this paper, we use the Apéry table of the numerical semigroup
associated to an affine monomial curve in order to characterize
arithmetic properties and invariants of its tangent cone. In
particular, we precise the shape of the Apéry table of  a numerical
semigroup of embedding dimension 3, when the  tangent cone of its
monomial curve is Buchsbaum or $2$--Buchsbaum, and give new proofs
for two conjectures raised by V. Sapko (Commun. Algebra
{29}:4759--4773, 2001) and Y. H. Shen (Commun. Algebra
{39}:1922--1940, 2001). We also provide a new simple proof in the
case of monomial curves for Sally's conjecture (Numbers of
Generators of Ideals in Local Rings, 1978) that the Hilbert function
of a one-dimensional Cohen-Macaulay ring with embedding dimension
three is non--decreasing. Finally, we obtain that monomial curves of
embedding dimension 4 whose tangent cones are Buchsbaum, and also
monomial curves of any embedding dimensions whose numerical
semigroups are balanced, have non--decreasing Hilbert functions.
Numerous examples are provided to illustrate the results, most of
them computed by using the NumericalSgps package of GAP (Delgado et
al., NumericalSgps-a GAP package, 2006).

\end{abstract}

\section{Introduction}

A numerical semigroup $S$ is a subset of the non-negative integers
$\mathbb N$, closed under addition, that contains $0$ and has finite
complement in $\mathbb N$. The condition $\# \,  \mathbb N \setminus S < \infty$ is equivalent to impose that $gcd(A)=1$ for every system of generators $A$ of $S$. Every numerical semigroup $S$ is finitely generated and has a unique minimal system of generators $n_1,\dots ,n_b$; that is
\[S=\langle n_1,\dots ,n_b \rangle :=\{ r_1n_1+\cdots+r_bn_b;
r_i\in \mathbb N \}\] with $n_1<\cdots <n_b$ and $n_i\notin \langle n_1,\dots , \widehat{n_i} , \dots , n_b \rangle$. Then we will say that $S$ is minimally generated by $\{n_1,\dots ,n_b\}$. The values $n_1$ and $b$ are known respectively as the multiplicity and the embedding dimension of $S$.

Let $S$ be a numerical semigroup minimally generated by $\{n_1,
\dots ,n_b\}$  and $k$ be a field. The ring
 \[k[[S]]:=k[[t^s;s\in S]]=k[[t^{n_1},\dots ,t^{n_b}]]\subset
 k[[t]],\] is called the numerical semigroup ring
 associated to $S$ which is a one-dimensional local domain with
 maximal ideal $\frak m=(t^{n_1},\dots ,t^{n_b})$,
 quotient field $k((t))$ and its  integral closure is the discrete valuation ring $k[[t]]$. So,
 $k[[S]]$ is an analytically irreducible and residually rational ring (i.e. the completion of the ring is an integral domain and the residue fields of the ring and of its integral closure coincide). This ring is indeed the coordinate ring of a monomial curve in the affine space $\Bbb{A}^{b}_k$ defined
 parametrically by $X_0=t^{n_1},\ldots, X_{b-1}=t^{n_b}$. We will denote by $v$ the t-adic valuation in $k[[t]]$. We
 will also consider the tangent cone associated to $k[[S]]$; that is
 the graded ring
 \[ G(S):=\bigoplus_{n\geq 0} \fm^n/\fm^{n+1}. \]

The main goal of this paper is to study arithmetic properties and invariants of the tangent cones of numerical semigroup rings. For that, we will use some Apéry sets associated to $S$, specifically, the Apéry sets with respect to $n_1$ of the ideals $nM$, where $M = S \setminus \{0\}$ is the maximal ideal of S and $n$ is a non-negative integer. In fact, only a finite number of such sets are needed, those corresponding to $nM$ with $0\leq n \leq r$, where $r$ is the reduction number of $S$. By using a result of V. Barucci and R. Fröberg \cite{BF1} about the existence of Apéry basis, we may organize these Apéry sets in a table that we call the Apéry table of $M$.

It has been shown by two of the authors of the present paper in \cite{CZ1} that this Apéry table provides precise information about the structure of $G(S)$ as a graded module over the fiber cone of the ideal generated by the minimal reduction $x = t^{n_1}$, which is a polynomial ring in one variable over $k$. This structure is in principle weaker than the structure of $G(S)$ as a ring itself, but as it was observed in the more general context of the study of the fiber cone of ideals with analytic spread one \cite{CZ3}, it provides enough information to determine several invariants and properties of the tangent cone, such as the regularity or the Cohen-Macaulay property. Moreover, in \cite{CZ2} the family of invariants given by this structure was explicitly related to other families of invariants of one-dimensional local rings like the microinvariants defined by J. Elias \cite{E2}, or the Apéry invariants defined by Barucci-Fröberg in \cite{BF}, see also \cite{BF1}.
One of the advantages of the Apéry table consists in its easy computation, avoiding for instance the study of explicit presentations of the tangent cone.

In this paper we go further in the applications of the Apéry table to determine new properties of tangent cones of numerical semigroup rings. They are mainly related to the $k$-Buchsbaum property and the behavior of the Hilbert function, and on the way, we also get simpler proofs of several known results concerning these properties. Of course, the use of Apéry sets to study the tangent cones of numerical semigroup rings is not new and it is explicit or implicitly contained in several papers of the extensive literature on the subject. Maybe one of the most well known results was first obtained by A. García \cite[pag. 403]{G} concerning the Cohen-Macaulayness of the tangent cone.

Now we describe the contents of this work. In Section 2, we first review some notions of numerical semigroup rings
and Apéry sets, and recall how to recover from the Apéry set of $S$ with respect to the multiplicity most of the basic invariants of the ring. Then we introduce our main tool in this paper, the Apéry table, and show how to read from it many properties of the tangent cone, in particular the Cohen-Macaulay and Gorenstein properties, or the Hilbert function of $k[[S]]$ and its behavior. Then we apply this method to study systematically the case of embedding dimension $2$ (plane monomial curves) and easily recover most of the known results in this case. We finish by studying the recent case considered by P. A García Sánchez, I. Ojeda, and J. C. Rosales \cite{GOR} of numerical semigroups with a unique Betti element, proving among other things that their tangent cone is Gorenstein.

The property for $G(S)$ to be Cohen-Macaulay has been studied
extensively by many authors, whereas the concept of a
Buchsbaum ring as the most important generalization of
Cohen-Macaulay rings, has not yet been so well characterized (or more in general, the $k$-Buchsbaum property). Some results for the Buchsbaum case by using Apéry sets may be found, for instance, in the paper by M. D'Anna, V. Micale and A. Sammartano \cite{DMS2}. On the other hand, in \cite{S}, V. Sapko, gave sufficient conditions for $G(S)$ to be Buchsbaum in embedding dimension $3$ and made two conjectures for equivalent conditions. These conjectures have been solved by Y. H. Shen \cite{Sh} and also by D'Anna-Micale-Sammartano \cite{DMS} independently
and by different methods. In Section 3, we first analyze in detail the case of embedding dimension $3$ with multiplicity $4$. Then, for the general case of embedding dimension three, we use the information given by Apéry table to describe completely the structure of $G(S)$ when it is Buchsbaum, providing as a consequence a new and simpler  proof of Sapko's conjectures. We also do the same in the more difficult $2$-Buchsbaum case, recovering also a result by Shen \cite{Sh} in this connection. We finish by showing that the above results cannot be extended to higher $k$-Buchsbaum and making an estimation of how this could be done.

In Section 4 we study the Hilbert function of $k[[S]]$. The problem of the behavior of the Hilbert function of a monomial curve, or more in general, of Cohen-Macaulay local rings of dimension one, has been studied by many authors from different points of view, in particular when it is non-decreasing. J. Sally \cite[page 40]{Sa} stated that loc. cit. \emph{it seems reasonable that the Hilbert function of a one-dimensional Cohen-Macaulay local ring with small
enough (say, at most three?) embedding dimension, is non-decreasing} (what is commonly known as Sally's conjecture). When the tangent cone is Cohen-Macaulay it is well known that the Hilbert function is non-decreasing, and so the result holds when the embedding dimension is $2$ (although the first proof of this fact was stated by E. Matlis \cite{M}). In \cite{E1}, Elias proved that Sally's statement is true in the equicharacteristic case of embedding dimension $3$, see also J. Elias and J. Martínez-Borruel \cite{EM} for a recent extension of this result to the non-equicharacteristic case. For higher embedding dimension the result is not true  in general. For instance, for any $d\geq 5$, there are examples by F. Orecchia \cite{O} of
reduced one-dimensional local rings of embedding dimension $d$ with
decreasing Hilbert function, and for embedding dimension $d=4$ there are examples by S. K. Gupta and L. G. Roberts \cite{GuRo}. But all these rings are not numerical semigroup
rings. For this special case, there are examples of embedding dimension $10$ with decreasing Hilbert functions by J. Herzog and R. Waldi in \cite{HW}, and of embedding dimension $12$ by P. Eakin and A. Sathaye in \cite{ES}. As far as we know, there are no examples of numerical semigroup rings of embedding dimension $4\leq
d\leq 9$ with decreasing Hilbert function. On the other hand, among the variety of positive results for numerical semigroup rings, we mention the recent paper by F. Arsalan, P. Mete and M. \c{S}ahin \cite{AMS} where some new cases are provided in order to support a conjecture by M. Rossi that the Hilbert function of a one-dimensional Gorenstein local ring of embedding dimension $4$ is non-decreasing. Their technique is based on the gluing of numerical semigroups. Our main result in this section is that the Hilbert function of a numerical semigroup ring of embedding dimension $4$ whose tangent cone is Buchsbaum is non-decreasing. We also get a very easy proof of Sally's conjecture for monomial curves in embedding dimensions $3$, and close this section by extending a recent result of D. P. Patil and G. Tamone \cite{PT} on the non-decreasing of Hilbert functions of balanced numerical semigroups.

\medskip

We provide many explicit examples along the paper. Most of the computations have done by using the NumericalSgps package of GAP \cite{Num}.

\section{The Apéry table of a monomial curve }

Throughout $S$ is a numerical semigroup minimally generated by
$\{n_1,\ldots,n_b\}$ with $n_1<\cdots<n_b$ and $k$ is a field. For the general notations and results about numerical semigroups and numerical semigroup rings we shall use the books by P. A. García Sánchez and J. C. Rosales \cite{GR} and by V. Barucci, D. E. Dobbs, and M. Fontana \cite{BDF}. A relative ideal of $S$ is a nonempty set $H$ of integers such that
$H+S\subset H$ and $d+H\subseteq S$ for some $d\in S$. An ideal of
$S$ is then a relative ideal of $S$ contained in $S$.  We denote by
$M$ the maximal ideal of $S$, that is, $M=S\setminus \{0\}$. If $L$
and $H$ are relative ideals of $S$ then $L+H=\{l+h;l\in L, h\in
 H\}$ is also a relative ideal of $S$. If $z\in \mathbb Z$, then
 $z+S=\{z+s; s\in S\}$ is the principal relative ideal of $S$
 generated by $\{z\}$ and if $z_1,\dots ,z_n\in \mathbb Z$, the relative ideal generated by
$\{z_1,\dots ,z_n\} \subset \mathbb Z$ is $(z_1+S)\cup \cdots \cup (z_n+S)$
that we will denote by $(z_1,\dots ,z_n)+S$. $M$ is
then the ideal generated by a system of generators of $S$. If $I$ is a
 fractional ideal of $k[[S]]$ then $v(I)$ is a relative ideal of $S$
 and for $J\subset I$ fractional ideals of $k[[S]]$, then $\la (I/J)=\# v(I)\setminus
 v(J)$, where $\la$ denotes the length. Thus, $v(k[[S]])=S$, $v(\fm^n )=nM=M+\stackrel{n}{\cdots} +M$ for any positive integer $n$, and if $I=(t^{i_1},\dots , t^{i_k}) \subset k[[S]]$ then $v(t^{i_1},\dots ,
 t^{i_k})=(i_1,\dots ,i_k)+S$.
\medskip

For $s\in S$ we consider the order of $s$, that we will denote by
$\ord(s)$, as the integer $k$ such that $s\in kM\setminus (k+1)M$.
In particular $s$ may be represented as $s=r_1n_1+\cdots+r_bn_b$
($r_i\geq 0$) with $\sum_{i=1}^br_i=k$ maximum over all
representations of $s$ in $S$. We call this representation a maximal
expression of $s$.

\begin{rem}
\label{rem1} Note that if $s=r_1n_1+\cdots+r_bn_b$ is a maximal
expression of $s$ then any subrepresentation of $s$ of the form
$s'=r'_1n_1+\cdots+r'_bn_b$ with $r'_i \leq r_i$ for all $i=1, \dots
, b$ is also a maximal expression of $s'$. We shall use this fact
frequently throughout this paper.
\end{rem}

If $s\in S$ has order $k$ then $t^s\in \fm^k \setminus \fm^{k+1}$
and we will denote by $(t^s)^{\ast} \in \fm^k/\fm^{k+1}
\hookrightarrow G(S)$ its initial form. Thus, we have a map
\begin{equation}
\begin{array}{lll}
\label{order}
S & \longrightarrow & G(S)\\
s & \mapsto & (t^s)^{\ast}
\end{array}
\end{equation}

Note that for two elements $s, s' \in S$, $(t^s)^*(t^{s'})^* =
(t^{s+s'})^*$ if and only if the sum of two respective maximal
expressions of $s$ and $s'$ is a maximal expression of $s + s'$.

\medskip

The  element $x=t^{n_1}$ generates a minimal reduction of $\frak
 m$ (equivalently $x$ is a superficial element of degree one. See \cite{Sa1} for the definitions of minimal reduction and superficial element.) and the following equalities hold for the local ring $A=k[[S]]$:

  \[
\begin{array}{l}
 e=n_1=\la (A/xA)=\# S\setminus
  (n_1+S)
  ,\\
  b=\la (\fm/\fm^2) =\# M\setminus
  2M,\\
    r= \min \{r\in \mathbb N \mid
  \fm^{r+1}=x\fm^r\}=
  \min \{r\mid
  (r+1)M= n_1+ rM \},\\
  \rho=\la (A[\frac{\fm}{x}]/A) =\# \langle n_1,n_2-n_1,\ldots, n_b-n_1\rangle \setminus S ,\\
  \mathfrak{c}=\min \{ c\in \mathbb N \mid c+n \in S \textrm{ for all } n\in
 \mathbb N \},\\
\delta =\la (\overline A/A)=\# \mathbb N\setminus S,\\
\tau =\la ((x\fm :\fm)/xA)=\la (\fm^{-1}/A)=\# \{n\in \mathbb Z\mid n+M
\in S\}\setminus S,
\end{array}
\]
where $e$ is the multiplicity of $A$, $b$ the embedding dimension,
$r$ the reduction number, $A[\frac{\fm}{x}]$ is the blow up ring of
$\fm$ in $A$, $ \rho $ the $N$-reduction number, $\overline{A}$ is the integral closure of $A$, $\mathfrak{c}$ is the conductor of $S$ and such that $C=t^\mathfrak{c}k[[t]]$ is the conductor of $A$ in $\overline A$, $\delta$ is
the degree of singularity of $A$ and $\tau$ is the Cohen-Macaulay type
of $A$. The following relations are  known for one dimensional
Cohen-Macaulay rings
\[\begin{array}{l}
b=e-\la(\fm^2 /x\fm) \leq e\\
r\leq e-1 \\
\rho=e-1+\sum_{i=1}^{r-1}\la(\fm^{i+1}/x\fm^i) \geq e-1\\
r\leq 1 \Leftrightarrow b=e \Leftrightarrow \rho =e-1
\end{array}
\]
and, since $A$ is analytically irreducible and residually rational,
also
\[
\begin{array}{l}
2(\mathfrak{c}-\delta)\leq \mathfrak{c} \leq (\tau +1)(\mathfrak{c}-\delta)\\
\tau =1 \Leftrightarrow \mathfrak{c}=2\delta
\end{array}
\]
(and recall that $A$ is Gorenstein exactly when $\tau =1$).

\medskip

We will denote by
\[
\Hi(n)=\mu(\fm^n)=\la(\fm^n/\fm^{n+1})=\# nM\setminus (n+1)M
\]
 the
Hilbert function of $k[[S]]$. We also recall that
$\Hi(n)=e-\la(\fm^{n+1}/x\fm^n)$ and, in particular, $e=\mu(\fm^n)$
for $n\geq r$.

\medskip

Let $H$ be a relative ideal  of $S$ and $n\in S$. We will denote by
$\Ap_{n} (H)$ the Apéry set of
 $H$ with respect to $n$; that is the set of the smallest elements in $H$ in each
 congruence class module $n$, equivalently, the set of elements $s$ in $H$ such $s-n\notin H$. In particular, $H = \Ap_{n} (H) + rn$, $r\in \mathbb{N}$. Then, the greatest integer not in $S$ (known as the Frobenius number) is
\[ \max \Ap_n(S)-n\]
and so the conductor is $c=\max \Ap_n(S)-n +1$. It can also be
deduced easily that the degree of singularity of $S$ (also called
the genus) can be calculated as
\[ \delta= \frac{1}{n} \left( \sum_{\w \in \Ap_n(S)}\w \right)
-\frac{n-1}{2},\]
see \cite[Proposition 2.12]{GR}.

\begin{rem}
\label{rem2} Observe that if $s=r_1n_1+\cdots+r_bn_b$ is an element
of $\Ap_n(S)$ then any element of the form
$s'=r'_1n_1+\cdots+r'_bn_b$ with $r'_i \leq r_i$ for all $i=1, \dots
, b$ is also an element of $\Ap_n(S)$.
\end{rem}

For the particular value $n=e=n_1$ we will write $\Ap_n(H) =
\Ap(H)$. Note that in this case $\{ n_2, \dots , n_b \} \subset
\Ap(S)$. Also that for any element $\w \in \Ap(S)$ there is a
maximal expression of the form $\w = \sum _{i=2}^{b}r_in_i$.

\medskip

 Put $W=k[[x]]=k[[t^e]]\subset A$. The following fact proved by Barucci-Fr\"{o}berg \cite[Lemma 2.1]{BF} in the more general setting of one-dimensional equicharacteristic analytically irreducible and residually rational domains, will be crucial for our results (see also \cite[Lemma 2.1]{CZ1} for a proof in the case of numerical semigroup rings): let $I$ be a fractional ideal of $A$ and  $f_0,\dots ,f_{e-1} $ elements in $I$ such that $\{v(f_0), \dots , v(f_{e-1})\} =\Ap(v(I))$. Then $I$ is a free $W$-module with basis $f_0, \dots
,f_{e-1}$.

\medskip

Apéry sets allow to compute most of the invariants introduced above
as the following example shows.

\begin{ex}
\label{s=<4,11,29>}
 Let $S=\langle  4,11,29 \rangle $. For this
semigroup we have multiplicity $e=4$ and embedding dimension $b=3$.
Then:
\[\begin{array}{l}
\Ap(M)=\{4,11,22,29\}\\
\Ap(2M)=\{8,15,22,33\}\\
\Ap(3M)=\{12,19,26,33\}\\
\Ap(4M)=\{16,23,30,37\}
\end{array}
\]
We have $4M=4+3M$ and $r=3$. Moreover
\[
\begin{array}{l}
 M\setminus 2M=\{ 4,11,29 \}\\
2M\setminus 3M=\{ 8,15,22 \}\\
3M\setminus 4M=\{ 12,19,26,33 \}
 \end{array}
\]
and hence the Hilbert function is $\Hi(n)= \{1,3,3,4 \rightarrow
\}$.

The numerical semigroup associated to the ring obtained by blowing
up $\fm$ in $A$ is $S'=\langle 4,7,25 \rangle =\langle 4, 7 \rangle$
and
\[\begin{array}{l}
\Ap(S)=\{0,11,22,29\}\\
    \Ap(S')=\{0,7,14,21\};
\end{array}\]
therefore $S'\setminus S=\{ 7,14,18,21,25 \}$ and $\rho=5$.

The conductor is $\mathfrak{c}=\max \Ap(S)-e+1=29-4+1=26$.

On the other hand, $\mathbb N \setminus
S=\{1,5,9,13,17,21,25,2,6,10,14,18,3,7\}$ and so $\delta =14$.

Since
\[ \begin{array}{ll}
A & = W\oplus Wt^{11}\oplus Wt^{22}\oplus Wt^{29},\\
xA & = Wt^4 \oplus Wt^{15}\oplus Wt^{26}\oplus Wt^{33} \\
\fm &=  Wt^4\oplus Wt^{11}\oplus Wt^{22}\oplus Wt^{29}
\end{array} \] we have that $(0:\fm)_{A/xA}$
\[
=(Wt^{22}\oplus Wt^{29}\oplus Wt^4 \oplus Wt^{15}\oplus
Wt^{26}\oplus Wt^{33})/(Wt^4 \oplus Wt^{15}\oplus Wt^{26}\oplus
Wt^{33})\]
\[\cong W/t^4W\oplus W/t^4W
\]
 and so $\tau =2$.
\end{ex}

Apéry sets also allow to detect when $S$ is symmetric, and so $A$ is
Gorenstein. Recall that, following E. Kunz \cite{Ku}, a numerical semigroup is said to be symmetric if there is an integer $k\notin S$ such that $s\in S$ if and only if $k-s\notin S$. Then, it was shown in \cite[Theorem]{Ku} that a one-dimensional analytically irreducible Noetherian local ring is Gorenstein if and only if its value semigroup is symmetric. Now, if for a given $n\in S$ we write the corresponding Apéry set as $\Ap_{n} (S) = \{0 < a_1 < \cdots < a_{n-1}\}$, then $S$ is symmetric if and only if $a_i + a_{n-i-1} =
a_{n-1}$ for all $i = 0, \dots, n-1$. This is equivalent to the existence of a unique maximal element in $\Ap(S)$ with respect to the natural order in $S$: $x \leq y$ if and only if $y = x + x'$ for some $x' \in S$.
For instance, note that among the two numerical semigroups appearing in the above example, only $S'$ is symmetric.
\medskip

We are now going to introduce the main device we shall use in this
paper. See \cite{CZ3} for the same construction in the more general
setting of the fiber cone of ideals with analytic spread one. Since
$t^eA\subset \fm$ is a minimal reduction, the graded ring
\[ F(t^e):=\bigoplus_{n\geq 0} (t^e)^nA/(t^e)^n\fm \cong
\bigoplus_{n\geq 0} (t^e)^nW/(t^e)^ {n+1}W
\]
is a polynomial ring in one variable over $k$. Moreover, the
extension
\[
F(t^e)\hookrightarrow G(S)\] is finite. As a consequence, since
$F(t^e)$ is a graded principal ideal domain and $G(S)$ is a finite
$F(t^e)$-module, we have a decomposition of $G(S)$ as a direct sum
of a graded finite free $F(t^e)$-module and a finite number of
modules of the form $(F(t^e)/((t^e)^{\ast})^cF(t^e))(k)$, where $k$
is an integer.

As it was noted in \cite{CZ3}, although the structure of $G(S)$ as a
ring is richer than its structure as $F(t^e)$-module, it holds that
$H^0_{G(S)_+}(G(S)) = T(G(S))$, the torsion of $G(S)$ as
$F(t^e)$-module. As a consequence, $G(S)$ is Cohen-Macaulay ring if
and only if $G(S)$ is free as $F(t^e)$-module. On the other hand, an
element of $G(S)$ belongs to the torsion $T(G(S))$ if and only if it
is annihilated by a power of $(t^{e})^{\ast}$. Thus in particular,
$(t^{s})^{\ast}$ is an element of torsion if and only if there
exists an integer $c>0$ such that $\ord(s+ ce)> \ord(s) +c$. Then,
if we define the subset of $S$
\[T :=\{s\in S;\, \exists \,c >0 \textrm{ with }  \ord(s+ c n_1)> \ord(s) +c  \}
\] the map in  (\ref{order}) sends
\[  T \mapsto  T(G(S)).\]
We also define, for $s\in T$, the torsion order of $s$ as
\[\tord(s)=\min \{c>0; \ord(s+cn_1)>\ord(s)+c\}.\]

\begin{rem}
\label{rem3} Note that if an element of $S$ of the form $s + cn_1$ belongs to $T$, then any element of the form $s + c'n_1$ also belongs to $T$, for any $0
\leq c' \leq c$. Also that if $\ord(s + cn_1)> \ord(s) + c$ then
$\ord(s + c'n_1) > \ord(s) + c'$ for any $c'\geq c$.
\end{rem}

\begin{rem}
\label{rem4} It is clear that the elements of the form $kn_1$, with
$k\geq 1$ are never torsion. It is also very easy to see that the
elements of the form $kn_2$, with $k\geq 1$ are never torsion.
\end{rem}

\begin{rem}
\label{rem5} Let $s$ be an element in $T$ with $c=\tord(s)$ and let $s+c n_1=\sum_{i=1}^bs_in_i$ be a maximal
expression of $s+c n_1$. Then it holds that $s_1=0$: otherwise we may write
$s+(c- 1)n_1= (s_1 - 1)n_1 + s_2n_2+\cdots +s_bn_b$, and $\ord(s+ (c- 1)n_1
)\geq (s_1 -1) + s_2+\cdots +s_b> \ord(s) +(c - 1)$ which contradicts the
minimality of $c$.
\end{rem}

In order to study the structure of $G(S)$ more in detail we recall
the considerations made in \cite{CZ1}, coming from an idea in
\cite{BF}. If we put, for $n\geq 0$,
\[ \Ap (nM)=\{\w_{n,0}=ne,\dots,\w_{n,i}, \dots ,\w_{n,e-1}\},\]
with $\w_{n,i}$ congruent to $i$ module $e$, then
\[ \fm^n= Wt^{\w_{n,0}} \oplus \cdots \oplus Wt^{\w_{n,i}} \oplus \cdots \oplus Wt^{\w_{n,e-1}},\]
and fixed $i$, $1\leq i \leq e-1$ one has  $\w_{n+1
,i}=\w_{n,i}+\epsilon \cdot e$ where $\epsilon \in \{0,1\}$ and
$\w_{n+1 ,i}=\w_{n,i}+ e$ for $n\geq r$. That is,
$\{t^{\w_{n,0}},\dots ,t^{\w_{n,e-1}}\}_{n\geq 0}$ is a family of stacked bases for the
free $W$-modules $\fm^n \subset A$. The table
\medskip

\[
\begin{array}{|c|c|c|c|c|c|c|}\hline
 \Ap(S)& \w_{0,0} &\w_{0,1}&\cdots  &
 \w_{0,i} &\cdots & \w_{0,e-1} \\
\hline
\Ap(M)& \w_{1,0} &\w_{1,1}&\cdots & \w_{1,i} & \cdots & \w_{1,e-1}\\
\hline \vdots& \vdots & \vdots &\vdots & \vdots & \vdots & \vdots
\\ \hline
\Ap(nM)&\w_{n,0} &\w_{n,1}&\cdots & \w_{n,i} & \cdots &
\w_{n,e-1}\\
\hline
\vdots & \vdots & \vdots &\vdots & \vdots & \vdots & \vdots \\
\hline
\Ap(rM) & \w_{r,0} &\w_{r,1}& \cdots & \w_{r,i} & \cdots  & \w_{r,e-1}\\
\hline  \end{array} \]

\medskip
\noindent is defined as the Apéry table of $M$ \cite{CZ1}.
\medskip

Before showing how to read the structure of $G(S)$ as
$F(t^e)$-module in the Apéry table, we recall the following notation
introduced in \cite{CZ1}.

Let $E = \{w_0, \dots , w_m\}$ be a set of integers. We call it a
stair if $w_0 \leq \cdots \leq w_m$. Given a stair, we say that a
subset $L = \{ w_i, \dots, w_{i+k}\}$ with $k\geq 1$ is a landing of
length $k$ if $w_{i-1} < w_i = \cdots = w_{i+k} < w_{i+k+1}$ (where
$w_{-1} = -\infty$ and $w_{m+1} = \infty$). In this case, the index
$i$ is the beginning of the landing: $s(L)$ and the index $i+k$ is
the end of the landing: $e(L)$. A landing $L$ is said to be a true
landing if $s(L) \geq 1$. Given two landings $L$ and $L'$, we set $L
< L'$ if $s(L) < s(L')$. Let $l(E)+1$ be the number of landings and
assume that $L_0< \cdots <L_{l(E)}$ is the set of landings. Then, we
define following numbers:

$s_j(E)= s(L_j)$, $e_j(E) = e(L_j)$, for each $0\leq j \leq l(E)$;

$c_j(E) = s_j - e_{j-1}$, for each $1 \leq j \leq l(E)$.

$k_j(E) = e_j - s_{j}$, for each $1 \leq j \leq l(E)$.

With this notation, for any $1\leq i \leq e-1$, consider the stairs
$\Omega ^i=\{\w_{n,i}\}_{0\leq n\leq r}$, that is, the stairs
defined by the columns of the Apéry table of $M$,  and the following
integers:

\begin{itemize}
\item $l_i = l(\W^i)$ the number of true landings of the column
$\W^i$;

\item $d_i = e_{l_i}(\W^i)$ the end of the last landing;

\item $b_j^i=e_{j-1}(\W^i)$  and $c_j^i=c_j(\W^i)$, for $1\leq
j\leq l_i$.
\end{itemize}
Observe that
\[
\ord(\w_{0,i})=e_0(\W^i)=b_1^i
\]
 is the place where the first landing
ends and
\[d_i=b_1^i+(c^i_1+k^i_1)+\cdots +(c^i_{l_i}+k^i_{l_i})\]
(in fact, $b_1^i$ is the invariant $b_i$ defined in \cite{BF} in the more general
context of one-dimensional equicharacteristic analytically irreducible and
residually rational domains, see also \cite{BF1}, and \cite{CZ2} for
their interpretation in the case of one-dimensional Cohen-Macaulay
rings).

It was proven in \cite{CZ1} that the  torsion submodule of $G(S)$ is
minimally generated by
\[
\left\{
 (t^{\w_{0,i}})^{\ast}, (t^{\w_{0,i}+c_1^ie})^{\ast},\dots ,
(t^{\w_{0,i}+(c_1^i+\cdots +c_{l_i-1}^i)e})^{\ast} \right\}_{\{1\leq
i \leq e-1;\, l_i\geq 1\}}
\]
so  $\tord(\w_{0,i}+c_j^ie)=c^i_{j+1}$, and the free submodule of
$G(S)$ admits the basis
\[
\left\{ (t^{\w_{0,i}+(c_1^i+\cdots +c_{l_i}^i)e})^{\ast}
\right\}_{1\leq i \leq e-1}.
\]

In particular, we have

\[G(S)\cong \left( F(t^e)\oplus \bigoplus_{i=1}^{e-1}
F(t^e)(-d_i) \right)\oplus
 \left(\bigoplus_{i=1}^{e-1} \bigoplus_{j=1}^{l_i} \frac{F(t^e)}{((t^e)^{\ast})^{c_j^i}F(t^e)}
(-b_j^i)\right).\]

\begin{rem}
\label{rem4} Note that as a consequence of the above description,
any $s \in T$ must come out as one of the elements in the Apéry
table of $M$.
\end{rem}

\begin{ex}
\label{table s=<4,11,29>}
 Let $S=\langle 4, 11,29\rangle $ be the semigroup in
\ref{s=<4,11,29>} and set the Apéry table of $M$
\[
\begin{array}{|c|c|c|c|c|}\hline  \Ap(S)&0&29&22&11\\ \hline
\Ap(M)&4&29&22&11\\ \hline \Ap(2M)&8&33&22&15\\ \hline
 \Ap(3M)&12&33&26&19\\ \hline \end{array}
 \]
In this case
\[\begin{array}{rl}
G(S)=& F(t^4)  \oplus F(t^4)\cdot(t^{33})^{\ast}  \oplus
F(t^4)\cdot(t^{22})^{\ast}  \oplus
F(t^4)\cdot(t^{11})^{\ast} \oplus
F(t^4)\cdot(t^{29})^{\ast}\\
\cong & F(t^4)  \oplus F(t^4)(-3)  \oplus  F(t^4)(-2) \oplus
F(t^4)(-1)  \oplus F(t^4)/(t^4)^{\ast}F(-1)\end{array}
\]

and $\Hi(n)=\{1,3,3,4\rightarrow \}$.
\end{ex}

\begin{ex}
\label{s=<5,6,14>}
 The Apéry table for $S=\langle 5, 6,14\rangle $ is
\[
\begin{array}{|c|c|c|c|c|c|}\hline  \Ap(S)&0&6&12&18&14\\ \hline
\Ap(M)&5&6&12&18&14\\ \hline \Ap(2M)&10&11&12&18&19\\ \hline
 \Ap(3M)&15&16&17&18&24\\ \hline \Ap(4M)&20&21&22&23&24\\ \hline\end{array}
 \]
For this semigroup the tangent cone (we will put $F=F(t^5)$) is
\[\begin{array}{rl}
G(S)=& F  \oplus F\cdot(t^{6})^{\ast}  \oplus
F\cdot(t^{12})^{\ast}  \oplus F\cdot(t^{18})^{\ast} \oplus
F\cdot(t^{24})^{\ast}  \oplus F \cdot (t^{14})^{\ast}\\
\cong & F  \oplus F(-1)   \oplus  F(-2) \oplus F(-3) \oplus F(-4) \oplus
F/((t^5)^{\ast})^2F(-1)\end{array}
\]
and $\Hi(n)=\{1,3,4,4,5\rightarrow \}$.
\end{ex}

\begin{ex} The Apéry table for $S=\langle 7, 8,17\rangle $ is
\[
\begin{array}{|c|c|c|c|c|c|c|c|}\hline  \Ap(S)&0&8&16&17&25&33&34\\ \hline
\Ap(M)&7&8&16&17&25&33&34\\ \hline \Ap(2M)&14&15&16&24&25&33&34\\
\hline
\Ap(3M)&21&22&23&24&32&33&41\\
\hline
\Ap(4M)&28&29&30&31&32&40&41\\
\hline
\Ap(5M)&35&36&37&38&39&40&48\\
\hline
\Ap(6M)&42&43&44&45&46&47&48\\
\hline
\end{array}
 \]
 So, the tangent cone (we will put $F=F(t^7)$) is
 \[\begin{array}{rl}
G(S)=& F \oplus F\cdot(t^{8})^{\ast}  \oplus F\cdot(t^{16})^{\ast}
 \oplus F\cdot(t^{24})^{\ast}\oplus F\cdot(t^{32})^{\ast}\oplus
F\cdot(t^{40})^{\ast}\oplus F\cdot(t^{48})^{\ast}\\
\oplus & F\cdot(t^{17})^{\ast}\oplus F\cdot(t^{25})^{\ast}\oplus
F\cdot(t^{33})^{\ast}\oplus
F\cdot(t^{34})^{\ast}\oplus F\cdot(t^{41})^{\ast}  \\
\cong & F  \oplus F(-1)   \oplus  F(-2) \oplus
F(-3)\oplus F(-4)\oplus F(-5)\oplus F(-6)\\
 \oplus & F/(t^7)^{\ast}(-1)\oplus F/(t^7)^{\ast}(-2)
\oplus F/(t^7)^{\ast}(-3)\oplus F/(t^7)^{\ast}(-2)\oplus
F/(t^7)^{\ast}(-4)
\end{array}
\]
and the  Hilbert function is $\Hi(n)=\{1,3,5,5,6,6,7\rightarrow \}$.
\end{ex}

\medskip

For $k\leq r$, the differences $\Hi(k)-\Hi(k-1)$ can be read in the
table as
\[
\# \{y\mid \ord(y)=k \textrm{ and } y \textrm{ is the end of a
landing } \}
\]
\[
- \# \{x \mid \ord(x)=k-1 \textrm{ and } \ord(x+n_1)>k \}
\]

As noted before, the ring  $G(S)$ is a Cohen-Macaulay ring if and
only if $G(S)$ is free as $F(t^e)$-module or equivalently, $T=0$.
The translation of this condition in the Apéry table of $M$ is that
there are no true landings in their columns (that is $l_i=0$ and
$d_i=b_i$ for $1\leq i \leq e-1$).

\begin{rem}\label{tord} Observe that for any $\w \in \Ap(S)$ we have that $\tord(w)< r-1$. Thus the ring $G(S)$ is Cohen-Macaulay  if and only if for any element $\w \in \Ap(S)$ one has
\[ \ord (\w + ce) = \ord (\w) +c \]
for all $ 0< c < r-1$. See also García \cite[Theorem 7,
Remark 8]{G}. As a consequence, $G(S)$ is Cohen-Macaulay for any
numerical semigroup $S$ with reduction number at most two.
\end{rem}

If $G(S)$ is a Cohen-Macaulay ring, then we can rewrite the
structure of $G(S)$ as $F(t^e)$-module in the form
\[G(S)\cong F(t^e)\oplus \bigoplus_{k=1}^{r}
F(t^e)(-k)^{\gamma_k}
 \]
with $\gamma_k:=\# \{i;\, b_i=k\}$. Moreover, if we order the Apéry
set in the form
\[ \Ap(S)=\{ \w_0=0, \w^1_1,\dots ,\w^1_{\gamma_1}, \dots,
\w^r_1,\dots ,\w^r_{\gamma_r}\} \] with $\ord(\w^k_i)=k$ the Apéry
table has the form

\[
\begin{array}{|c|c|c|c|c|c|c|c|c|}\hline
 \Ap(S)& 0 &\w^1_1&\cdots  &
 \w^1_{\gamma_1} &\cdots & \w^r_1& \cdots & \w^r_{\gamma_r} \\
\hline
\Ap(M)& e &\w^1_1&\cdots  &\w^1_{\gamma_1} &\cdots & \w^r_1& \cdots & \w^r_{\gamma_r} \\
 \hline
\Ap(2M)& 2e &\w^1_1+e&\cdots  &\w^1_{\gamma_1}+e &\cdots & \w^r_1& \cdots & \w^r_{\gamma_r} \\
 \hline
  \vdots& \vdots & \vdots &\vdots & \vdots
& \vdots & \vdots &\vdots &\vdots
\\ \hline
\Ap(kM)& ke & \w^1_1+(k-1)e& \cdots& \w^1_{\gamma_1}+(k-1)e &\cdots & \w^r_1& \cdots & \w^r_{\gamma_r} \\
 \hline
\vdots & \vdots & \vdots &\vdots & \vdots & \vdots & \vdots &\vdots &\vdots\\
\hline \Ap(rM)& re & \w^1_1+(r-1)e&\cdots &\w^1_{\gamma_1}+(r-1)e &\cdots & \w^r_1& \cdots & \w^r_{\gamma_r} \\
\hline  \end{array} \] and it is clear that for $k\leq r$
\[\mu(\fm^k)=\mu(\fm^{k-1})+\gamma_k \] and so, the Hilbert
function is non-decreasing since
\[H_S(n)=\{1,1+\gamma_1=b, 1+\gamma_1+\gamma_2,\cdots ,
 1+\gamma_1+\cdots +\gamma_r=e \rightarrow \}.\]

\begin{rem}\label{HiCM}
The numbers $\gamma_k$ were studied in \cite{CZ2} in a more general
context. Concretely
\[ \gamma_k=\la \left( \fm^k/(\fm^{k+1}+\fm^k\cap xA)\right)\]
and if $G(S)$ is a Cohen-Macaulay ring then
\[\gamma_k=\mu(\fm^k)- \mu(\fm^{k-1})=\la \left( \fm^k/(\fm^{k+1}+ x\fm^{k-1})\right);\]
so, in this case $\gamma_k>0$ for $k\leq r$ and the Hilbert function
\[ 1<\mu(\fm)<\mu(\fm^2)< \cdots <\mu(\fm^r)=\mu(\fm^n) \]
is strictly increasing until $r$ and stabilizes for $n\geq r$.
\end{rem}

\begin{rem}
\label{red}
Observe that if $G(S)$ is Cohen-Macaulay then the reduction number of $A$ equals to the highest order among the elements in $\Ap(S)$.
\end{rem}

The Gorenstein property of the tangent cone can also be detected in
terms of the Apéry table. According to L. Bryant \cite[Corollary
3.20]{B}, $G(S)$ is Gorenstein if and only if $G(S)$ is
Cohen-Macaulay and $S$ is symmetric and $M$-pure. Let us write the
Apéry set of $S$ in the form $\Ap(S) =\{ 0 = \w_0 < \w_1 < \cdots <
\w_{e-1}\}$ and assume $S$ is symmetric. Then, by \cite[Proposition
3.7]{B} $S$ is $M$-pure if and only if $\ord(\w_i) +
\ord(\w_{e-i-1}) = \ord(\w_{e-1})$ for all $i = 0, \dots, e-1$. In
terms of the Apéry table this is equivalent to the condition $b_i +
b_{e-i-1} = b_{e-1}$ for all $i = 0, \dots, e-1$, which is a kind of symmetry for the ends of the first landings in the Apéry table.

\medskip

  Next we show how to apply the above techniques to systematically study the already known case of embedding dimension two, that is, plane monomial curves. Let $S=\langle n_1,n_2 \rangle $ be the numerical semigroup minimally generated by $\{n_1,n_2\}$. We have multiplicity $e=n_1$ and the embedding dimension is  $b=2$.
\medskip

The Apéry table of $M$ is completely determined. In fact, after
reordering the elements in $\Ap(S)$ in increasing form, we have
\[
\begin{array}{|c|c|c|c|c|c|c|}\hline
 S& 0 &n_2&\cdots  & kn_2 &  \cdots & (n_1-1)n_2 \\
\hline
M& n_1 &n_2&\cdots  & kn_2 &  \cdots & (n_1-1)n_2 \\
\hline
 2M& 2n_1 &n_2+n_1&\cdots  & kn_2 &  \cdots & (n_1-1)n_2 \\
\hline
  \vdots& \vdots & \vdots &\vdots & \vdots
& \vdots & \vdots \\
 \hline
 kM& kn_1 &n_2+(k-1)n_1&
 \cdots &kn_2 &  \cdots & (n_1-1)n_2 \\
 \hline
(k+1)M& (k+1)n_1 &n_2+kn_1&
 \cdots &kn_2 +n_1&  \cdots & (n_1-1)n_2 \\
 \hline
\vdots & \vdots & \vdots &\vdots & \vdots & \vdots & \vdots  \\
\hline (n_1-1)M& (n_1-1)n_1 &n_2+(n_1-2)n_1&
 \cdots &kn_2 +(n_1-k-1)n_1&  \cdots & (n_1-1)n_2 \\ \hline   \end{array} \]
\medskip

Observe that $n_1M=n_1+(n_1-1)M$ and so the reduction number is
$r=n_1-1$ (the Apéry table is an square box).

Considering now $\Ap(S)$ and $\Ap_{n_1}(S')$ for $S'=\langle
n_1,n_2-n_1 \rangle$ the semigroup obtained by blowing up $\fm$ at
$S$; that is
\[
\begin{array}{|c|c|c|c|c|c|}\hline  S'&0&n_2-n_1&2(n_2-n_1)& \cdots &(n_1-1)(n_2-n_1)\\ \hline
S&0&n_2&2n_2& \cdots &(n_1-1)n_2\\ \hline
\end{array}
 \]
we obtain that the N-reduction number is in this case  $\rho
=1+2+\cdots +(n_1-1)=\frac{n_1(n_1-1)}{2}$.

As $(n_1-1)n_2$ is the greatest element in $\Ap(S)$, the conductor
is in this case $\mathfrak{c}=(n_1-1)n_2-n_1+1=n_1n_2-n_1-n_2+1$.
\medskip

In order to calculate the degree of singularity of $k[[S]]$ we
consider the table of $\Ap(S)$ and $\Ap_{n_1}(\mathbb N)$ or
equivalently the formula $\delta= \frac{1}{n} \left( \sum_{\w \in
\Ap_n(S)}\w \right) -\frac{n-1}{2}=\frac{n_1n_2-n_1-n_2+1}{2}$.
Thus, since $\mathfrak{c}=2\delta$ the ring $k[[S]]$ is Gorenstein. In fact
$S$ is clearly symmetric by just checking the Apéry set of $S$.

The Apéry table of $M$ shows that the tangent cone $G(S)$ is a free
$F(t^{n_1})$-module and so a Cohen-Macaulay ring; we can read in the
Apéry table that
\[
\begin{array}{rl}
G(S)&=F(t^{n_1})\oplus F(t^{n_1})\cdot (t^{n_2})^{\ast}\oplus
F(t^{n_1})\cdot (t^{2n_2})^{\ast}\oplus \cdots \oplus
F(t^{n_1})\cdot (t^{(n_1-1)n_2})^{\ast} \\ &\cong F(t^{n_1})\oplus
F(t^{n_1})(-1)\oplus F(t^{n_1})(-2)\oplus \cdots \oplus
F(t^{n_1})\cdot (-(n_1-1))
\end{array}
\]

Also from the Apéry table it is clear that $S$ is $M$-pure. So we
restate the following fact.

\begin{cor}\label{2}
If the embedding dimension of $S$ is 2, then $G(S)$ is Gorenstein.
\end{cor}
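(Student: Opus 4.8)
The plan is to invoke Bryant's criterion recalled above, namely that $G(S)$ is Gorenstein if and only if $G(S)$ is Cohen--Macaulay, $S$ is symmetric, and $S$ is $M$--pure. All three ingredients have in fact already been read off the explicit Ap\'ery table of $M$ displayed for $S=\langle n_1,n_2\rangle$, so the argument reduces to assembling them and then quoting \cite{B}. I would therefore present the corollary as a direct consequence of the analysis preceding it, rather than reproving anything from scratch.

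First I would confirm the Cohen--Macaulay property. Inspecting the columns $\W^i$ of the table, the $i$-th column (the one headed by $in_2$) is constant equal to $in_2$ from the top row down through the row $\Ap(iM)$, and then strictly increases by $n_1$ at each subsequent step. Hence each column has a single landing, sitting at the very top with beginning index $0$, and consequently \emph{no} true landing; that is, $l_i=0$ for every $i$. By the translation of Cohen--Macaulayness into the Ap\'ery table given above, this means $T=0$, so $G(S)$ is free over $F(t^{n_1})$ and thus Cohen--Macaulay. Next I would verify symmetry and $M$--purity straight from $\Ap(S)=\{0<n_2<2n_2<\cdots<(n_1-1)n_2\}$. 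Writing $a_i=in_2$ with $n=e=n_1$, the symmetry condition $a_i+a_{n_1-i-1}=a_{n_1-1}$ collapses to the identity $in_2+(n_1-i-1)n_2=(n_1-1)n_2$, valid for all $i$. For $M$--purity I would note that the $i$-th Ap\'ery element $\w_{0,i}=in_2$ has order exactly $i$ (its maximal expression being $i$ copies of $n_2$), so $b_i=\ord(in_2)=i$ for $0\le i\le n_1-1$; the purity condition $b_i+b_{e-i-1}=b_{e-1}$ then reads $i+(n_1-i-1)=n_1-1$, again an identity. Combining the three facts with Bryant's criterion gives the claim.

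The only step that warrants genuine care is the order computation underpinning both the order-$i$ labelling and the $M$--purity check: one must be certain that $\ord(in_2)=i$ rather than something smaller, which is exactly where the minimality of the generators $\{n_1,n_2\}$ and the bound $i\le n_1-1=r$ are used. Beyond this, there is no real obstacle, because the rigid square-box shape of the Ap\'ery table of $M$ forces Cohen--Macaulayness, symmetry, and $M$--purity simultaneously, and the corollary is essentially a bookkeeping consequence of the general structure theory developed above.
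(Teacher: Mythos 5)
Your proposal is correct and takes essentially the same route as the paper: the paper likewise reads the Cohen--Macaulayness (freeness over $F(t^{n_1})$, no true landings), the symmetry of $S$, and the $M$-purity directly off the explicit Ap\'ery table for $S=\langle n_1,n_2\rangle$, and then concludes via Bryant's criterion. Your explicit verification that $\ord(in_2)=i$ is a worthwhile detail the paper leaves implicit, though what it really rests on is $\gcd(n_1,n_2)=1$ together with $i\le n_1-1$, rather than minimality of the generating set per se.
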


We finish this section by considering numerical semigroups with a
unique Betti element. Recently, García Sánchez-Ojeda-Rosales \cite{GOR} have studied affine semigroups having a
unique Betti element. In particular, they have characterized this
property for numerical semigroups in the following way \cite[Example
12]{GOR}: let $S$ be a numerical semigroup minimally generated by
$\{n_1, \dots , n_b\}$. Then, $S$ has a unique Betti element if and
only if there exist $k_1 > \cdots > k_b$ pairwise relatively prime
integers greater than one such that $n_i=\prod_{j\neq i}k_j$.

\begin{prop}
Let $S$ be a numerical semigroup with a unique Betti element. Then:

\begin{itemize}

\item[(1)] $\Ap(S) = \{\lambda_2n_2 + \cdots + \lambda_bn_b \mid 0 \leq \lambda_i \leq k_i-1\}$.

\item[(2)] $S$ is $M$-pure symmetric.

\item[(3)] $G(S)$ is Gorenstein.

\item[(4)] The reduction number $r$ is equal to $\sum _{i=2}^b(k_i-1)$.

\end{itemize}

\end{prop}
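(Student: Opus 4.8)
The plan is to base everything on the single identity driving this family: since $n_i=\prod_{j\neq i}k_j$, we have $k_in_i=\prod_{j=1}^{b}k_j=:K$ for every $i$, so that $k_1n_1=k_2n_2=\cdots=k_bn_b$ and in particular $n_1=\prod_{j=2}^{b}k_j$. Every exchange of $k_l$ copies of $n_l$ for $k_1$ copies of $n_1$ preserves the value of an element while changing its coefficient sum by $k_1-k_l$.

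For (1), set $P=\{\sum_{i=2}^{b}\lambda_in_i \mid 0\leq\lambda_i\leq k_i-1\}$. First I would show the tuples give pairwise distinct residues modulo $n_1$: reducing a congruence $\sum\lambda_in_i\equiv\sum\mu_in_i \pmod{n_1}$ modulo a fixed $k_l$ ($l\geq2$), and using that $k_l\mid n_1$ and $k_l\mid n_i$ for $i\neq l$ while $\gcd(k_l,n_l)=1$, collapses it to $\lambda_ln_l\equiv\mu_ln_l\pmod{k_l}$, hence $\lambda_l=\mu_l$. Thus $\#P=\prod_{i\geq2}k_i=n_1=\#\Ap(S)$, and it suffices to prove $\Ap(S)\subseteq P$. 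Given $\w\in\Ap(S)$, take a maximal expression $\w=\sum_{i\geq2}r_in_i$ not involving $n_1$ (such exists, as recalled in Section 2); if some $r_l\geq k_l$, then writing $\w=(r_l-k_l)n_l+\sum_{i\neq l}r_in_i+k_1n_1$ shows $\w-n_1\in S$, contradicting $\w\in\Ap(S)$. Hence all $r_l\leq k_l-1$, so $\w\in P$, and the two $n_1$-element sets coincide.

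The technical heart is the order function, obtained by the same exchange argument. For $\w=\sum_{i\geq2}\lambda_in_i\in\Ap(S)$ with coefficients in range I claim $cn_1+\sum\lambda_in_i$ is a maximal expression, so $\ord(\w+cn_1)=c+\sum_{i\geq2}\lambda_i$ for every $c\geq0$. Indeed, in any maximal expression $\sum_{i=1}^{b}r_in_i$ of $\w+cn_1$ one must have $r_l\leq k_l-1$ for $l\geq2$, since otherwise swapping $k_ln_l$ for $k_1n_1$ would raise $\sum r_i$ by $k_1-k_l>0$; then reducing modulo $n_1$ and invoking the uniqueness of range-representatives from the previous paragraph forces $r_l=\lambda_l$ and $r_1=c$. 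Consequently $\ord(\w+cn_1)=\ord(\w)+c$ for all $\w\in\Ap(S)$ and all $c$, so the Ap\'ery-table criterion of Remark~\ref{tord} gives that $G(S)$ is Cohen--Macaulay, and $\ord(\w)=\sum_{i\geq2}\lambda_i$ on $P$.

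The rest is then quick. For (2), the map $\sigma\colon\sum\lambda_in_i\mapsto\sum(k_i-1-\lambda_i)n_i$ is, by (1), an order-reversing involution of $\Ap(S)$ onto itself fixing the top element $W=\sum_{i\geq2}(k_i-1)n_i$; writing $\Ap(S)=\{0=a_0<\cdots<a_{n_1-1}\}$, an order-reversing involution must act as $a_j\mapsto a_{n_1-1-j}$, whence $a_j+a_{n_1-1-j}=W=a_{n_1-1}$ and $S$ is symmetric. Purity follows since $\ord(\w)+\ord(\sigma(\w))=\sum\lambda_i+\sum(k_i-1-\lambda_i)=\sum_{i\geq2}(k_i-1)=\ord(W)$, which is exactly the condition on the ends of the first landings; this proves (2). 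Then (3) is immediate from Bryant's criterion \cite{B}, as $G(S)$ is Cohen--Macaulay and $S$ is $M$-pure symmetric, and (4) follows from Remark~\ref{red}: the reduction number equals the largest order in $\Ap(S)$, namely $\ord(W)=\sum_{i=2}^{b}(k_i-1)$. The main obstacle is the middle paragraph: the interplay between the coprimality-based uniqueness of representatives modulo $n_1$ and the coefficient-sum-increasing exchange $k_ln_l\rightsquigarrow k_1n_1$, which together pin down $\Ap(S)$, its order function, and Cohen--Macaulayness in one stroke.
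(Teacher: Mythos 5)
Your proof is correct, and it takes a genuinely different route from the paper's in its core parts. Your unifying tool is the single identity $k_1n_1=k_2n_2=\cdots=k_bn_b$ combined with the coprimality-based uniqueness of in-range representatives modulo $n_1$: this gives (1) by a counting argument ($\#P=n_1=\#\Ap(S)$ plus the exchange $k_ln_l\rightsquigarrow k_1n_1$), and then pins down the \emph{exact} order function, $\ord(\w+cn_1)=c+\sum_{i\geq 2}\lambda_i$ for every $\w\in\Ap(S)$ and every $c\geq 0$, so that Cohen--Macaulayness falls out of the criterion in Remark~\ref{tord} with no torsion analysis at all. The paper instead proves (1) by a divisibility contradiction at the top element $W=\sum_{i\geq2}(k_i-1)n_i$, gets symmetry from the unique-maximal-element characterization rather than from your explicit order-reversing involution $\w\mapsto W-\w$, and proves Cohen--Macaulayness by contradiction: it takes a minimal torsion element $w\in\Ap(S)\cap T$, invokes Lemma~\ref{subtorsion} (a forward reference to Section 4) to force $r_is_i=0$ in the two maximal expressions, and concludes $k_i\mid w$ for all $i\geq 2$, hence $n_1\mid w$, contradicting $w\in\Ap(S)$. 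What your approach buys is a complete, constructive description of the Ap\'ery table (every column climbs by exactly $n_1$ per row), from which (2), (3) via Bryant's criterion, and (4) via Remark~\ref{red} are direct verifications, and which moreover exhibits the explicit free $F(t^{n_1})$-module structure of $G(S)$; what the paper's route buys is brevity in each individual item and a showcase of the torsion machinery (minimal torsion elements, $\tord$) developed for the rest of the article.
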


\begin{proof}

(1) First we show that $sn_i \in \Ap(S)$ if and only if $0 \leq s \leq k_i - 1$. In fact, if $s\geq k_i$ then $sn_i = k_in_i + (s-k_i)n_i = k_1n_1 + (s-k_i)n_i$ and so $sn_i\notin Ap(S)$. Conversely, let $l = \min \{s \mid sn_i\notin \Ap(S)\}$. Then, $ln_i = \sum_{i=1}^b r_in_i$ with $r_1 \neq 0$ and $r_i=0$. Since $r_i=0$, then $k_i|l$ and so $k_i \leq l$.

Now, since any subrepresentation of a representation of an element in $Ap(S)$ also belongs to $\Ap(S)$, it suffices to proof that the element $(k_2-1)n_2+ \cdots + (k_b-1)n_b \in \Ap(S)$. Assume the contrary and let $(k_2-1)n_2+ \cdots + (k_b-1)n_b = \sum _{i=1}^b s_in_i$, with $s_1\neq 0$ and $\sum _{i=2}^b s_in_i \in \Ap(S)$. Then, by the previous considerations, $0\leq s_i \leq k_i-1$ for $2 \leq i \leq b$. Thus we may write $s_1n_1 = \sum _{i=2}^b (k_i-1 -s_i)n_i$. Let $2\leq j \leq b$ be such that $k_j - 1 - s_j \neq 0$. Then $k_j|(k_j-1-s_j)$ because $k_j|n_i$ for all $i\neq j$, but this is impossible.

(2) From (1) we have that there is exactly one maximal element in the Apéry set, so we get that $S$ is symmetric. Hence to prove that $S$ is $M$-pure it is enough to show that $\sum_{i=2}^b(k_i-1)n_i$ is a maximal expression. Assume the contrary and let $\sum_{i=2}^b(k_i-1)n_i = \sum_{i=2}^bs_in_i$ with $\sum_{i=2}^bs_i >
\sum_{i=2}^b(k_i-1)$. Then, there exists $2\leq j \leq b$ such that $s_j > k_j-1$. Thus $s_jn_j \notin \Ap(S)$ as we have seen in (1), which is a contradiction.

(3) First we show that $G(S)$ is Cohen-Macauly. Assume that $G(S)$ is not Cohen-Macaulay and let $w\in \Ap(S)\cap
T\setminus (0)$ be the smallest possible torsion element in
$\Ap(S)$. Note that then $w$ is also the smallest possible element
in $T\setminus (0)$. In fact, let $x \in T\setminus (0)$ and $w'\in
Ap(S)$ such that $x = w' + kn_1, k\geq 0$. Then, $w'$ is also
torsion and so $w \leq w' \leq x$.

Assume that $\tord(w) = c$ and let $w=\sum^b_{i=2}r_in_i$ a maximal
expression, and $w+cn_1=\sum^b_{i=2}s_in_i$ a maximal expression
too (by Remark \ref{rem5} we know that $s_1 = 0$). Note that $s_ir_i=0$ for all $i=2,\ldots,b$, since $w$ is a
minimal element of $T$ (see Lemma \ref{subtorsion}). Let
$i\in\{2,\ldots,b\}$. If $s_i\neq 0$, then $r_i=0$ and so $k_i\mid
w$. If $s_i=0$, then $k_i\mid \sum^b_{i=2}s_in_i=w+cn_1$.
Since $k_i\mid n_1$ we have that $k_i\mid w$. Hence
$\prod_{i=2}^bk_i\mid w$, that is $n_1\mid w$ which contradicts
$w\in\Ap(S)$.

Finally, we get that $G(S)$ is Gorenstein because it is Cohen-Macaulay and $S$ is $M$-pure symmetric by (2).

(4) By Remark \ref{red} we have that $r$ is equal to the highest order among the elements in the Apéry set, which in this case is $\sum _{i=2}^b(k_i-1)$.
\end{proof}

\begin{rem}
By completely different methods it is proven in \cite{GOR} that $S$ is in fact complete intersection. Also, following the notation in \cite{DMS1}, we have that $\Ap(S)$ is $\beta$-rectangular and then $G(S)$ is complete intersection by \cite[Corollary 2.10 and Theorem 3.6]{DMS1}.
\end{rem}

\section{$k$--Buschsbaum property in the three-generated case}


Recall that $G(S)$ is called $k$--Buchsbaum if  and only if
$G(S)_+^k \cdot H^0_{G(S)_+}(G(S))=0$. In our case
$H^0_{G(S)_+}(G(S))$ coincides with $T(G(S))$, the torsion
$F(t^e)$--submodule of $G(S)$. The $1$--Buchsbaum condition is the
Buchsbaum condition and $G(S)$ is  $0$--Buchsbaum precisely when it
is Cohen-Macaulay.

\medskip

Observe that because in this case $H^0_{G(S)_+}(G(S))$ is of finite
length, $G(S)$ will be always $k$-Buchsbaum for some $k\leq \lambda
(H^0_{G(S)_+}(G(S)))$. Thus in some sense, the study of the
$k$--Buchsbaum property in this case is a sort of classification of
the family of numerical semigroups, being the Cohen-Macaulay case
the best possible from this point of view.

\medskip

In this section we use the Apéry table to provide the precise
structure of $G(S)$ when $S$ is three-generated and $G(S)$ is
Buchsbaum or $2$-Buchsbaum. As a biproduct we will have positive
answers to conjectures raised by Sapko in \cite{S} for the
Buchsbaum case and by Shen in \cite{Sh} for the $2$-Buchsbaum
case. Both conjectures have been solved positively by using
different techniques by D'Anna-Micale-Sammartano in
\cite{DMS}, and Shen \cite{Sh} for the Buchsbaum case, and by Shen
\cite{Sh} for the $2$-Buchsbaum case.



\begin{rem}\label{tordB}
If $G(S)$ is $k$--Buchsbaum and $s\in T$ then the order of torsion
of $s$ is at most $k$ since otherwise $\ord(s+ie)=\ord(s)+i$ for
$1\leq i \leq k$ and $(t^{ke})^{\ast}\cdot (t^s)^{\ast}\neq 0$.
\end{rem}

In studying the $k$--Buchsbaum property the differences between the
structure of $G(S)$ as a graded ring or as a graded module over
$F(t^e)$ appear. For instance, we may have that $F(t^e)_+ \cdot
H^0_{G(S)_+}(G(S))=0$ (equivalently, the order of torsion of any
element in the torsion $F(t^e)$-submodule of $G(S)$ is one) but
$G(S)$ may be not Buchsbaum, see for instance \cite[Example
4.6]{CZ1}.

\medskip

The following lemma which holds for any numerical semigroup $S$ has
a  key role in the proof of our main results.

\medskip

\begin{lem}\label{general}
Let $s=\sum_{i=1}^b r_in_i$ be a maximal expression of $s$. If $s\in
T$, then $\sum_{i=3}^br_i\neq 0$.
\end{lem}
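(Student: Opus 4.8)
The plan is to prove the contrapositive: if a maximal expression of $s$ involves only $n_1$ and $n_2$, then $s\notin T$. So I would suppose $s=r_1n_1+r_2n_2$ is a maximal expression with $\sum_{i=3}^br_i=0$, and assume toward a contradiction that $s\in T$. If $r_2=0$ then $s=r_1n_1$ is a multiple of $n_1$, which is never in $T$ by Remark \ref{rem4}; so I may reduce to the case $r_2\geq 1$.

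The key idea is to exploit that $s\in T$ produces a strict jump of the order when copies of $n_1$ are added, and to read this jump as the vanishing of a product of initial forms in $G(S)$. Since $s\in T$, I would choose $c>0$ with $\ord(s+cn_1)>\ord(s)+c=r_1+r_2+c$, and split $s+cn_1=A+B$ with $A=(r_1+c)n_1$ and $B=r_2n_2$. Here $\ord(A)=r_1+c$ because $A$ is a multiple of $n_1$ (any representation of $kn_1$ has at most $k$ summands, as each $n_i\geq n_1$), and $\ord(B)=r_2$ because $r_2n_2$ is a subexpression of the maximal expression $r_1n_1+r_2n_2$ of $s$, hence itself maximal by Remark \ref{rem1}. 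Consequently $\ord(A+B)>\ord(A)+\ord(B)$, so the representation $A+B$ of $s+cn_1$ is not maximal; by the criterion recorded just after (\ref{order}) this forces $(t^{A})^{\ast}(t^{B})^{\ast}=0$ in $G(S)$.

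To close, I would observe that $(t^{A})^{\ast}=((t^{n_1})^{\ast})^{r_1+c}=((t^e)^{\ast})^{r_1+c}$, while $(t^{B})^{\ast}=(t^{r_2n_2})^{\ast}\neq 0$. Thus the relation $((t^e)^{\ast})^{r_1+c}(t^{r_2n_2})^{\ast}=0$ exhibits $(t^{r_2n_2})^{\ast}$ as annihilated by a power of $(t^e)^{\ast}$, hence as a torsion element of $G(S)$. By the dictionary between torsion of $G(S)$ and membership in $T$, this means $r_2n_2\in T$, contradicting Remark \ref{rem4}, which states that multiples of $n_2$ are never in $T$. This contradiction yields $s\notin T$ and proves the lemma.

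I expect the only delicate points to be the two order computations $\ord((r_1+c)n_1)=r_1+c$ and $\ord(r_2n_2)=r_2$; the first is elementary, and the second is exactly where the hypothesis that $r_1n_1+r_2n_2$ be a \emph{maximal} expression is used, through Remark \ref{rem1}. The conceptual core, namely turning the defining inequality of $T$ into a zero product and then back into torsion of the pure $n_2$-power, is what lets me avoid any case analysis; in particular this argument never needs $c=\tord(s)$ nor an induction on the size of $s$. A more hands-on alternative would take a shortest counterexample and peel off copies of $n_2$ to build a smaller one, but that route must separately dispose of the situation in which a maximal expression of $s+cn_1$ uses neither $n_1$ nor $n_2$, which is precisely the case the product argument settles for free.
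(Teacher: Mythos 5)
Your proof is correct, but it takes a genuinely different route from the paper's. The paper sets $c=\tord(s)$, invokes Remark \ref{rem5} to write $s+cn_1=\sum_{i\geq 2}s_in_i$ with no $n_1$-component, and then derives a purely numerical contradiction from the size ordering $n_1<n_2\leq n_i$:
\[
s+cn_1=(r_1+c)n_1+r_2n_2<(r_1+r_2+c)n_2<\Bigl(\sum_{i\geq 2}s_i\Bigr)n_2\leq \sum_{i\geq 2}s_in_i=s+cn_1.
\]
You instead transfer the order jump from $s$ to its pure $n_2$-part: since $\ord\bigl((r_1+c)n_1\bigr)=r_1+c$ and $\ord(r_2n_2)=r_2$ (the latter via Remark \ref{rem1}, which is the only place where maximality of the given expression enters), the jump $\ord(s+cn_1)>r_1+r_2+c$ says exactly that $r_2n_2$ has an order jump with witness $c'=r_1+c$, contradicting the remark that multiples of $n_2$ are never torsion. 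Two observations on this. First, your detour through $G(S)$ is unnecessary: once you have the two order computations, the inequality $\ord\bigl(r_2n_2+(r_1+c)n_1\bigr)>\ord(r_2n_2)+(r_1+c)$ is literally the definition of $r_2n_2\in T$; the zero product of initial forms and the torsion dictionary only restate this, so the ``conceptual core'' can be compressed to one line. Second, your argument shifts the arithmetic burden onto the unproven remark that $kn_2\notin T$, which the paper merely declares ``very easy to see''; that claim does admit an elementary proof (a size comparison of exactly the same flavor as the paper's display above), and it precedes the lemma, so there is no circularity --- but if one insists on self-containedness, supplying that proof makes the two arguments comparable in length. What your route buys is conceptual economy and a clean reduction of the general statement to the special case of pure $n_2$-multiples: you never need the precise torsion order $\tord(s)$ nor Remark \ref{rem5}, whereas the paper's proof needs both but is fully self-contained.
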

\begin{proof}
Let $c=\tord(s)$ and $s+c n_1=\sum_{i=1}^bs_in_i$ a maximal
expression of $s+c n_1$. Then
\[\sum_{i=1}^bs_i>\sum_{i=1}^br_i +c.\]
with $s_1= 0$ by Remark \ref{rem5}. Hence it holds that $\sum_{i=3}^br_i\neq 0$: otherwise
$s=r_1n_1+r_2n_2$ and then
\[ \begin{array}{rl} s+c n_1=r_1n_1+r_2n_2+c n_1 &=(r_1+c )n_1
+r_2n_2\\
& <(r_1+r_2+c )n_2 \\ & <(\sum_{i=2}^bs_i)n_2
\\ &<\sum_{i=2}^bs_in_i=s+c n_1, \end{array}
\]
a contradiction.
\end{proof}

In the three-generated case we can make more precise estimations:

\begin{lem}
\label{rs} Let $b=3$ and $S$ minimally generated by $n_1 < n_2 < n_3$. Let
$s=r_1n_1+r_2n_2+r_3n_3$ be a maximal expression of $s$. Assume that
$s\in T$ and let $c=\tord(s)$ with $s+c n_1=s_1n_1+s_2n_2+s_3n_3$ a
maximal expression of $s+c n_1$. Then
\begin{enumerate}
\item $s_1=0$. \item $r_3\neq 0$. \item $s_2 > r_2$.
\end{enumerate}
\end{lem}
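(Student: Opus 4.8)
The plan is to dispose of (1) and (2) by citing earlier results and then spend all the work on (3). Indeed, statement (1) is \emph{verbatim} Remark~\ref{rem5}, and statement (2) is just the case $b=3$ of Lemma~\ref{general}, where $\sum_{i=3}^{b}r_i=r_3$. So I would take $s_1=0$ and $r_3\neq 0$ as given and prove (3) by contradiction.

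First I would translate membership in $T$ into one numerical inequality. Using $s_1=0$ from (1), the two maximal expressions give $\ord(s)=r_1+r_2+r_3$ and $\ord(s+cn_1)=s_2+s_3$, and the definition of $\tord$ forces
\[ s_2+s_3 > r_1+r_2+r_3+c. \]
Now suppose, for contradiction, that $s_2\le r_2$. Substituting this into the displayed inequality gives $s_3 > r_1+r_3+c \ge r_3$, so in particular $s_3-r_3\ge 0$; this nonnegativity is the only place the assumption $s_2\le r_2$ is actually used.

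The key move is to strip off the entire $n_3$-part of $s$. I would set $t = s - r_3n_3 = r_1n_1+r_2n_2$. By Remark~\ref{rem1} this subrepresentation of the maximal expression of $s$ is itself maximal, so $\ord(t)=r_1+r_2$. Likewise $t+cn_1 = s_2n_2+(s_3-r_3)n_3$ is a subrepresentation of the maximal expression of $s+cn_1$, legitimate precisely because $s_3-r_3\ge 0$, hence again maximal by Remark~\ref{rem1}, so $\ord(t+cn_1)=s_2+s_3-r_3$. Comparing the two, $\ord(t+cn_1)-\bigl(\ord(t)+c\bigr)=(s_2+s_3)-(r_1+r_2+r_3+c)>0$ by the displayed inequality, so $t\in T$. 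But $t=r_1n_1+r_2n_2$ is a maximal expression whose $n_3$-coefficient vanishes, contradicting Lemma~\ref{general} applied to $t$. Hence $s_2>r_2$.

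I expect the only delicate point to be the bookkeeping that keeps every intermediate expression simultaneously maximal and nonnegative. Remark~\ref{rem1} supplies maximality for free once coefficientwise domination is checked, and the inequality $s_3\ge r_3$ extracted from the assumption is exactly what certifies that $t+cn_1$ remains a genuine semigroup element. Beyond this there is no real obstacle: the torsion-witness inequality transfers from $s$ to $t$ automatically, since subtracting $r_3n_3$ decreases both $\ord(t)$ and $\ord(t+cn_1)$ by the same amount $r_3$, leaving the strict gap intact.
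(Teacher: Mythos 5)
Your proof is correct and takes essentially the same route as the paper's: assume $s_2\leq r_2$, cancel common terms between the two maximal expressions to manufacture a torsion element whose maximal expression has no $n_3$-component, and contradict Lemma \ref{general}. The only difference is bookkeeping --- the paper cancels both $s_2n_2$ and $r_3n_3$, obtaining $x=r_1n_1+(r_2-s_2)n_2$ with $x+cn_1=(s_3-r_3)n_3$, whereas you cancel only $r_3n_3$ and keep $t=r_1n_1+r_2n_2$ with $t+cn_1=s_2n_2+(s_3-r_3)n_3$; both cancellations are legitimized by Remark \ref{rem1} and the same inequality, so the variation is immaterial.
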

\begin{proof}
Items $(1)$ and $(2)$ are proved in Lemma \ref{general}.

If $s_2 \leq r_2$ then $r_1n_1+(r_2-s_2)n_2+r_3n_3 + cn_1=s_3n_3$ with
$r_3<r_1+(r_2-s_2)+r_3+c<s_3$. Consider the element $x=r_1n_1+(r_2-s_2)n_2$.
Then $\ord(x)=r_1+(r_2-s_2)$ and $x+c n_1=(s_3-r_3)n_3$ with $s_3-r_3 >
r_1+(r_2-s_2)+c$. Hence $r_1n_1+(r_2-s_2)n_2\in T$, which contradicts condition
$(2)$ for $x$.
\end{proof}

From now on we will assume that $b=3$, and $S$ minimally generated by $n_1 <
n_2 <n_3$. It is then easy to see that for any element $\w$ in the
Apéry set there is a unique maximal expression of the form $kn_2 +
hn_3$, and that this maximal expression occurs with the maximum
possible value of $k$ for a representation of $\w$. So we may order
the elements in the Apéry set as follows:
\[ \Ap(S)=\{0, n_2,\dots , h n_2, n_3, \dots , n_3+h_1n_2, \dots,
k_Sn_3,\dots , k_Sn_3+ h_{k_S}n_2 \} \] with
$\ord(kn_3+j_kn_2)=k+j_k$ for all $k = 0, \dots, k_S$ and $j_k = 0,
\dots , h_k$ (where $h_0=h$).

\begin{rem}
\label{Ap} Let $kn_3+h'n_2 \in \Ap(S)$ with $\ord(kn_3+h'n_2)=k+h'$.
If $k>0$ then $(k-1)n_3+h'n_2 \in \Ap(S)$ with
$\ord((k-1)n_3+h'n_2)=k-1+h'$; and if $h'>0$ then $kn_3+(h'-1)n_2
\in \Ap(S)$ with $\ord(kn_3+(h'-1)n_2)=k+h'-1$. In particular,
\[0\leq h_{k_S} \leq \cdots \leq h_1 \leq h .\]
\end{rem}

\begin{rem}
By Lemma \ref{rs}, $\{n_2, \dots ,hn_2\} \cap T =\emptyset$.
\end{rem}

Next two lemmas show in particular that the behavior of the set of
torsion elements in $\Ap(S)$ is rigid:

\begin{lem}
\label{kh} With the notations introduced, assume that we have
maximal expressions $kn_3$ and $kn_3 + h'n_2$ for some $k, h' \geq
1$. Then
\[kn_3\in T \Leftrightarrow kn_3+h'n_2 \in T\]
and $\tord(kn_3)=\tord(kn_3+h'n_2)$.
\end{lem}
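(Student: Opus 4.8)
The plan is to prove both implications at once by exploiting the rigid form of the maximal expression of $s+cn_1$ supplied by Lemma \ref{rs}, passing between $kn_3$ and $kn_3+h'n_2$ by simply adding or deleting the summand $h'n_2$. The guiding principle is that $\ord$ is the maximum total degree over all representations, so exhibiting a single representation of $s+cn_1$ whose total degree exceeds $\ord(s)+c$ already certifies that $s\in T$ with $\tord(s)\leq c$.

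First I would treat the implication $kn_3+h'n_2\in T \Rightarrow kn_3\in T$. Setting $c=\tord(kn_3+h'n_2)$ and applying Lemma \ref{rs} to the maximal expression $kn_3+h'n_2$ (so that $r_1=0$, $r_2=h'$, $r_3=k$), I obtain a maximal expression $(kn_3+h'n_2)+cn_1=s_2n_2+s_3n_3$ with $s_1=0$ and, crucially, $s_2>h'$ by item (3), together with $s_2+s_3>(k+h')+c$. Subtracting $h'n_2$ from both sides yields $kn_3+cn_1=(s_2-h')n_2+s_3n_3$, which is a genuine representation because $s_2-h'\geq 1$, and whose total degree is $(s_2-h')+s_3=(s_2+s_3)-h'>k+c$. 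Hence $\ord(kn_3+cn_1)\geq (s_2-h')+s_3>k+c=\ord(kn_3)+c$, so $kn_3\in T$ and $\tord(kn_3)\leq c$.

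The reverse implication is symmetric and does not even require the full strength of Lemma \ref{rs}. Writing $c'=\tord(kn_3)$ and taking the maximal expression $kn_3+c'n_1=s_2n_2+s_3n_3$ (with $s_1=0$ and $s_2+s_3>k+c'$), I would add $h'n_2$ to both sides to get $(kn_3+h'n_2)+c'n_1=(s_2+h')n_2+s_3n_3$, a representation of total degree $(s_2+h')+s_3>(k+h')+c'$. Thus $\ord(kn_3+h'n_2+c'n_1)>(k+h')+c'=\ord(kn_3+h'n_2)+c'$, giving $kn_3+h'n_2\in T$ and $\tord(kn_3+h'n_2)\leq c'$. Combining the two bounds $\tord(kn_3)\leq\tord(kn_3+h'n_2)$ and $\tord(kn_3+h'n_2)\leq\tord(kn_3)$ forces the equality of torsion orders and simultaneously establishes the equivalence.

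The one delicate point is the subtraction step in the first implication: to guarantee that $kn_3+cn_1=(s_2-h')n_2+s_3n_3$ is a legitimate representation with nonnegative coefficients one needs $s_2\geq h'$, and it is exactly item (3) of Lemma \ref{rs} that provides the strict inequality $s_2>h'$. Everything else is bookkeeping on the total degrees of the representations produced, so I do not expect any serious obstacle beyond invoking Lemma \ref{rs} at the right place.
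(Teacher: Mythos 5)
Your proof is correct and follows essentially the same route as the paper: the forward direction by adding $h'n_2$ to a representation of $kn_3+cn_1$ of large total degree, and the reverse direction by invoking item (3) of Lemma \ref{rs} to get $s_2>h'$ and then subtracting $h'n_2$ from the maximal expression of $kn_3+h'n_2+cn_1$. The two one-sided bounds on the torsion orders combine exactly as in the paper's argument, so there is nothing to add.
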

\begin{proof}
It is clear that if $\ord(kn_3+c n_1)>k+c$ then $\ord(kn_3+h'n_2+c
n_1)>k+h'+c$ and that $\tord(kn_3)\geq \tord(kn_3+h'n_2)$.

Reciprocally, let $c = \tord(kn_3+h'n_2)$. By Lemma \ref{rs} we may
write $kn_3+h'n_2+c n_1=s_2n_2+s_3n_3$ with $s_2 > h'$ and
$s_2+s_3>k+h'+c$. Then $kn_3 + cn_1 = (s_2 - h')n_2 + s_3n_3$ with $s_2 - h' + s_3 > k + c$,
hence $kn_3\in T$ and $\tord(kn_3)\leq \tord(kn_3+h'n_2)$.
\end{proof}

\begin{lem}
\label{k} With the notations introduced, assume that we have maximal
expressions for some $1\leq k < k'$. Then
\[ kn_3\in T \Rightarrow k'n_3 \in T\]
$\tord(k'n_3)\leq \tord(kn_3)$
\end{lem}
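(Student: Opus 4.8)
The plan is to reuse the torsion witness of $kn_3$ directly, after shifting by a multiple of $n_3$. Set $c=\tord(kn_3)$. Applying Lemma~\ref{rs} to $s=kn_3$ (for which $r_1=r_2=0$ and $r_3=k$), I obtain a maximal expression
\[ kn_3 + cn_1 = s_2n_2 + s_3n_3, \]
with no $n_1$-term by item $(1)$, and with $s_2+s_3 > k+c$; the latter is exactly the inequality $\ord(kn_3+cn_1) > \ord(kn_3)+c = k+c$ that records $kn_3\in T$.

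Next I would add $(k'-k)n_3$ to both sides. This produces the representation
\[ k'n_3 + cn_1 = s_2n_2 + (s_3 + k' - k)n_3 \]
of $k'n_3+cn_1$, whose number of summands is $s_2 + s_3 + (k'-k) > (k+c) + (k'-k) = k'+c$. Since the order of an element is the maximum number of generators occurring in any of its representations, this forces $\ord(k'n_3+cn_1) > k'+c$. Because $k'n_3$ is assumed to be a maximal expression, $\ord(k'n_3)=k'$, so $\ord(k'n_3+cn_1) > \ord(k'n_3)+c$. Hence $k'n_3\in T$, and since $c$ is a positive integer witnessing torsion for $k'n_3$, the definition of $\tord$ gives $\tord(k'n_3)\leq c = \tord(kn_3)$.

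The argument is a monotonicity observation and presents no real obstacle. The only point requiring care is that the two displayed equalities are merely \emph{expressions} (not a priori maximal) of $kn_3+cn_1$ and $k'n_3+cn_1$, so their lengths bound the respective orders only from below; this is precisely the direction needed, while the assumed maximality of $kn_3$ and $k'n_3$ is what pins down the orders $k$ and $k'$ on the other side of each inequality. No appeal to the detailed structure of the Ap\'ery table of $M$ is needed beyond Lemma~\ref{rs}.
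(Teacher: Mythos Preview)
Your proof is correct and follows essentially the same route as the paper's: set $c=\tord(kn_3)$, use Lemma~\ref{rs} to write $kn_3+cn_1=s_2n_2+s_3n_3$ with $s_2+s_3>k+c$, then shift by $(k'-k)n_3$ to obtain an expression of $k'n_3+cn_1$ of length exceeding $k'+c$. The only quibble is in your closing commentary: the first displayed equality \emph{is} a maximal expression by construction, so it is only the second one whose length gives merely a lower bound on the order --- but this does not affect the argument.
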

\begin{proof}
Let $c = \tord(kn_3)$. By Lemma \ref{rs} we have that
$kn_3+cn_1=s_2n_2+s_3n_3$ with $s_2+s_3>k+c$. Then
$k'n_3+cn_1=kn_3+(k'-k)n_3+cn_1=s_2n_2+(s_3+k'-k)n_3$, so
$\ord(k'n_3+cn_1)\geq s_2+s_3+k'-k >k'+c$, so $k'n_3 \in T$ and
$\tord(k'n_3)\leq c = \tord (kn_3)$.
\end{proof}


\begin{cor}
\label{T} With the notations introduced let $k \in \{1,\dots
,k_S\}$.
\[T\cap \{ kn_3, \dots, kn_3 +h_k n_2 \} \neq  \emptyset
\Leftrightarrow kn_3 \in T.\]
\end{cor}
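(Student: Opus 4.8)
The plan is to deduce the statement essentially directly from Lemma \ref{kh}, with the ordering of $\Ap(S)$ described just before Remark \ref{Ap} supplying the bookkeeping. The backward implication is immediate: if $kn_3\in T$, then $kn_3$ itself lies in the set $\{kn_3,\dots,kn_3+h_kn_2\}$, so the intersection is nonempty and there is nothing more to check.

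For the forward implication, I would argue as follows. Suppose the intersection is nonempty and choose an element of it; by the way the set is indexed it has the form $kn_3+h'n_2$ for some $0\le h'\le h_k$, and by hypothesis $kn_3+h'n_2\in T$. If $h'=0$ this already says $kn_3\in T$, and we are done. So assume $h'\ge 1$. The point to verify is that the hypotheses of Lemma \ref{kh} hold, namely that $kn_3$ and $kn_3+h'n_2$ are maximal expressions. This is exactly what Remark \ref{Ap} guarantees: since $kn_3+h'n_2\in\Ap(S)$ with $\ord(kn_3+h'n_2)=k+h'$, we also have $kn_3\in\Ap(S)$ with $\ord(kn_3)=k$, and both displayed representations are maximal. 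With these maximal expressions in place, Lemma \ref{kh} applies and gives the implication $kn_3+h'n_2\in T\Rightarrow kn_3\in T$ (indeed with equal torsion orders). Hence $kn_3\in T$, which completes the equivalence.

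I do not expect a genuine obstacle here, since the corollary is in effect a repackaging of Lemma \ref{kh}: the real work — producing a maximal expression $kn_3+h'n_2+cn_1=s_2n_2+s_3n_3$ with $s_2>h'$ and then subtracting $h'n_2$ to reach a maximal expression of $kn_3+cn_1$ — is already carried out there. The only care needed is the routine check that one is in the situation of maximal expressions, which is why I would invoke Remark \ref{Ap} explicitly before citing Lemma \ref{kh}.
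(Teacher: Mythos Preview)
Your argument is correct and matches the paper's intent: the corollary is stated without proof there, as an immediate consequence of Lemma~\ref{kh}, and you have simply made that deduction explicit, including the routine verification via Remark~\ref{Ap} that the expressions involved are maximal.
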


\begin{ex}
Let $S=\langle 8,11,18 \rangle $. The Apéry set is
\[ \Ap(S)=\{
0,n_2=11,2n_2=22,3n_2=33,n_3=18,n_3+n_2=29,2n_3=36,2n_3+n_2=47\}
\]
and we have that $2n_3+n_1=44=4n_2$; so $2n_3\in T$ but $n_3\notin
T$. Thus, the converse of Lemma \ref{k} does not hold. The Apéry table of $M$ is
in this case
\[
\begin{array}{|c|c|c|c|c|c|c|c|c|}\hline  \Ap(S)&0&11&18&22&29&33&36&47 \\ \hline
\Ap(M)&8&11&18&22&29&33&36&47 \\ \hline
\Ap(2M)&16&19&26&22&29&33&36&47 \\ \hline
\Ap(3M)&24&27&34&30&37&33&44&47 \\ \hline
\Ap(4M)&32&35&42&38&45&41&44&55 \\ \hline
\Ap(5M)&40&43&50&46&53&49&52&55\\
\hline
\end{array}
 \]
\end{ex}

\medskip

As a consequence we have a very simple method to decide when the
tangent cone is Cohen-Macaulay in this case:

\begin{prop}
\label{CM} Let $b=3$ and $S$ minimally generated by $n_1 < n_2 < n_3$. With the notations introduced,
\[ G(S) \textrm{ is a Cohen-Macaulay ring } \Leftrightarrow k_Sn_3
\notin T. \]
\end{prop}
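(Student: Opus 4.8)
The plan is to reduce the statement to the characterization recalled earlier that $G(S)$ is Cohen-Macaulay precisely when its torsion set $T$ is empty, and then to show that the single condition $k_Sn_3\notin T$ already forces $T=\emptyset$. The forward implication is immediate: if $G(S)$ is Cohen-Macaulay then $T=\emptyset$, so in particular $k_Sn_3\notin T$ (note that $k_S\geq 1$, since $n_3$ is a minimal generator and hence $n_3\in\Ap(S)$, so $k_Sn_3$ is a genuine nonzero element). All of the content is in the converse.

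For the converse I would first propagate non-torsion downward along the multiples of $n_3$. Assuming $k_Sn_3\notin T$, the contrapositive of Lemma \ref{k} (taking $k'=k_S$) gives $kn_3\notin T$ for every $1\leq k\leq k_S$; here I use that each $kn_3$ is genuinely a maximal expression, which is guaranteed by the description of $\Ap(S)$ as the elements $kn_3+j_kn_2$ with $\ord(kn_3+j_kn_2)=k+j_k$. Next, for each such $k$, Corollary \ref{T} upgrades $kn_3\notin T$ to $T\cap\{kn_3,\dots,kn_3+h_kn_2\}=\emptyset$, so the entire $k$-th block of the Ap\'ery set avoids $T$. The remaining block, for $k=0$, is $\{0,n_2,\dots,hn_2\}$: here $0\notin T$ trivially, and $\{n_2,\dots,hn_2\}\cap T=\emptyset$ by Lemma \ref{rs}(2), since a torsion element must carry a nonzero coefficient on $n_3$. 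As these blocks partition $\Ap(S)$, I conclude $\Ap(S)\cap T=\emptyset$.

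The final step is to bootstrap from the Ap\'ery set to all of $S$. Given any $x\in T$, write $x=w+cn_1$ with $w\in\Ap(S)$ its unique Ap\'ery representative modulo $n_1=e$ and $c\geq 0$. If $w=0$ then $x=cn_1$ is a multiple of $n_1$, which is never torsion by Remark \ref{rem4}, a contradiction. Otherwise, Remark \ref{rem3}, which says that torsion is preserved when subtracting copies of $n_1$ down to the Ap\'ery representative, yields $w\in T$, contradicting $\Ap(S)\cap T=\emptyset$. Hence $T=\emptyset$, and $G(S)$ is Cohen-Macaulay.

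The main obstacle is conceptual rather than computational: one must see that non-torsion at the single extremal element $k_Sn_3$ controls the whole semigroup. This rests on chaining three rigidity facts in the correct order, namely monotonicity of torsion along the $n_3$-multiples (Lemma \ref{k}), the block equivalence (Corollary \ref{T}), and the downward closure of torsion within a residue class (Remark \ref{rem3}), so that the lone hypothesis propagates first across each block of $\Ap(S)$ and then off $\Ap(S)$ to every element of $S$. The most delicate passage is this last reduction from $\Ap(S)\cap T=\emptyset$ to $T=\emptyset$, where the $w=0$ case must be isolated and disposed of separately.
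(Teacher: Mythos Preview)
Your proof is correct and follows essentially the same approach as the paper's: the paper's proof is a one-line chain of equivalences $G(S)$ Cohen--Macaulay $\Leftrightarrow T=\emptyset \Leftrightarrow \Ap(S)\cap T=\emptyset \Leftrightarrow k_Sn_3\notin T$, citing ``the above lemmas'' for the last step, and you have simply unpacked these steps explicitly using Lemma~\ref{k}, Corollary~\ref{T}, Lemma~\ref{rs}(2), and Remark~\ref{rem3} in the right order. Your treatment of the bootstrap from $\Ap(S)\cap T=\emptyset$ to $T=\emptyset$ is more careful than the paper's (which suppresses it entirely), though the separate handling of $w=0$ is not strictly needed since $0\notin T$ directly.
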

\begin{proof}
$G(S)$ is Cohen-Macaulay if and only if $T(G(S))=0$ if and only if
$T = \emptyset$ if and only if $\Ap(S)\cap T=\emptyset$ and by the above lemmas if and
only if $k_Sn_3 \notin T$.
\end{proof}

\begin{ex}
Let $S=\langle 8,11,18 \rangle $. The Apéry set is
\[ \Ap(S)=\{
0,n_2=11,2n_2=22,3n_2=33,n_3=18,n_3+n_2=29,2n_3=36,2n_3+n_2=47\}
\]
and we have that $2n_3+n_1=44=4n_2$; so $2n_3\in T$ but $n_3\notin
T$. Thus, the converse of Lemma \ref{k} does not hold. The Apéry table of $M$ is
in this case
\[
\begin{array}{|c|c|c|c|c|c|c|c|c|}\hline  \Ap(S)&0&11&18&22&29&33&36&47 \\ \hline
\Ap(M)&8&11&18&22&29&33&36&47 \\ \hline
\Ap(2M)&16&19&26&22&29&33&36&47 \\ \hline
\Ap(3M)&24&27&34&30&37&33&44&47 \\ \hline
\Ap(4M)&32&35&42&38&45&41&44&55 \\ \hline
\Ap(5M)&40&43&50&46&53&49&52&55\\
\hline
\end{array}
 \]
\end{ex}

\medskip

If $n_1=3$ then $3 = b = e$ and it is well known by \cite[Theorem 2]{Sa1} that $G(S)$ is Cohen-Macaulay (observe that in this case the reduction number is $r\leq 1$ and from the Apéry table is also clear that $G(S)$ is Cohen-Macaulay). Next we see the case $n_1 = 4$.

\begin{cor}
\label{4} Let $S=\langle 4,n_2,n_3\rangle $. Then
\[ G(S) \textrm{ is not  a Cohen-Macaulay ring }\Leftrightarrow
n_3+4=3n_2\Leftrightarrow r=3.\]
\end{cor}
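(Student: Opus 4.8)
The plan is to combine Proposition \ref{CM} with the rigidity that $e = n_1 = 4$ imposes on the Ap\'ery set. First I would note that, since $n_2$ and $n_3$ are minimal generators exceeding $n_1$, neither is divisible by $4$, and I would show that $k_S = 1$ by checking that $2n_3\notin\Ap(S)$: if $n_3$ is even then $2n_3\equiv 0\pmod 4$ and $0$ is a smaller element of $S$ in that class, while if $n_3$ is odd then $2n_3\equiv 2\pmod 4$ and either $n_2$ (when $n_2\equiv 2$) or $2n_2$ (when $n_2$ is odd) is a smaller element of $S$ in that class. By Remark \ref{rem2} this forbids any Ap\'ery element from carrying an $n_3$-coefficient larger than one, so $k_S=1$; hence, by Proposition \ref{CM}, $G(S)$ fails to be Cohen--Macaulay exactly when $n_3\in T$.

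The core is then the equivalence $n_3\in T\Leftrightarrow n_3+4=3n_2$. For the forward direction I would set $c=\tord(n_3)$ and invoke Lemma \ref{rs} to write a maximal expression $n_3+cn_1=s_2n_2+s_3n_3$ with $s_1=0$ and $s_2\geq 1$. A short order count rules out $s_3\geq 1$: otherwise $cn_1=s_2n_2+(s_3-1)n_3$ would be a representation of $cn_1$ of length $s_2+s_3-1$, whereas $\ord(cn_1)=c$ because $n_1$ is never torsion (Remark \ref{rem4}), giving $s_2+s_3\leq c+1$ and contradicting $\ord(n_3+cn_1)=s_2+s_3\geq \ord(n_3)+c+1=c+2$. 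Thus $n_3+4c=s_2n_2$ with $s_2\geq c+2\geq 3$. The decisive step is to exclude $s_2\geq 4$: in that case $n_3=(s_2-4)n_2+4(n_2-c)$, and since $c\leq r-1\leq e-2=2<n_2$ both coefficients are nonnegative, exhibiting $n_3\in\langle n_1,n_2\rangle$ and contradicting that $n_3$ is a minimal generator. Hence $s_2=3$, which forces $c=1$ and $n_3+4=3n_2$. The converse is immediate: if $n_3+4=3n_2$ then $n_3+n_1=3n_2$ has order at least $3>\ord(n_3)+1$, so $n_3\in T$.

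Finally I would close the loop through the reduction number. Since $b=3<4=e$ we have $2\leq r\leq e-1=3$; if $r=2$ then $G(S)$ is Cohen--Macaulay by Remark \ref{tord}, while if $G(S)$ is Cohen--Macaulay then by Remark \ref{HiCM} the Hilbert function increases strictly from $\mu(\fm)=3$ to $e=4$, a single increment, forcing $r=2$. Therefore $G(S)$ is not Cohen--Macaulay if and only if $r=3$, which together with the previous paragraph yields the full triple equivalence. I expect the main obstacle to be the elimination of $s_2\geq 4$: the whole argument turns on converting the torsion relation $n_3+4c=s_2n_2$ into a representation of $n_3$ over $\{n_1,n_2\}$ alone, and on using the bound $c\leq r-1\leq 2$ (from $r\leq e-1$) to keep the coefficient $n_2-c$ nonnegative.
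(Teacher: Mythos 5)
Your proof is correct, and its core coincides with the paper's: both reduce the failure of the Cohen--Macaulay property to $n_3\in T$ via Proposition \ref{CM} and then extract the relation $n_3+4=3n_2$ from Lemma \ref{rs}. The surrounding steps, though, follow genuinely different routes. For $k_S=1$ you argue by congruences modulo $4$ (showing $2n_3\notin\Ap(S)$ and invoking Remark \ref{rem2}), while the paper simply lists the only two possible Ap\'ery sets in multiplicity $4$, namely $\{0,n_2,n_3,n_3+n_2\}$ and $\{0,n_2,2n_2,n_3\}$. For the arithmetic step the paper first pins down $\tord(n_3)=1$ by Remark \ref{tord} and then leaves the conclusion as ``easy to deduce from Lemma \ref{rs}''; you instead carry a general torsion order $c$, eliminate $s_3\geq 1$ by the order count on $cn_1$, and eliminate $s_2\geq 4$ by rewriting $n_3=(s_2-4)n_2+4(n_2-c)$ against minimality of the generator $n_3$ --- exactly the detail the paper omits, and the right one. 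Note, however, that your inequality $c\leq r-1$ is not justified by $r\leq e-1$ alone; it needs Remark \ref{tord}, which in fact gives the strict bound $\tord(w)<r-1\leq 2$ for $w\in\Ap(S)$, hence $c=1$ outright --- citing it there would collapse much of your middle paragraph. Finally, for the equivalence with $r=3$ the paper writes out the three possible Ap\'ery tables and reads $r$ off them (thereby producing the explicit tables reused in the subsequent example and remark), whereas you argue abstractly: $r=2$ forces Cohen--Macaulayness by Remark \ref{tord}, and Cohen--Macaulayness forces $r=2$ because by Remark \ref{HiCM} the Hilbert function must climb strictly from $b=3$ to $e=4$. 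Both routes are valid; the paper's enumeration buys concrete information about the tables, while your Hilbert-function argument is shorter and shows more generally that whenever $b=e-1$ the tangent cone is Cohen--Macaulay if and only if $r=2$.
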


\begin{proof}
First of all we have that $r\leq 3$ and so $\tord(w)\leq 1$ for any
$w\in Ap(S)$, by Remark \ref{tord} . It is clear that if $n_3+4=3n_2$ then $n_3\in T$ and
$G(S)$ is not Cohen-Macaulay. On the other hand, the Apéry set of
$S$ is $\Ap(S)=\{0,n_2,n_3,n_3+n_2\}$ or
$\Ap(S)=\{0,n_2,2n_2,n_3\}$. In both cases, if $G(S)$ is not
Cohen-Macaulay then $n_3\in T$ by Proposition \ref{CM} and, since $\tord(n_3) =1$, it
is easy to deduce from Lemma \ref{rs} that $n_3+4=3n_2$.

For the right equivalence we observe that if $n_3+4=3n_2$ then
$n_3+n_2\notin \Ap(S)$ (since $n_3+n_2$ is equivalent to $0$ module
$4$). So by Lemma \ref{rs} there are three possibilities for the
Apéry table of $S$:
\[
\begin{array}{|c|c|c|c|c|}\hline  \Ap(S)&0&n_2&n_3&n_3+n_2\\ \hline
\Ap(M)&4&n_2&n_3&n_3+n_2\\ \hline \Ap(2M)&8&n_2+4&n_3+4&n_3+n_2\\
\hline
 \end{array} \quad
\begin{array}{|c|c|c|c|c|}\hline  \Ap(S)&0&n_2&2n_2&n_3\\ \hline
\Ap(M)&4&n_2&2n_2&n_3\\ \hline \Ap(2M)&8&n_2+4&2n_2&n_3+4 \\
\hline
  \end{array}
  \]
  \[
  \begin{array}{|c|c|c|c|c|}\hline  \Ap(S)&0&n_2&2n_2&n_3\\ \hline
\Ap(M)&4&n_2&2n_2&n_3\\ \hline \Ap(2M)&8&n_2+4&2n_2&n_3+4\\ \hline
\Ap(3M)&12&n_2+8&2n_2+4&n_3+4 \\ \hline
  \end{array}
 \]
The table at the bottom corresponds to the only non Cohen-Macaulay
case (that is, when $n_3+4=3n_2$) and $r=3$ in this case. Otherwise
$r=2$.
\end{proof}

\begin{ex}
$\langle 4,5,6 \rangle$, $\langle 4,5,7 \rangle$ and $\langle 4,5,11
\rangle$ are semigroups each of them corresponding to each one of
the Apéry tables in the above corollary.
\end{ex}

\begin{rem}
In the situation of Corollary \ref{4}, we observe that if $G(S)$ is
not Cohen-Macaulay then the torsion $T(G(S))$ is minimally generated
by $(t^{n_3})^{\ast}$. Since $\tord(n_3)=1$ we have that $T(G(S))=\{ 0, (t^{n_3})^{\ast}  \}$, and so it has length $1$ and
$G(S)$ is then Buchsbaum because $G(S)_{+}\cdot T(G(S)) = 0$.
\end{rem}

Next we prove that in fact the above situation is the only
possibility for $G(S)$ to be Buchsbaum when $b=3$. As a consequence,
we get that if $b=3$ then $G(S)$ is Buchsbaum if and only if $\la
(T(G(S))) \leq 1$. This result was conjectured by Sapko \cite{S} and has
been proved by Shen in \cite{Sh} and also by
D'Anna-Micale-Sammartano in \cite{DMS} using different methods.

\begin{thm}
\label{buch} Assume that $b=3$. With the notations introduced,
\[ G(S) \textrm{ is  a Buchsbaum, not Cohen-Macaulay ring }
\Leftrightarrow T=\{k_Sn_3\} \]
\end{thm}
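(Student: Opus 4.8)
The plan is to prove both implications, with the forward direction ($G(S)$ Buchsbaum not Cohen-Macaulay $\Rightarrow T = \{k_Sn_3\}$) being the substantive one. The reverse direction is easy: if $T = \{k_Sn_3\}$ then $T(G(S))$ is generated by the single element $(t^{k_Sn_3})^{\ast}$, and by Remark \ref{tordB} its torsion order must be exactly $1$ (since $G(S)$ is not Cohen-Macaulay, $T$ is nonempty, and a Buchsbaum ring forces $\tord \leq 1$). Then $G(S)_+ \cdot T(G(S)) = 0$, which is precisely the Buchsbaum condition; and $T$ being a single element with $k_Sn_3 \in T$ means, by Proposition \ref{CM}, that $G(S)$ is genuinely not Cohen-Macaulay. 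So I would dispose of this direction in one short paragraph.

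For the forward direction, assume $G(S)$ is Buchsbaum but not Cohen-Macaulay. By Proposition \ref{CM} we know $k_Sn_3 \in T$, so $k_Sn_3 \in T$ is automatic and the content is to show nothing else lies in $T$. The Buchsbaum hypothesis, via Remark \ref{tordB}, forces $\tord(s) = 1$ for every $s \in T$. First I would argue that $k_S = 1$, i.e. that $2n_3 \notin \Ap(S)$ or at least that no element $kn_3$ with $k \geq 2$ appears as a maximal expression in $\Ap(S)$: indeed, by Lemma \ref{k}, if some $kn_3 \in T$ with $k < k_S$ then $\tord(k_Sn_3) \leq \tord(kn_3) = 1$, but I want to rule out \emph{two distinct} multiples of $n_3$ both being forced into $T$ with torsion order one, which would contradict Buchsbaumness by exhibiting a nonzero product $(t^{n_3})^{\ast}(t^{kn_3})^{\ast}$-type element surviving. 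The cleanest route is to combine Corollary \ref{T} (which says $T \cap \{kn_3, \dots, kn_3 + h_kn_2\} \neq \emptyset \iff kn_3 \in T$) with Lemma \ref{kh} (torsion orders within a fixed $k$-row all coincide) to reduce the whole question to understanding which multiples $kn_3$ lie in $T$.

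The heart of the argument is then to show that $k_S = 1$ and that within the row $k=1$ only the bare element $n_3 = k_Sn_3$ (with $h' = 0$) is torsion. Suppose for contradiction that $k_S \geq 2$. Since $k_Sn_3 \in T$ with $\tord = 1$, write $k_Sn_3 + n_1 = s_2n_2 + s_3n_3$ a maximal expression with $s_2 > 0$ and $s_2 + s_3 > k_S + 1$ by Lemma \ref{rs}. I would then track how this relation propagates: using Lemma \ref{k} in the contrapositive, or using the growth in the Ap\'ery table columns recorded by the stacked-basis structure, the existence of a length-one torsion at $k_Sn_3$ together with $k_S \geq 2$ should force a second independent torsion element (for instance $(k_S-1)n_3 \in T$, or a neighbor $k_Sn_3 + h'n_2$), and any two such elements give $(t^{n_1})^{\ast}$ acting nontrivially on the torsion, violating $G(S)_+ \cdot T(G(S)) = 0$. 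The main obstacle I anticipate is precisely this bookkeeping step: showing that $k_S \geq 2$ (or the presence of any $h' > 0$ torsion in the top row) is incompatible with \emph{every} torsion element having order exactly one. This is where the rigidity encoded in Lemmas \ref{kh}, \ref{k} and Remark \ref{Ap} must be pushed hardest — likely by writing out the maximal expression $k_Sn_3 + n_1 = s_2n_2 + s_3n_3$ explicitly, subtracting off $n_3$ to land in the $(k_S-1)$-row, and checking that the resulting element is again torsion of order one, thereby producing the forbidden nonzero product in $G(S)_+ \cdot T(G(S))$ and the desired contradiction.
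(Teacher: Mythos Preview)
Your reverse direction has a small circularity: you invoke Remark \ref{tordB} (which presupposes $G(S)$ is $k$-Buchsbaum) to conclude $\tord(k_Sn_3)=1$, but you are trying to \emph{prove} Buchsbaumness. The fix is immediate: if $T=\{k_Sn_3\}$ and $\tord(k_Sn_3)=c>1$, then by Remark \ref{rem3} the element $k_Sn_3+n_1$ would also lie in $T$, contradicting $\#T=1$. With $\tord=1$ the torsion module has length one and $G(S)_+$ kills it, so $G(S)$ is Buchsbaum. This matches the paper.

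The forward direction contains a genuine error. You set out to prove $k_S=1$, but this is \emph{not} part of the conclusion and is in general false: the theorem asserts $T=\{k_Sn_3\}$, where $k_S$ may well exceed $1$. The corollary immediately following the theorem in the paper explicitly describes the Ap\'ery set with $k$ ranging up to $k_S-1$ and a separate element $k_Sn_3$, with no constraint forcing $k_S=1$. So the contradiction you seek from the assumption $k_S\geq 2$ will never materialize.

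What the paper actually does is a two-step argument. First it shows $\Ap(S)\cap T=\{k_Sn_3\}$: if some $k_Sn_3+h'n_2$ with $h'>0$ were in $\Ap(S)\cap T$, then $(t^{h'n_2})^{\ast}\cdot(t^{k_Sn_3})^{\ast}$ would be a nonzero element of $T(G(S))$ hit by $G(S)_+$; and if $kn_3\in T$ for some $k<k_S$, then $(t^{n_3})^{\ast}\cdot(t^{kn_3})^{\ast}=\overline{t^{(k+1)n_3}}$ would be nonzero (since $(k+1)n_3$ is a maximal expression), again violating Buchsbaumness. Second---and this is the step entirely missing from your plan---one must rule out the shifts $k_Sn_3+cn_1$ for $c\geq 1$. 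Write a maximal expression $k_Sn_3+n_1=s_2n_2+s_3n_3$. If $s_3\neq 0$ then Lemma \ref{subtorsion} forces $(k_S-1)n_3\in\Ap(S)\cap T$, contradicting the first step. Hence $s_3=0$, so $k_Sn_3+n_1=s_2n_2$, and Lemma \ref{general} then gives $k_Sn_3+n_1\notin T$; by Remark \ref{rem3} all further shifts are non-torsion too. Your outline gestures at ``tracking how this relation propagates'' but never isolates this $s_3=0$ observation, which is the actual hinge of the proof.
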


\begin{proof}
If $T=\{k_Sn_3\}$ then $T(G(S))$ is minimally generated by
$(t^{k_Sn_3})^{\ast}$, and because $\tord (k_Sn_3) = 1$ we get that
$T(G(S))$ has length $1$ and so, as we have noted at the beginning of this section, $G(S)$ is Buchsbaum.

Reciprocally, assume that $G(S)$ is Buchsbaum and not
Cohen-Macaulay. By Proposition \ref{CM} the element $k_Sn_3$ belongs
to $T$. We claim that $\Ap(S) \cap T= \{k_Sn_3 \}$. First observe
that $\tord(k_Sn_3) = 1$ by Remark \ref{tordB}. Hence, if $k_Sn_3+h'n_2 \in \Ap(S)\cap T$
with $\ord(k_Sn_3+h'n_2)=k_S+h'$ then $h'n_2$ has order $h'$ and
\[(t^{h'n_2})^{\ast} \cdot
(t^{k_Sn_3})^{\ast}=\overline{t^{k_Sn_3+h'n_2}}\in
\fm^{k_S+h'}/\fm^{k_S+h'+1}
\]
is not zero, which contradicts the Buchsbaum property of $G(S)$.
Now, by Lemmas \ref{kh} and \ref{k} we only have to prove that
$kn_3\notin T$ for any $k<k_S$. Assume that $kn_3\in T$ for some
$k<k_S$. Then
\[(t^{n_3})^{\ast} \cdot
(t^{kn_3})^{\ast}=\overline{t^{(k+1)n_3}}\in \fm^{k+1}/\fm^{k+2}
\]
is not zero because $(k+1)n_3 \in \Ap(S)$ with $\ord(k+1)n_3=k+1$,
which is again a contradiction because $G(S)$ is Buchsbaum.

Now, since $\Ap(S)\cap T=\{k_Sn_3\}$ we have that
\[ T\subset \{k_Sn_3+cn_1; c\geq 0\}.\]
On the other hand, $k_Sn_3+n_1=s_2n_2+s_3n_3$ with $\ord(s_2n_2 +
s_3n_3) = s_2+s_3 > k_S+1$.

We claim that $s_3=0$. Otherwise it is easy to see that
$(k_S-1)n_3\in \Ap(S) \cap T$ which contradicts the first claim.

Hence $k_Sn_3+n_1=s_2n_2$ which implies, by  Lemma \ref{general},
that $k_Sn_3+n_1\notin T$. As a consequence, $k_Sn_3 + cn_1\notin T
$ for all $c\geq 1$ and $T=\{k_Sn_3\}$ as we wanted to prove.
\end{proof}

\begin{cor}
Assume that $b=3$ and that $G(S)$ is Buchsbaum and not Cohen-Macaulay. With the notations
introduced, the Apéry set of $S$ has the form
\[ \Ap(S)=\{0, n_2,\dots , h n_2, \dots , kn_3+j_kn_2 , \dots ,
k_Sn_3 \} \] with $\ord(kn_3+j_kn_2)=k+j_k$ for all $k = 0, \dots,
k_{S}-1$ and $j_k = 0, \dots , h_k$ (where $h_0=h$) and
$\ord(k_Sn_3) = k_S$, and $G(S)$ is of the form
\[
G(S)\cong \left( F(t^{n_1}) \oplus \bigoplus_{k=0}^{k_{S}-1}
\bigoplus_{j_k=0}^{h_k} F(t^{n_1})(-k-j_k) \oplus F(t^{n_1})(-\alpha)\right)\oplus
 \frac{F(t^{n_1})}{((t^{n_1})^{\ast})F(t^{n_1})}
(-k_S),
\]
\noindent
where $\alpha = \ord (k_Sn_3+n_1)$.
\end{cor}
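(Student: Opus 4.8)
The plan is to read off both assertions from Theorem~\ref{buch} together with the general description of $G(S)$ as an $F(t^{n_1})$-module recorded before Remark~\ref{rem4}. By Theorem~\ref{buch} the hypothesis is equivalent to $T=\{k_Sn_3\}$, and since $k_Sn_3\in\Ap(S)$ this already gives $\Ap(S)\cap T=\{k_Sn_3\}$. Because $G(S)$ is Buchsbaum, Remark~\ref{tordB} forces $\tord(k_Sn_3)=1$, and from the proof of Theorem~\ref{buch} I retain that $k_Sn_3+n_1=\alpha n_2$ is a maximal expression, where $\alpha=\ord(k_Sn_3+n_1)$.

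First I would pin down the shape of $\Ap(S)$, namely that the last block collapses to the single element $k_Sn_3$, i.e. $h_{k_S}=0$. If instead $k_Sn_3+h'n_2\in\Ap(S)$ with $h'\geq 1$ and $\ord(k_Sn_3+h'n_2)=k_S+h'$, then Lemma~\ref{kh} (using $k_Sn_3\in T$) would put $k_Sn_3+h'n_2$ into $\Ap(S)\cap T$, contradicting $\Ap(S)\cap T=\{k_Sn_3\}$. Hence $h_{k_S}=0$ and $\Ap(S)$ has exactly the stated form, with $\ord(kn_3+j_kn_2)=k+j_k$ for $k\le k_S-1$ and a lone top element $k_Sn_3$ of order $k_S$.

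Next I would compute the module decomposition column by column of the Ap\'ery table, feeding the landing invariants into the general formula. Every column whose $\Ap(S)$-entry $kn_3+j_kn_2$ has $k\le k_S-1$ is non-torsion (as $T=\{k_Sn_3\}$), so it has no true landing and $d_i=\ord(kn_3+j_kn_2)=k+j_k$; it contributes the free summand $F(t^{n_1})(-k-j_k)$ and nothing to the torsion. The column of $0$ contributes the leading $F(t^{n_1})$. This accounts for everything except the single column carrying $k_Sn_3$, into which all the torsion is concentrated.

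The delicate point, and the main thing to verify carefully, is the behavior of this last column. Its entry equals $k_Sn_3$ for the rows $0\le n\le\ord(k_Sn_3)=k_S$, so its first landing ends at $b_1^i=k_S$; since $\tord(k_Sn_3)=1$, at row $k_S+1$ the entry becomes the smallest element of $(k_S+1)M$ in that congruence class, namely $k_Sn_3+n_1=\alpha n_2$, so a true landing begins there and $c_1^i=1$. As $T=\{k_Sn_3\}$, the element $\alpha n_2$ is not torsion, so the column stabilizes and this second landing ends at $d_i=\ord(\alpha n_2)=\alpha$. Substituting $l_i=1$, $b_1^i=k_S$, $c_1^i=1$ and $d_i=\alpha$ into the general decomposition, this column yields the free summand $F(t^{n_1})(-\alpha)$ together with the torsion summand $F(t^{n_1})/((t^{n_1})^{\ast})F(t^{n_1})(-k_S)$. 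Collecting all columns produces exactly the claimed isomorphism, with the understanding that the leading $F(t^{n_1})$ is the contribution of $0\in\Ap(S)$ while the double sum ranges over the remaining elements $kn_3+j_kn_2$, $(k,j_k)\neq(0,0)$.
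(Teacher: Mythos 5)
Your proof is correct and is essentially the argument the paper intends: the corollary is stated there without an explicit proof, as an immediate consequence of Theorem \ref{buch} (which gives $T=\{k_Sn_3\}$, whence $h_{k_S}=0$ via Lemma \ref{kh} and $\tord(k_Sn_3)=1$ via Remark \ref{tordB}) combined with the general column-by-column $F(t^{n_1})$-module decomposition from Section 2, exactly as you carry out, including the key computation $b_1^i=k_S$, $c_1^i=1$, $d_i=\alpha$ for the column of $k_Sn_3$. Your closing observation about the indexing is also right: read literally, the $(k,j_k)=(0,0)$ term of the double sum duplicates the leading $F(t^{n_1})$, so the formula must be understood as excluding that pair (otherwise the free rank would be $n_1+1$ instead of $n_1$).
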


With similar ideas we can also describe the case $2$-Buchsbaum. As a
consequence we will have that if $b=3$, then $G(S)$ is $2$-Buchsbaum
(and not Buchsbaum) if and only if $\la(T(G(S)) = 2$. With different
techniques this result has also been proved by Shen \cite{Sh}. Note that in our case we may give an explicit description of $T(G(S))$ in terms of the set $T$ which can be easily detected in the Apéry table.

\begin{thm}
\label{2b}
Assume that $b=3$. With the notations introduced,
\[ G(S) \textrm{ is  $2$-Buchsbaum and not $1$-Buchsbaum}
\Leftrightarrow
\begin{cases} T=\{k_Sn_3,k_Sn_3+n_1 \} \textrm{ or } \\
T=\{k_Sn_3,k_Sn_3+n_2 \}, \textrm{ or } \\
T=\{k_Sn_3,(k_S-1)n_3 \}
 \end{cases}
\]
\end{thm}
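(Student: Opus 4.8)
=== PROOF PROPOSAL ===

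The plan is to prove a biconditional characterizing when $G(S)$ is $2$-Buchsbaum but not Buchsbaum, in terms of three possible shapes for the torsion set $T$. Throughout I rely on Theorem~\ref{buch} (the Buchsbaum characterization $T=\{k_Sn_3\}$), on Remark~\ref{tordB} (in the $2$-Buchsbaum case every torsion element has $\tord \leq 2$), and on the rigidity Lemmas~\ref{kh}, \ref{k}, and \ref{rs}, together with Corollary~\ref{T}.

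\textbf{The forward direction.} Assume $G(S)$ is $2$-Buchsbaum and not Buchsbaum. By Proposition~\ref{CM} we have $k_Sn_3 \in T$, and by Theorem~\ref{buch} the set $T$ must strictly contain $\{k_Sn_3\}$. I would first show that $\Ap(S)\cap T$ is tightly constrained: since $G(S)$ is not $1$-Buchsbaum, there must be a torsion element on which $G(S)_+$ acts nontrivially, forcing some element of torsion order $2$, or forcing an extra torsion generator. I would argue exactly as in the proof of Theorem~\ref{buch} that if $k_Sn_3+h'n_2\in\Ap(S)\cap T$ with $h'\geq 1$ then the product $(t^{h'n_2})^{\ast}(t^{k_Sn_3})^{\ast}=\overline{t^{k_Sn_3+h'n_2}}$ is a nonzero element killed by $G(S)_+^2$ only if $h'n_2$ has order exactly the prescribed value — using the $2$-Buchsbaum hypothesis to bound which products can survive. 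Combining Lemma~\ref{k} (which propagates $kn_3\in T$ upward) and Lemma~\ref{kh} (which ties $kn_3$ to $kn_3+h'n_2$) I would show that the only new torsion elements permitted inside $\Ap(S)$ are either $(k_S-1)n_3$ or possibly $k_Sn_3+n_2$, yielding the third and second cases. For the first case, $T=\{k_Sn_3,k_Sn_3+n_1\}$, I would analyze the maximal expression $k_Sn_3+n_1=s_2n_2+s_3n_3$ as in Theorem~\ref{buch}: the $s_3=0$ argument from the Buchsbaum proof now fails in a controlled way, and $\tord(k_Sn_3)=2$ gives the extra torsion element $k_Sn_3+n_1$ of torsion order $1$.

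\textbf{The converse direction.} For each of the three listed forms of $T$, I would verify directly that $T(G(S))$ has length $2$ and is annihilated by $G(S)_+^2$ but not by $G(S)_+$. In the case $T=\{k_Sn_3,(k_S-1)n_3\}$, Lemma~\ref{k} forces $\tord((k_S-1)n_3)=2$ and $\tord(k_Sn_3)=1$, so the torsion module is generated by $(t^{(k_S-1)n_3})^{\ast}$ with $((t^{n_1})^{\ast})^2$ annihilating it but $(t^{n_1})^{\ast}$ not; this is the classic $2$-Buchsbaum, non-Buchsbaum module structure. In the cases $T=\{k_Sn_3,k_Sn_3+n_1\}$ and $T=\{k_Sn_3,k_Sn_3+n_2\}$, both torsion elements have torsion order $1$, so $F(t^{n_1})_+$ already annihilates the torsion; here the failure of the Buchsbaum property is a purely ring-theoretic phenomenon (the product of two initial forms landing in torsion), exactly the subtlety flagged in the remark before Lemma~\ref{general}, and I would exhibit the offending degree-one multiplication $(t^{n_2})^{\ast}$ or $(t^{n_1})^{\ast}$ sending a free generator into torsion, while checking that squaring $G(S)_+$ does kill it.

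\textbf{Main obstacle.} The hard part will be the converse in the two cases where all torsion orders equal $1$: there the $F(t^{n_1})$-module structure alone does not detect the failure of the Buchsbaum property, so I must work with the genuine ring multiplication on $G(S)$ and pin down precisely which product of initial forms witnesses $G(S)_+\cdot T(G(S))\neq 0$ while $G(S)_+^2\cdot T(G(S))=0$. This requires carefully tracking maximal expressions of $k_Sn_3+n_1$ and $k_Sn_3+n_2$ and their orders, and using Lemma~\ref{general} to guarantee that no further torsion sneaks in beyond the two listed elements, so that the length is exactly $2$ and the module does not collapse to the Buchsbaum case of Theorem~\ref{buch}.
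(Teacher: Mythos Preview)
Your converse direction contains concrete errors, and you have inverted where the real difficulty lies.

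The converse is in fact immediate: in each of the three listed cases $\#T=2$, hence $\lambda(T(G(S)))=2$, and any length-$2$ module over a graded local ring is automatically annihilated by the square of the maximal ideal, so $G(S)$ is $2$-Buchsbaum; it is not Buchsbaum because $T\neq\{k_Sn_3\}$ contradicts Theorem~\ref{buch}. That is the whole argument. Your case analysis of torsion orders is wrong and internally inconsistent: in the case $T=\{k_Sn_3,(k_S-1)n_3\}$ you claim $\tord((k_S-1)n_3)=2$, but that would force $(k_S-1)n_3+n_1\in T$, contradicting $\#T=2$; in fact both elements have $\tord=1$, lie in \emph{different} Ap\'ery-table columns, and give two torsion summands of order $1$. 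The torsion module is not cyclic over $F(t^{n_1})$ here; what witnesses the failure of Buchsbaumness is multiplication by $(t^{n_3})^{\ast}$, not $(t^{n_1})^{\ast}$. Conversely, in the case $T=\{k_Sn_3,k_Sn_3+n_1\}$ you assert both elements have $\tord=1$, contradicting what you correctly said in your forward sketch: here $\tord(k_Sn_3)=2$, both elements sit in the \emph{same} column, and the torsion is a single summand of order $2$.

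The genuinely hard direction is the forward one, and your sketch does not engage with its actual content. The paper splits according to whether $\Ap(S)\cap T=\{k_Sn_3\}$, or $k_Sn_3+n_2\in\Ap(S)$, or $k_Sn_3+n_2\notin\Ap(S)$ with $(k_S-1)n_3\in T$. The third case in particular requires real work: one must separately establish $\tord(k_Sn_3)=1$, rule out $kn_3\in T$ for $k<k_S-1$, exclude $(k_S-1)n_3+n_2\in\Ap(S)$, prove $\tord((k_S-1)n_3)=1$, and finally check $k_Sn_3+n_1,(k_S-1)n_3+n_1\notin T$. Several of these steps proceed by writing down two explicit maximal expressions, subtracting them, and deriving a contradiction from the minimal generating set. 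Your proposal gives no indication of how any of these exclusions would be carried out.
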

\begin{proof}
$\Leftarrow$ is clear since by similar arguments as in Proposition
\ref{buch} we have that $\lambda(T(G(S))))=2$. Note that in the
structure of $G(S)$ there will be only one torsion direct summand of
order $2$ in the first case, and two torsion direct summands of
order $1$ in the other two cases.

We assume now that $G(S)$ is $2$--Buchsbaum and not Buchsbaum. Then
the torsion order of any $s\in S$ is at most $2$ by Remark \ref{tordB} and $\{k_Sn_3\}
\subsetneq T$ by Proposition \ref{CM} and Theorem \ref{buch}. We will distinguish three cases.

\underline{First case}: We assume that $\Ap(S)\cap T=\{k_Sn_3\}$.
We will prove that in this case the torsion order of $k_Sn_3$ is
equal to $2$.

If $\tord(k_Sn_3)=1$ then by Lemma \ref{rs},
$k_Sn_3+n_1=s_2n_2+s_3n_3$ with $\ord (s_2n_2+s_3n_3) =
s_2+s_3>k_S+1$. If $s_3\neq 0$ then $k_S>1$ because $\{n_1, n_2,
n_3\}$ is a minimal system of generators. But then $(k_S-1)n_3 \in
\Ap(S)\cap T$ (see Lemma \ref{subtorsion}) which contradicts the hypothesis. Thus
$k_sn_3+n_1=s_2n_2$, which does not belong to $T$ by Lemma
\ref{general}. So $k_sn_3+cn_1 \notin T$ for any $c\geq 1$ and
$T=\{k_Sn_3\}$. Hence the tangent cone $G(S)$ is Buchsbaum, a
contradiction.

Now, similarly to the above argument, since the torsion order of
$k_Sn_3$ is equal to $2$ and $(k_S-1)n_3$ is not a torsion element,
we may write $k_Sn_3+2n_1=s_2n_2\notin T$. Thus, $k_Sn_3+cn_1 \notin
T$ for any $c\geq 2$ and the only elements in the Apéry table of $M$
which are torsion are $0, k_Sn_3, k_Sn_3+n_1$. It is then clear that
$T=\{k_Sn_3,k_Sn_3+n_1 \}$.

\underline{Second case}: We assume that $\{k_Sn_3\}\subsetneq
\Ap(S)\cap T$ and $k_Sn_3+n_2 \in \Ap(S)$.

Note that $k_Sn_3+n_2$ is a maximal expression because $k_S$ is the
maximum possible value for such a representation.

First, we prove that $(k_S-1)n_3\notin T$. If $(k_S-1)n_3$ is a
torsion element, then because $G(S)$ is $2$--Buchsbaum we have that
$k_Sn_3+n_2= (k_S-1)n_3 + n_3 + n_2 = s_2n_1 + s_2n_2+s_3n_3$ with
$\ord(sn_1 + s_2n_2+s_3n_3) = s_1 + s_2 + s_3>k_S+1$, which is a
contradiction because $k_Sn_3+n_2$ is a maximal expression.

Secondly, by Lemma \ref{kh} we have that $k_Sn_3+n_2
\in T$, and that $\tord(k_Sn_3) = \tord(k_Sn_3+n_2)$. On the other
hand, $k_Sn_3+n_2+n_1$ is not a maximal expression because $G(S)$ is
$2$-Buchsbaum. Hence $\tord(k_Sn_3+n_2) = 1$ and  $\tord(k_Sn_3) =
1$ as well.

Finally, we prove that there are no more elements in the Apéry table
belonging to $T$. For that it suffices to see that $k_Sn_3 + n_1$
and $k_Sn_3 + n_2 + n_1$ are not in $T$. Because of Lemma \ref{rs},
a maximal expression of $k_Sn_3 + n_1$ must be of the form $k_Sn_3 +
n_1 = s_2n_2 + s_3n_3$ with $s_2 + s_3 > k_S + 1$. But then, if $s_3
\neq 0$ we would have that $(k_S-1)n_3\in T$, a contradiction. Hence
$s_3=0$ and by Lemma \ref{general}, $k_Sn_3 + n_1 \notin T$. A
similar argument shows that $k_Sn_3 + n_2 + n_1 \notin T$ (note that
$(k_S-1)n_3 + n_2\notin T$ by Lemma
\ref {kh} because $(k_S-1)n_3\notin T$). Thus we get $T=\{k_Sn_3, k_Sn_3+n_2\}$.

\underline{Third case}: We assume that $\{k_Sn_3\}\subsetneq
\Ap(S)\cap T$ and $k_Sn_3+n_2 \notin \Ap(S)$. Then, by Lemmas
\ref{kh} and \ref{k}, $(k_S-1)n_3\in \Ap(S)\cap T$. Note that
$k_S\geq 2$.

First we show that $\tord(k_Sn_3)=1$.  Assume the contrary. Then,
note that the element $n_3+n_1\in 2M\setminus 3M$ since otherwise,
by Lemma \ref{k}, $n_3\in T$ and $1=\tord(n_3)=\cdots
=\tord(k_Sn_3)$ which supposes a contradiction. But then we have
that
\[ 0\neq (t^{n_3+n_1})^{\ast}\cdot (t^{(k_S-1)n_3})^{\ast}=\overline{t^{k_Sn_3+n_1}}
\in \fm^{k_S+1}/\fm^{k_S+2} \] with $(t^{n_3+n_1})^{\ast}\in
G(S)_+^2$, which contradicts the assumption that $G(S)$ is
2--Buchsbaum.

Secondly, note that $kn_3\notin T$ for $k\notin \{k_S,k_S-1\}$,
otherwise $(k_S-2)n_3\in T$ and
\[ 0\neq (t^{2n_3})^{\ast} \cdot (t^{(k_S-2)n_3})^{\ast}
=\overline{t^{k_S}}\in \fm^{k_S}/\fm^{k_S+1}\,,
\]
again a contradiction to the fact that $G(S)$ is 2--Buchsbaum.

Now we prove that $(k_S-1)n_3 + n_2 \notin \Ap(S)$. Assume the
contrary: by Lemma \ref{kh} this will imply that $(k_S-1)n_3 +
n_2\in T$. Since $G(S)$ is $2$--Buchsbaum we have that
$(k_S-1)n_3+2n_2 = r_1n_1 + r_2n_2 + r_3n_3$, a maximal expression
with $r_1+r_2+r_3 > k_S+1$. We want to see first that $r_1\neq 0$.
If $r_1=0$ then $(k_S-1)n_3+2n_2 = r_2n_2 + r_3n_3$, a maximal
expression with $r_2+r_3 > k_S+1$. We have two possibilities: if
$k_S-1<r_3$ then $2n_2 = r_2n_2 + (r_3 - (k_S-1))n_3 > 2n_2$ because
$r_2 + r_3 -(k_S-1) > 2$, a contradiction. If $k_S-1 \geq r_3$ then
$((k_S-1)-r_3)n_3 = (r_2-2)n_2$ which implies that
$r_2-2>(k_S-1)-r_3$ because $n_3>n_2$. Consequently,
$((k_S-1)-r_3)n_3$ is not a maximal expression, which is a
contradiction to our set up in the order of the elements of
$\Ap(S)$. Thus we have that $r_1\neq 0$ as wanted to see. On the
other hand, $r_2=0$ because if not we would have that $(k_S-1)n_3 +
n_2$ is not a maximal expression and so this element cannot belong
to the Apéry set, which contradicts our hypothesis. Hence we have
$(k_S-1)n_3 + 2n_2 = r_1n_1 + r_3n_3$, with $r_1+r_3 > k_S+1$ and
$r_1\neq 0$. If $r_3 >k_S-1$ then $2n_2 = r_1n_1 + (r_3-(k_S-1))n_3$
with $r_1 + r_3-(k_S-1 )>2$, and so $2n_2$ is not a maximal
expression. But this is a contradiction: since $k_Sn_3\in T$ and has
order $1$ we have that by Lemma \ref{rs} that $k_Sn_3 + n_1 = s_2n_2
+ s_3n_3$, a maximal expression with $s_2 + s_3 > k_S +1$. We have
$s_3\leq 1$ because $kn_3\notin T$ for $k< k_S-1$, so $s_2 >
k_S\geq 2$ and this means that $2n_2$ is a maximal expression. Hence
we must have that $r_3\leq k_S-1$. Now we get the two following
equalities:
\begin{eqnarray}
\nonumber
((k_S-1) - r_3)n_3 + 2n_2 & = & r_1n_1 \\
\nonumber k_Sn_3 + n_2 & = & r'_1n_1
\end{eqnarray}
(the second one because $k_Sn_3+n_2\notin \Ap(S)$, $k_Sn_3\in
Ap(S)$, and $(k_S-1)n_3+n_2\in \Ap(S)$ by hypothesis). Subtracting
the first one to the second one we have that $(1+r_3)n_3 -n_2 =
(r'_1-r_1)n_1$ (and so $r'_1-r_1>0$), equivalently, $(1+r_3)n_3 =
(r'_1-r_1)n_1 + n_2$. But since $1+r_3\leq k_S$, the element
$(1+r_3)n_3\in \Ap(S)$ and so it must happen that $r'_1 -r_1 = 0$.
Hence $n_2 = (1+r_3)n_3$, a contradiction.

Our next step is to prove that $\tord((k_S-1)n_3)=1$. Assume the
contrary. Then it must be $2$ because $G(S)$ is $2$--Buchsbaum. By
Lemma \ref{rs} this implies that $(k_S-1)n_3 + 2n_1 = r_2n_2 +
r_3n_3$, a maximal expression with $r_2+r_3>k_S+1$. Then $r_3=0$
because $kn_3\notin T$ for $k<k_S-1$. Hence we have the equality
$(k_S-1)n_3+2n_1 = r_2n_2$, a maximal expression with $r_2>k_S+1$.
On the other hand, because the torsion order of $k_Sn_3$ is $1$ we
have again by Lemma \ref{rs} that $k_Sn_3+n_1 = r'_2n_2 + r'_3n_3$,
a maximal expression with $r'_2+r'_3>k_S+1$. Now, $r'_3=0$ because
we are assuming that $\tord((k_S-1)n_3=2$. Thus we have the two
equalities
\begin{eqnarray}
\nonumber
(k_S-1)n_3 + 2n_1 & = & r_2n_2 \\
\nonumber k_Sn_3 + n_1 & = & r'_2n_2
\end{eqnarray}
Subtracting the first one to the second one we get $n_3-n_1=
(r'_2-r_2)n_2$ (and so $r'_2-r_2>0$), equivalently $n_3 = n_1 +
(r'_2-r_2)n_2$, a contradiction.

We have proved that $T\cap \Ap(S) = \{(k_S-1)n_3, k_Sn_3\}$, both
elements with torsion order one. To finish, we must see that there
are no more torsion elements in $S$, and for that it will suffice to
see that there are no other  elements in the Apéry table which are
torsion. Since they must be congruent with $k_S$ or $k_S-1$ it is
enough to show that $k_Sn_3 + n_1, (k_S-1)n_3+n_1 \notin T$.

Observe first that by Lemma \ref{rs} we have an equality $(k_S-1)n_3
+ n_1 = s_2n_2 + s_3n_3$, a maximal expression with $s_2+s_3>k_S$.
Then, $s_3=0$ because $kn_3\notin T$ for $k<k_S-1$. Thus we have
$(k_S-1)n_3 + n_1 = s_2n_2$. But the elements of the form $kn_2$ are
never torsion for any $k\geq 1$, so $(k_S-1)n_3 + n_1 \notin T$.

Finally assume that $k_Sn_3+n_1\in T$. Again by Lemma \ref{rs} we
have an equality $k_Sn_3 + n_1 =  s_2n_2 + s_3n_3$, a maximal
expression with $s_2, s_3\neq 0$ and $s_2+s_3>k_S+1$. Observe that
$n_2 + n_3$ is a maximal expression. If $s_3 > k_S$ then $n_1 =
s_2n_2 + (s_3 - k_S)n_3$, which is impossible. So $k_S \geq s_3$.
Observe that by Lemma \ref{k}, $s_3n_3 \in T$ and hence $s_3 = k_S$
or $k_S-1$. On the other hand, $(k_S-s_3)n_3 + n_1 = s_2n_2$, with
$s_2 > k_S - s_3 + 1$, which means that $(k_S-s_3)n_3$ is torsion.
Hence $k_S - s_3 = k_S$ or $k_S -1$. Because $s_3 \neq 0$ we get
$k_S - s_3 = k_S -1$, and so $k_S = 2$ since $k_S\geq 2$. Thus $n_3
+n_2 \notin \Ap(S)$ and we must have $n_3 + n_2 = k_1n_1 + k_2n_2 +
k_3n_3$, with $k_1 \geq 1$. Observe then that $k_2 = k_3 = 0$, and
because $n_3 + n_2$ is a maximal expression, $k_1 \leq 2$. But this
is a contradiction because $2n_1 < n_2 + n_3$.
\end{proof}

\begin{ex}
The semigroups $S_1=\langle 5,6,14 \rangle$, $S_2=\langle 8,11,18 \rangle$, and $S_3=\langle 10,16,27 \rangle$ are
$2$--Buchsbaum and not Buchsbaum. Following the cases in the above
proof, we have that $S_1=\langle 5,6,14 \rangle$ belongs to the first case,
$S_2=\langle 8,11,18 \rangle$ to the second case, and $S_3=\langle 10,16,27 \rangle $ to the third
one.
\end{ex}

The following example shows that $k$ is not necessarily the maximal
possible value for $\la (T(G(S)))$, when $k>2$ and $G(S)$ is
$k$--Buchsbaum.

\begin{ex}
Let $S=<6,7,16>$. The
Apéry table of $S$ is

\[
\begin{array}{|c|c|c|c|c|c|c|c|c|c|c|c|}\hline
\Ap(S)&0&7&14&21&16&23 \\ \hline \Ap(M)&6&7&14&21&16&23 \\ \hline
\Ap(2M)&12&13&14&21&22&23 \\ \hline
 \Ap(3M)&18&19&20&21&28&29 \\ \hline
\Ap(4M) &24&25&26&27&28&35 \\ \hline \Ap(5M) &30&31&32&33&34&35 \\
\hline
\end{array}
 \]

\medskip

As a consequence, $T=\{16,22,23,29\}$ and
$\ord(x+y)>\ord(x)+\ord(y)$ for all $x\in T$ and $y\in 3M\setminus
4M$. Hence $G(S)$ is $3$--Buchsbaum.  But $\la (T(G(S)))=4$.
\end{ex}

\begin{rem} Note  that when $G(S)$ is
$k$--Buchsbaum and not Cohen-Macaulay, then $T\neq \emptyset$ and
for each element $x\in T$ with maximal expression
$x=r_1n_1+r_2n_2+r_3n_3$, we have $r_2n_2+r_3n_3\in T$ by Remark
\ref{rem3} and $r_1\leq k-1$.  Now set
$$\begin{array}{lll}
x' & = & \max \{ x \mid (k_S-x)n_3 \in T \} \\
y' & = & \max \{ y \mid (k_S-x)n_3 + yn_2 \in T \,\, \textrm{is maximal expression for some} \, x \geq 0 \} \\
\end{array}$$

Observe that by Lemma \ref{kh}, $(k_S-x')n_3+y'n_2 \in T $, and since $G(S)$ is $k$--Buchsbaum it must happen that $x' + y' \leq
k-1$. Let $X = x' + 1, Y = y ' +1$. Then, $X + Y \leq k + 2$ and a
biggest value of the function $f(X,Y) = XY$ under the constraint $X
+ Y \leq k + 2$ gives a bound for the cardinal of the set of
elements that can be written in the above way, which contains $T$.
It is then easy to see that this biggest value is attained for $X =
Y = \frac{k+1}{2}$ and so $$\# \{x\in T \mid x=r_2n_2+r_3n_3 \mbox{
is a maximal expression }\}\leq \frac{(k+1)^2}{4}.$$
 Hence $\la (H^0_{G(S)_+}(G(S)))\leq \frac{k(k+1)^2}{4}$ and $\# (\Ap(S) \cap T ) \leq
\frac{(k+1)^2}{4}$.
\end{rem}

As we have seen in the $2$--Buchsbaum case, the above formula is not a
sharp bound for $\la (H^0_{G(S)_+}(G(S)))$, in the sense that for
$k=2$, we never have the equality in this formula. In fact, the structure of the set $T$ is more involved than the one used to get the formula. Nevertheless, having a good bound for $\la (H^0_{G(S)_+}(G(S)))$ may be useful to detect the $k$-Buchsbaum property of $G(S)$. Hence is natural to ask:
\begin{que}
Assume that $b=3$ and $G(S)$ is $k$--Buchsbaum. Is there a sharp
formula depending on $k$ bounding $\lambda(T(G(S)))$?
\end{que}

\begin{rem}
From the previous bounds we have in the $2$-Buchsbaum case that
$\# (\Ap(S) \cap (T \setminus \{0\})) \leq 2$. This gives an alternative for proving the fact that $(k_S-1)n_3+n_2 \notin \Ap (S)$ in the third case of Theorem \ref{2b}. Namely, if $(k_S-1)n_3+n_2 \in \Ap (S)$, it also belongs to $T$ by Lemma \ref{kh} and so $\# (\Ap(S) \cap T ) \geq 3$, a contradiction.
\end{rem}

\section{Non-decreasing Hilbert functions}




The objective of this  section is to study the growth of the Hilbert
function of a numerical semigroup ring. For that we shall use the
Apéry table structure to provide some new cases where the Hilbert
function is non decreasing. Namely, we shall prove that this
property holds when the embedding dimension is $4$ and $G(S)$ is
Buchsbaum, and when $S$ is balanced, a notion that extends to any
embedding dimension the case considered by Patil-Tamone
in \cite{PT}. On the way, we shall also provide a new and simple
proof for the well known case of embedding dimension $3$.

\medskip

We start by recalling the two subsets in the Apéry table that
control the behavior of the Hilbert function.

\begin{rem}\label{Hi}
Let $k$ be a positive integer. Consider the subsets of $(k-1)M$ defined by
\[ D_k:=\{x \mid \ord(x)=k-1 \textrm{ and } \ord(x+n_1)>k \} \subset
T\]
 and
\[C_k:=\{ y  \mid \ord(y)=k \textrm{ and } y-n_1\notin ( k-1)M \}. \]

\noindent Observe that the elements in $D_k$ have torsion order one.
We have that $\# C_k$ is the number of ends of landings in the row
$k$ of the Apéry table, and that $\# D_k$ is the number of
beginnings of true landings in the row $k$ of the Apéry table. Then,
the successive differences of the Hilbert function of $G(S)$ may be
computed as $\Hi(k)-\Hi(k-1)= \# C_k - \# D_k $.
\end{rem}

In order to prove that $\Hi$ is a non-decreasing function we will
construct an injective map $D_k \longrightarrow C_k$. Next lemma
will allow to construct elements in $C_k$ from elements in $D_k$.

\begin{lem}\label{land}
Assume that $x\in T$ with $\tord(x)=1$ and that
$x+n_1=\sum^b_{i=2}s_in_i$ is a maximal expression. Let
$l_x=\ord(x+n_1)-\ord(x)-1 > 0$. Then
\begin{enumerate}
\item $l_x < \sum^{b-1}_{i=2}s_i$.

\item Let $y_x = \sum^b_{i=2}(s_i-r_i)n_i$, with $0\leq r_i\leq s_i$ for all $i =2, \dots, b$, and such that $\sum^b_{i=2}r_i = l_x$. Then $y_x\in C_k$, where $k=\ord(x)+1$.
\end{enumerate}
\end{lem}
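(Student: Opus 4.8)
The plan is to translate everything into the language of orders of maximal expressions and then handle the two items separately, item (1) by a size (valuation) estimate and item (2) by the superadditivity of the order function. Throughout I would write $m=\ord(x)$, so that $k=m+1$, and use that, since $x+n_1=\sum_{i=2}^b s_in_i$ is a maximal expression, $\ord(x+n_1)=\sum_{i=2}^b s_i$ and hence $l_x=\sum_{i=2}^b s_i-m-1$. (The vanishing of the $n_1$-coefficient is Remark \ref{rem5}, and the standing hypothesis $\tord(x)=1$ is exactly $l_x>0$.)

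For item (1), the first observation is that, since $\sum_{i=2}^{b-1}s_i=\sum_{i=2}^b s_i-s_b=(m+1+l_x)-s_b$, the inequality $l_x<\sum_{i=2}^{b-1}s_i$ is equivalent to the single statement $s_b\le m$. To prove the latter I would argue by sizes: on one hand $s_bn_b\le\sum_{i=2}^b s_in_i=x+n_1$; on the other hand, a maximal expression of $x$ writes it as a sum of $m$ generators, each at most $n_b$, so $x\le m\,n_b$. Combining these and using $n_1<n_b$,
\[
s_bn_b\le x+n_1\le m\,n_b+n_1<(m+1)n_b,
\]
whence $s_b\le m$, which is what item (1) asserts.

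For item (2), I would first note that $y_x=\sum_{i=2}^b(s_i-r_i)n_i$ is a subrepresentation of the maximal expression of $x+n_1$, so by Remark \ref{rem1} it is itself a maximal expression; therefore $\ord(y_x)=\sum_{i=2}^b(s_i-r_i)=(m+1+l_x)-l_x=m+1=k$. It then remains to check $y_x-n_1\notin(k-1)M=mM$. Setting $R=\sum_{i=2}^b r_in_i$, which is a nonzero subrepresentation of the same maximal expression (as $\sum r_i=l_x>0$), we get $\ord(R)=l_x$, and the key identity
\[
y_x-n_1=\Big(\sum_{i=2}^b s_in_i-n_1\Big)-R=x-R.
\]
If $x-R\notin S$ we are done, since $mM\subseteq S$. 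If $x-R\in S$, then from $x=(x-R)+R$ and the superadditivity of the order, $\ord(a+b)\ge\ord(a)+\ord(b)$, I obtain $m=\ord(x)\ge\ord(x-R)+l_x$, so $\ord(x-R)\le m-l_x<m$; hence $x-R\notin mM$. Either way $y_x-n_1\notin(k-1)M$, so $y_x\in C_k$.

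The genuinely substantive step is the size estimate in item (1); once $s_b\le m$ is secured, item (1) is pure bookkeeping and item (2) collapses to the clean identity $y_x-n_1=x-R$ combined with the elementary superadditivity of $\ord$. The only point in item (2) that I expect to require care is the case distinction on whether $x-R$ lies in $S$ at all, since membership in $mM$ presupposes membership in $S$; isolating this prevents a spurious appeal to $\ord(x-R)$ when $x-R$ is not even a semigroup element.
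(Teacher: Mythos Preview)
Your proof is correct and follows essentially the same approach as the paper: for (1) both arguments reduce to the size estimate $s_b\le\ord(x)$ via $s_bn_b\le x+n_1<(\ord(x)+1)n_b$, and for (2) both compute $\ord(y_x)=k$ from the subrepresentation property and then use the identity $y_x-n_1=x-R$ together with superadditivity of $\ord$ to conclude $\ord(y_x-n_1)<k-1$ when $y_x-n_1\in S$. Your case split on whether $x-R\in S$ is equivalent to the paper's split on whether $y_x\in\Ap(S)$.
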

\begin{proof}
(1) Let $x=\sum^b_{i=1}r_in_i$ be a maximal expression with
$\ord(x)=\sum^b_{i=1}r_i$. If $s_b \geq \sum^b_{i=1}r_i+1$, then
\[s_b n_b \geq
(\sum^b_{i=1}r_i+1)n_b>r_1n_1+r_2n_2+\cdots+r_b n_b+n_1=x+n_1,\]
which is a contradiction. Thus $s_b < \sum^b_{i=1}r_i+1$ so that
$\ord(x+n_1)=\sum^b_{i=2}s_i <
\sum^{b-1}_{i=2}s_i+\sum^b_{i=1}r_i+1=\sum^{b-1}_{i=2}s_i+\ord(x)+1$.
Hence $l_x < \sum^{b-1}_{i=2}s_i$.

\medskip

(2) Since $y_x = \sum^b_{i=2}(s_i-r_i)n_i$ is a maximal expression
we have that $\ord(y_x) = \sum_{i=2}^b s_i - \sum_{i=2}^b r_i =
\ord(x+n_1)- l_x =\ord(x) + 1 = k$. Now we have to prove that
$y-n_1\notin (k-1)M$. If $y\in Ap(S)$ then $y-n_1\notin S$. If not,
then $y-n_1 = x-w \in S$, where $w=\sum_{i=2}^b r_in_i$. We have
that $\ord(w) = l_x>0$, so $k = \ord (x) + 1 > \ord (x-w) + 1 =
\ord(y-n_1) + 1$. Hence $k-1 > \ord(y-n_1)$ and $y-n_1 \notin
(k-1)M$.
\end{proof}

The above general construction allows us to show the non-decreasing
of the Hilbert function for numerical semigroup rings in several
cases. For instance, when we apply our strategy to the three
generated case, we obtain a very simple proof which is similar to
the one by I. C. \c{S}erban  in \cite{Se}.

\begin{prop}
Let $b = 3$. Then the numerical semigroup ring $k[[S]]$ has
non-decreasing Hilbert function.
\end{prop}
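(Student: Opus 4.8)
The plan is to reduce everything to a counting inequality via the identity recorded in Remark \ref{Hi}, namely $\Hi(k)-\Hi(k-1)=\#C_k-\#D_k$. Thus it suffices to produce, for every $k\geq 1$, an injective map $D_k\hookrightarrow C_k$, since this forces $\#C_k\geq\#D_k$ and hence $\Hi(k)\geq\Hi(k-1)$. The construction of such a map is precisely what Lemma \ref{land} is built to feed: every $x\in D_k$ lies in $T$ with $\tord(x)=1$, so by Lemma \ref{rs} its successor admits a maximal expression $x+n_1=s_2n_2+s_3n_3$ with $s_1=0$, and $l_x=\ord(x+n_1)-\ord(x)-1=s_2+s_3-k$ is a positive integer satisfying $l_x<s_2$ by Lemma \ref{land}(1) (here $\sum_{i=2}^{b-1}s_i=s_2$ since $b=3$).

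Among the admissible targets furnished by Lemma \ref{land}(2), I would make the canonical choice of stripping the $l_x$ excess generators entirely from the $n_2$-part, setting
\[ y_x=(s_2-l_x)n_2+s_3n_3=(k-s_3)n_2+s_3n_3. \]
This is a subrepresentation of the maximal expression $s_2n_2+s_3n_3$, hence itself a maximal expression of order $k$ by Remark \ref{rem1}, and Lemma \ref{land}(2) guarantees $y_x\in C_k$. The admissibility constraints $0\leq l_x\leq s_2$ are exactly what Lemma \ref{land}(1) secures. This defines the assignment $x\mapsto y_x$.

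The heart of the argument, and the step I expect to be the main obstacle, is injectivity of $x\mapsto y_x$. I would first record the elementary uniqueness fact that in embedding dimension three a maximal expression using only $n_2$ and $n_3$ is unique: if $an_2+bn_3=a'n_2+b'n_3$ with $a+b=a'+b'$, then $(a-a')n_2=(a-a')n_3$, which forces $a=a'$ and $b=b'$ because $n_2\neq n_3$. Applying this to the equality $y_x=y_{x'}$ (both of order $k$, both expressed only in $n_2,n_3$) yields $s_3=s_3'$ immediately.

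It then remains to upgrade the coincidence of $n_3$-coefficients to $x=x'$, and this is where the order function does the work. From $x+n_1=s_2n_2+s_3n_3$ and $x'+n_1=s_2'n_2+s_3n_3$ one obtains $x-x'=(s_2-s_2')n_2$; assuming $s_2\geq s_2'$, a maximal expression of $x'$ augmented by $s_2-s_2'$ extra copies of $n_2$ exhibits an expression of $x$ of total length $(k-1)+(s_2-s_2')$, whence $\ord(x)\geq(k-1)+(s_2-s_2')$. Since $x,x'\in D_k$ both have order $k-1$, this is only possible if $s_2=s_2'$, giving $x=x'$. The delicate points to keep in view throughout are that $l_x>0$ (which is the very defining condition of $D_k$) and that the reduction remains within the bounds $0\leq l_x\leq s_2$, both guaranteed by Lemma \ref{land}(1); with these in place the map is well defined, lands in $C_k$, and is injective, completing the proof.
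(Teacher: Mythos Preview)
Your proof is correct and follows essentially the same route as the paper's: both construct the map $D_k\to C_k$ by $x\mapsto y_x=(s_2-l_x)n_2+s_3n_3$ via Lemma~\ref{land}, and both establish injectivity by using $\ord(x)=\ord(x')=k-1$ to force the $n_2$-shift between $x$ and $x'$ to vanish. Your intermediate step isolating $s_3=s_3'$ via uniqueness of maximal expressions in $n_2,n_3$ is a harmless variant of the paper's more direct manipulation $y_x=x+n_1-l_xn_2=x'+n_1-l_{x'}n_2$, which yields $x'=x+(l_{x'}-l_x)n_2$ in one line.
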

\begin{proof}

By using the notation of Remark \ref{Hi}, it will
suffice to construct an injective map $D_k \longrightarrow C_k$. Let
$x\in D_k$. By Lemma \ref{rs}, we have that $x=r_1n_1+r_2n_2+r_3n_3$
with $\ord(x)=k-1=r_1+r_2+r_3$ and $x+n_1=s_2n_2+s_3n_3$ with
$\ord(x+n_1)=s_2+s_3>k$ and $s_2\neq 0$. Let
$l_x=\ord(x+n_1)-\ord(x)-1$. Then  $l_x < s_2$ by Lemma
\ref{land}(1) and so we may consider the element
$y_x:=(s_2-l_x)n_2+s_3n_3$. By Lemma \ref{land}(2), $\ord (y_x)=k =
\ord(x)+1$ and  $y_x\in C_k$.

Thus the map
\[ \begin{array}{cll}
D_k &\longrightarrow & C_k\\ x&\mapsto &y_x \end{array}
\]
is well defined.

Now we see that this map is injective. Assume that $x$, $x'$ are
elements in $D_k$ with $y_x=y_{x'}$. Thus, $\ord(x)=\ord(x')=k-1$,
and if we write $x+n_1=s_2n_2+s_3n_3$, $x'+n_1=s'_2n_2+s'_3n_3$ with
$s_2+s_3=\ord(x+n_1)$ and $s'_2+s'_3=\ord(x'+n_1)$, then
$y_x=x+n_1-l_xn_2=x'+n_1-l_{x'}n_2=y_{x'}$. So, assuming for
instance that $l_{x'}\geq l_x$, we will have $x'=x+(l_{x'}-l_x)n_2$.
But since $\ord(x)=\ord(x')$ this is only possible if $l_{x'}-l_x
=0$, and so $x=x'$.
\end{proof}

We may also prove, in general, that if the torsion of $G(S)$ has length one then the Hilbert function is non-decreasing (extending the case Cohen-Macaulay).

\begin{prop}
Let $S$ be a numerical semigroup such that $\la (H^0_{G(S)_+}(G(S)))=1$. Then, $k[[S]]$ has
non-decreasing Hilbert function.
\end{prop}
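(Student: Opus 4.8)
The plan is to reuse the strategy of the preceding (three-generated) proposition: by Remark \ref{Hi} one has $\Hi(k) - \Hi(k-1) = \#C_k - \#D_k$, so it suffices to show $\#D_k \le \#C_k$ for every $k \ge 1$. The point is that the hypothesis $\la(H^0_{G(S)_+}(G(S))) = \la(T(G(S))) = 1$ forces the sets $D_k$ to be almost always empty.

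First I would convert the length hypothesis into a count of torsion elements of $S$. Since $F(t^{e})/((t^{e})^{\ast})F(t^{e})$ is the residue field $k$ of the principal ideal domain $F(t^{e})$, the $F(t^{e})$-length of the finite-length module $T(G(S))$ equals $\dim_k T(G(S))$; and by the structure theory recalled in Section 2 the initial forms $(t^{s})^{\ast}$ with $s \in T$ form a $k$-basis of $T(G(S))$ (each torsion summand $F(t^{e})/((t^{e})^{\ast})^{c}$ contributing the chain of $c$ consecutive torsion monomials $(t^{g})^{\ast}, (t^{g+e})^{\ast}, \dots, (t^{g+(c-1)e})^{\ast}$). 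Hence $\la(T(G(S))) = \#T$, and the hypothesis gives $\#T = 1$, say $T = \{w\}$. Moreover $\tord(w) = 1$: if $\tord(w) = c > 1$ then by Remark \ref{rem3} the distinct elements $w, w+n_1, \dots, w+(c-1)n_1$ would all belong to $T$, contradicting $\#T = 1$.

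Next I would use that $D_k \subseteq T$ for every $k$ (noted in Remark \ref{Hi}) and that the $D_k$ are pairwise disjoint, since an element of $S$ has a well-defined order. Therefore $\bigsqcup_k D_k \subseteq T = \{w\}$, so at most one $D_k$ is nonempty; if $D_k \ne \emptyset$ then $D_k = \{w\}$ and necessarily $k = \ord(w)+1$. For every $k$ with $D_k = \emptyset$ we get $\Hi(k)-\Hi(k-1) = \#C_k \ge 0$. For the single value $k = \ord(w)+1$, since $w \in T$ has torsion order one, Lemma \ref{land} applies verbatim: with $w + n_1 = \sum_{i=2}^{b} s_i n_i$ a maximal expression and $l_w = \ord(w+n_1) - \ord(w) - 1 > 0$, part (2) yields an element $y_w \in C_k$, so $\#C_k \ge 1 = \#D_k$ and again $\Hi(k)-\Hi(k-1) \ge 0$. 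This establishes that $\Hi$ is non-decreasing.

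The appeals to Remark \ref{Hi} and Lemma \ref{land} are routine; the only step that deserves care is the translation $\la(T(G(S))) = \#T$, that is, checking that the module-theoretic length hypothesis is exactly the condition that $S$ has a unique torsion element. Once $T = \{w\}$ with $\tord(w) = 1$ is in hand, no further computation remains, which is precisely why the statement extends the Cohen-Macaulay case $T = \emptyset$, where the Hilbert function is known to increase up to $r$ and then stabilize.
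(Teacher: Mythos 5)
Your proof is correct and follows essentially the same route as the paper's: reduce via Remark \ref{Hi} to comparing $\#D_k$ with $\#C_k$, observe that the length-one hypothesis forces a unique torsion element (necessarily of torsion order one) so that at most one $D_k$ is nonempty and is a singleton, and then produce an element of $C_k$ via Lemma \ref{land}(2). The only difference is that you spell out the identification $\la(T(G(S)))=\#T$ and the disjointness of the $D_k$, which the paper leaves implicit.
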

\begin{proof}
Again by using the notation of Remark \ref{Hi}, it will
suffice to construct an injective map $D_k \longrightarrow C_k$. Note that in this case there is only one torsion element in $G(S)$ and so there is only one possible value $k$ such that $D_k\neq \emptyset$, and only one element $x$ in $D_k$. Hence it suffices to construct an element in $C_k$. Assume that $x + n_1 = \sum _{i=2}^bs_in_i$ is a maximal expression with $l_x= \sum _{i=2}^bs_i - \ord (x) - 1 >0$. Then, $l_x < \sum^b_{i=2}s_i$ and one can find values $r_i \leq s_i$, for any $i=2, \dots , b$, such that $\sum_{i=2}^b r_i = l_x$. Let $y_x=\sum_{i=2}^b(s_i-r_i)n_i$. By Lemma \ref{land}(2), $y_x\in C_k$ and we are done.
\end{proof}

Our next result shows the new case that the Hilbert function of
monomial curves of embedding dimension $4$ whose tangent cone is
Buchsbaum is non-decreasing. First we need to introduce the
following notation.

\begin{defn}\label{r}
Assume that $x\in S$. Define $r_x:=(r_1, \dots , r_b)$, where
$x=\sum^b_{i=1}r_in_i$ is the maximal expression in which
$$\begin{array}{l}r_1=\max\{r'_1 \mid r'n_1 \mbox{is part of a maximal expression of } x \},\\
r_2=\max\{r'_2 \mid r_1n_1 + r'_2n_2 \mbox{ is part of a maximal expression of } x \},\\
... \\
r_b=\max\{r'_b \mid r_1n_1 \! + \! r_2n_2 \! + \! \cdots \! + \! r_{b-1}n_{b-1} \! + \!
r'_bn_b \mbox{ is part of a maximal expression of } x \}.
\end{array}$$

Note that $r_b$ is determined by the previous ones. For $x,y\in S$,
we use $r_x\cdot r_y$ to denote the vector $(r_1 \cdot s_1, \dots ,
r_b \cdot s_b)$, where $r_y=(s_1, \dots, s_b)$. We denote by $0$ the
null vector.

It is easy to see that if $x = \sum _{i=1}^b r_in_i$ with $r_x =
(r_1, \dots , r_b)$ and $x' = \sum _{i=1}^b r'_in_i$ is a
subrepresentation with $r'_i \leq r_i$ for all $i=1, \dots , b$,
then $r_{x'} = (r'_1, \dots , r'_b)$.
\end{defn}

Next lemma allows to construct torsion elements from torsion maximal
expressions.

\begin{lem}
\label{subtorsion} Let $S$ be a numerical semigroup minimally
generated by $n_1< \cdots < n_b$. Let $x=\sum_{i=1}^b r_in_i\in
T$ a maximal expression such that $x + cn_1 = y =
\sum_{i=1}^b s_in_i$, a maximal expression with $\ord (y)> \ord(x) +
c$. Assume that $r_j, s_j \neq 0$ for some $j =1, \dots b$. Then,
$x' = \sum_{i=1, i\neq j}^b r_in_i +(r_j-1)n_j \in T$ with $\tord
(x')\leq c$.
\end{lem}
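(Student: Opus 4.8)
The plan is to realize that $x'$ is just $x$ with one copy of $n_j$ removed, and that the very relation witnessing $x\in T$ remains a witness after this removal. So first I would record the order of $x'$. Since $r_j\neq 0$, the expression $x' = \sum_{i\neq j}r_in_i + (r_j-1)n_j$ is a subrepresentation of the given maximal expression of $x$, so by Remark \ref{rem1} it is itself a maximal expression of $x' = x - n_j$, and hence $\ord(x') = \ord(x) - 1$.

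Next I would transport the torsion witness $c$ from $x$ to $x'$ via the key identity
\[ x' + cn_1 = (x - n_j) + cn_1 = (x + cn_1) - n_j = y - n_j. \]
Here is where the second hypothesis enters: since $s_j\neq 0$, the expression $\sum_{i\neq j}s_in_i + (s_j-1)n_j$ is a subrepresentation of the maximal expression $y = \sum_{i=1}^b s_in_i$, so by Remark \ref{rem1} again it is a maximal expression of $y - n_j$, giving $\ord(x' + cn_1) = \ord(y) - 1$.

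Combining the two computations with the hypothesis $\ord(y) > \ord(x) + c$ yields
\[ \ord(x' + cn_1) = \ord(y) - 1 > (\ord(x) + c) - 1 = \ord(x') + c . \]
Thus $c$ (which is necessarily positive, since $c = 0$ would force $\ord(x) > \ord(x)$) is a positive witness that $x'\in T$, and by the minimality in the definition of the torsion order we conclude $\tord(x')\leq c$.

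I do not expect a genuine obstacle here: the argument is entirely formal once one notices that subtracting $n_j$ preserves maximality on \emph{both} sides of $x + cn_1 = y$. The only thing to be careful about is that this double use of Remark \ref{rem1} requires a \emph{single} index $j$ with $r_j\neq 0$ and $s_j\neq 0$ simultaneously, which is exactly the hypothesis of the lemma; without a common such $j$ one could not guarantee that both $x - n_j$ and $y - n_j$ remain maximal expressions, and the clean telescoping of orders would break down.
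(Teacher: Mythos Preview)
Your proof is correct and follows essentially the same approach as the paper's: subtract $n_j$ from both the maximal expression of $x$ and that of $y = x + cn_1$, invoke Remark~\ref{rem1} on each side to preserve maximality, and compare the resulting orders. The paper's version is simply more compressed, writing the chain of equalities and the strict inequality in a single line.
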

\begin{proof}
We have that $y' = x' + cn_1= \sum_{i=1, i\neq j}^b r_in_i +(r_j-1)n_j +
cn_1 = \sum_{i=1, i\neq j}^b s_in_i + (s_j-1)n_j$ with $\ord (x') +
c =  \sum_{i=1, i\neq j}^b r_i  +(r_j-1) + c < \sum_{i=1, i\neq j}^b
s_i + (s_j-1) = \ord (y')$ because both $x', y'$ are
subrepresentations of maximal expressions, and so maximal
expressions. Hence $x'\in T$ with $\tord(x')\leq c$.
\end{proof}

\begin{prop}
\label{null} Assume that $G(S)$ is Buchsbaum and let $x$ be a
torsion element. Then $r_x\cdot r_{x+n_1}=0$.
\end{prop}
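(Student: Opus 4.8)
The plan is to show that the two canonical maximal expressions $r_x=(r_1,\dots,r_b)$ and $r_{x+n_1}=(s_1,\dots,s_b)$ have coordinatewise disjoint supports, i.e.\ $r_is_i=0$ for every index $i$. The argument rests on two consequences of the Buchsbaum hypothesis. First, by Remark \ref{tordB} applied with $k=1$ (and since torsion orders are always positive), every torsion element $x$ satisfies $\tord(x)=1$, so $\ord(x+n_1)>\ord(x)+1$. Second, $G(S)$ being Buchsbaum means $G(S)_+\cdot H^0_{G(S)_+}(G(S))=0$, which says exactly that for $s\in T$ and $z\in S$ with $\ord(z)\geq 1$ the product $(t^z)^{\ast}(t^s)^{\ast}$ vanishes in $G(S)$, equivalently $\ord(s+z)>\ord(s)+\ord(z)$.

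I would first dispose of the index $i=1$: since $\tord(x)=1$, Remark \ref{rem5} gives that the $n_1$-coefficient of any maximal expression of $x+n_1$ is zero, so $s_1=0$ and hence $r_1s_1=0$. For $j\in\{2,\dots,b\}$ I argue by contradiction, assuming $r_j\neq 0$ and $s_j\neq 0$. Applying Lemma \ref{subtorsion} with $c=\tord(x)=1$ to the maximal expressions $x=\sum_{i=1}^b r_in_i$ and $x+n_1=\sum_{i=1}^b s_in_i$ yields the torsion element $x':=x-n_j\in T$. Because $r_j\neq 0$, deleting one copy of $n_j$ from the maximal expression of $x$ is again maximal (Remark \ref{rem1}), so $\ord(x')=\ord(x)-1$; equivalently $\ord(x'+n_j)=\ord(x')+1$, which means $(t^{n_j})^{\ast}(t^{x'})^{\ast}=(t^x)^{\ast}\neq 0$ in $G(S)$. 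But $(t^{x'})^{\ast}\in H^0_{G(S)_+}(G(S))$ while $(t^{n_j})^{\ast}\in G(S)_+$ since $\ord(n_j)=1$, so the Buchsbaum hypothesis forces this product to be zero, a contradiction. Therefore $r_js_j=0$ for all $j\geq 2$, and combined with the case $i=1$ we conclude $r_x\cdot r_{x+n_1}=0$.

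The only delicate point is producing the auxiliary torsion element $x'=x-n_j$, which is precisely where both hypotheses $r_j\neq 0$ and $s_j\neq 0$ are used and is supplied by Lemma \ref{subtorsion}. Everything else is bookkeeping: once $x'\in T$ is available, the fact that $n_j$ appears in a maximal expression of $x$ keeps the order additive along $x'+n_j=x$, which directly contradicts that the initial form of the torsion element $x'$ is annihilated by $G(S)_+$.
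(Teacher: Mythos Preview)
Your proof is correct and follows essentially the same approach as the paper's. Both arguments hinge on the observation that Buchsbaumness forbids $x$ from having a torsion proper subrepresentation, and then invoke Lemma~\ref{subtorsion} to manufacture exactly such a subrepresentation $x'=x-n_j$ whenever $r_j,s_j\neq 0$; you simply spell out more explicitly the case $i=1$ via Remark~\ref{rem5} and the order-additivity step via Remark~\ref{rem1}, whereas the paper's version is more compressed.
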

\begin{proof}
Since $G(S)$ is Buchsbaum, then any torsion element is annihilated
by all elements in $G(S)_+$. Thus $x = \sum^b_{i=1}r_in_i$ cannot have
a torsion subrepresentation. By the above Lemma \ref{subtorsion} we
must have that $\min\{r_i,s_i\}=0$, that is $r_i\cdot s_i=0$ for
$i=1,\ldots,b$, where $(s_1, \dots , s_b) = r_{x+n_1}$.
\end{proof}

For embedding dimension $4$ we have the following:

\begin{prop}
Assume that $b=4$ and that for any $x\in T $,
$\tord(x)=1$. Assume also that $r_x \cdot r_{x +n_1} = 0$ for all
$x\in T$. Then the Hilbert function of $k[[S]]$ is non-decreasing.
\end{prop}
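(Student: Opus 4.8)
The plan is to invoke the criterion of Remark \ref{Hi}: since $\Hi(k)-\Hi(k-1)=\#C_k-\#D_k$, it suffices to exhibit, for every $k\geq 1$, an injective map $D_k\longrightarrow C_k$. I would construct it by a controlled truncation of the maximal expression of $x+n_1$, in the spirit of Lemma \ref{land} and of the three-generated case treated above.

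Fix $x\in D_k$. Then $x\in T$, $\ord(x)=k-1$ and $\ord(x+n_1)>k$, and by hypothesis $\tord(x)=1$; hence by Remark \ref{rem5} we may write $x+n_1=s_2n_2+s_3n_3+s_4n_4$ as a maximal expression, so that $r_{x+n_1}=(0,s_2,s_3,s_4)$. Put $l_x=\ord(x+n_1)-k>0$. Lemma \ref{land}(1) gives $l_x<s_2+s_3$, so I may delete $l_x$ generators from among $n_2,n_3$ alone, never touching $n_4$. I would make this choice canonical by a greedy rule, removing copies of $n_2$ first: set $t_2=\min\{l_x,s_2\}$ and $t_3=l_x-t_2$, and define
\[ y_x:=(s_2-t_2)n_2+(s_3-t_3)n_3+s_4n_4. \]
By Lemma \ref{land}(2) we have $y_x\in C_k$, so $x\mapsto y_x$ is a well-defined map $D_k\to C_k$.

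To prove injectivity, suppose $x,x'\in D_k$ with $y_x=y_{x'}$. Writing $w_x=t_2n_2+t_3n_3$ and $w_{x'}=t'_2n_2+t'_3n_3$, the equality gives $x-x'=w_x-w_{x'}$. When $w_x-w_{x'}$ is nonnegative or nonpositive (that is, not of mixed sign), the argument is exactly the one used in the three-generated case: adding any positive combination of generators strictly increases the order, so $\ord(x)=\ord(x')=k-1$ forces $w_x=w_{x'}$, whence $x=x'$. The only remaining possibility is that $w_x-w_{x'}$ has mixed signs, which under the greedy rule can happen only when one of the two removals exhausts its $n_2$-supply and spills into $n_3$ while the other does not.

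This mixed-support situation is where I expect the real work to lie, and it is precisely where the two standing hypotheses enter. The condition $r_x\cdot r_{x+n_1}=0$ says that the generators occurring in the canonical maximal expression of $x$ and those occurring in $x+n_1$ are disjoint among $\{n_2,n_3,n_4\}$; combined with Lemma \ref{general} (a torsion element must use $n_3$ or $n_4$) and Lemma \ref{subtorsion} (deleting a generator common to the expressions of $x$ and $x+cn_1$ again yields a torsion element), a mixed-sign relation $x-x'=\alpha n_2-\beta n_3$ should be shown to force either a non-maximal expression of $x$ or of $x'$, or else a forbidden torsion element of the shape $an_1+bn_2$ excluded by Lemma \ref{general}. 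Carrying out this finite bookkeeping—tractable because with $b=4$ the excess $l_x$ is distributed over only the two generators $n_2,n_3$—is the main obstacle; once the mixed case is ruled out, injectivity follows and the Hilbert function of $k[[S]]$ is non-decreasing.
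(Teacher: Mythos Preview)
Your plan is sound up to the definition of the map, but the injectivity argument has a genuine gap: the ``mixed-sign'' case is not handled, and your greedy truncation rule can actually produce collisions. Consider $x,x'\in D_k$ with canonical data $r_x=(0,0,0,k{-}1)$, $r_{x+n_1}=(0,s_2,s_3,0)$ where $s_2\geq1$ and $s_3>k$, and $r_{x'}=(0,r'_2,0,r'_4)$ with $r'_2\geq1$, $r_{x'+n_1}=(0,0,s'_3,0)$. Both satisfy $r_x\cdot r_{x+n_1}=0$ and Lemma~\ref{general}. Under your rule, $l_x=s_2+s_3-k>s_2$ forces $t_2=s_2$, so $y_x=(s_3-l_x+s_2)n_3=kn_3$; and for $x'$ you get $y_{x'}=(s'_3-l_{x'})n_3=kn_3$ as well. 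Nothing in the lemmas you invoke rules out $x\neq x'$ here, so your map need not be injective as written.

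The paper repairs exactly this point by defining $y_x$ in three cases according to which of $s_2,s_3$ vanish. When $s_2,s_3>0$ it sets $l=\min\{l_x,s_2-1\}$ (not $s_2$), so $y_x$ retains a positive $n_2$-coefficient; this single tweak separates case~(c) from case~(b) and kills the collision above. More importantly, the hypothesis $r_x\cdot r_{x+n_1}=0$ is used not as a tool for excluding mixed relations, but structurally: when $s_2,s_3>0$ it forces $r_2=r_3=0$, and combined with $r_1=0$ (which follows from the hypothesis that every torsion order is~$1$, a step you did not carry out) one gets $x=r_4n_4$. Two such elements of the same order are then equal on the nose, with no sign-chasing needed. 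The remaining cross-cases are disposed of by the uniqueness of the canonical expression from Definition~\ref{r}, since subrepresentations of canonical expressions are again canonical. Your sketch misses this decomposition and the pure-$n_4$ observation, which is where the two hypotheses actually do their work.
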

\begin{proof}
Let $x$ be a torsion element with $r_x = (r_1, r_2, r_3, r_4)$ and
$r_{x+n_1} = (s_1, s_2, s_3, s_4)$. First note that since the
torsion order of any torsion element is $1$, then $r_1 = 0$. Also that
$s_1=0$. Now, by Lemma \ref{land}(1)
\begin{equation}\label{e1}
s_2+s_3 > \ord(x+n_1)-\ord(x)-1=l_x. \end{equation}

Let  $l=\min\{l_x,s_2-1\}$, so that $s_2-l>0$ and
$s_3-l_x+l\geq0$. Then we may  define

$$y_x:=\left\{\begin{array}{lll}
x+n_1-l_xn_2 & \mbox{ if } s_3=0 & (a) \\
x+n_1-l_xn_3 & \mbox { if } s_2=0 & (b) \\
 (s_2-l)n_2+(s_3-l_x+l)n_3 & \mbox { if } s_2, s_3\neq 0 & (c)
 \end{array}\right.$$

Observe that if $x\in D_k$, then $y_x\in C_k$, by  Lemma
\ref{land}(2). Thus the map
\[ \begin{array}{cll}
D_k &\longrightarrow & C_k\\ x&\mapsto &y_x \end{array}
\]
is well defined. Let us see that it is injective.

Assume that there exists another torsion element $x'\in D_k$ with
$y_{x'}=y_x$. Let $l_{x'}:=\ord(x'+n_1)-\ord(x')-1$ and
$r_{x'}=(r_1, r_2, r_3, r_4)$, $r_{x'+n_1}=(s'_1, s'_2, s'_3, s'_4)$
be the corresponding vectors (note that $s'_1=0$). By definition of
$D_k$ we have that $\ord(x)=\ord(x') = k-1$.

Now we are going to distinguish several possibilities:

\begin{enumerate}

\item[(i)] If both $y_x$ and $y_{x'}$ are in case (a), and assuming that $l_x\geq l_{x'}$,
then $x+n_1=y_x+l_xn_2=y_{x'}+l_xn_2=x'+n_1+(l_x-l_{x'})n_2$.
Canceling $n_1$ we get that $x = x' + (l_x - l_{x'})n_2$. So
$l_x=l_{x'}$ because $\ord(x)=\ord(x')$, and we get that $x=x'$.

\item[(ii)] If both $y_x$ and $y_{x'}$ are in case (b), then similarly to (i), we have $x=x'$.

\item[(iii)] If both $y_x$ and $y_{x'}$ are in case (c), then by hypothesis we have that $x=r_1n_1+r_4n_4$ and $x'=r'_1n_1+r'_4n_4$. But as noted at the beginning, $r_1 = r'_1 = 0$, so $x=r_4n_4$ and $x'=r'_4n_4$, which implies that $x = x'$ since both are maximal expressions and $\ord(x)=\ord(x')=k-1$.

\item[(iv)] If $y_x$ is in  case  (a) and
$y_{x'}$ is in case (b), then $y_x=(s_2-l_x)n_2+s_4n_4$ and
$y_{x'}=(s'_3-l_{x'})n_3+s'_4n_4$. Hence $s_2-l_x = s'_3-l_{x'} =
0$, by the uniqueness of such maximal expressions. But this is a
contradiction because $s_3 = 0$ and so $s_2 > l_x$.

\item[(v)] If $y_{x'}$ is in case (a) and $y_{x}$ is in case (c), then
$(s'_2-l_{x'})n_2+s'_4n_4=(s_2-l
)n_2+(s_3-l_x+l)n_3$. Once more because of the uniqueness of such
maximal expressions we have that $s_3-l_x+l=0$. Now
$l=s_2-1<l_x$ since $s_3\neq0$.  Hence $y_x=n_2$ which contradicts
the fact that $\ord(y_x)=\ord(x)+1>1$.

\item[(vi)] If $y_{x'}$ is in case (b) and $y_{x}$ is in case (c), then
$(s'_3-l_{x'})n_3+s'_4n_4=(s_2-l)n_2+(s_3-l_x+l)n_3$.
Again because of the uniqueness of the involved maximal expressions
we have that $s_2 - l = 0$, which is a contradiction to our
definition of $l$.
\end{enumerate}
\end{proof}

As a consequence of the above two propositions and taking into
account that if $G(S)$ is Buchsbaum then the torsion order of any
element in $T$ is equal to $1$, we get our desired result:

\begin{thm}
Assume that $b=4$ and $G(S)$ is Buchsbaum. Then, the Hilbert
function of $R$ is non-decreasing.
\end{thm}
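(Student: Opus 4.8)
The plan is to deduce this directly from the two preceding propositions, so the argument will be very short: the genuine combinatorics---the construction of an injective map $D_k \longrightarrow C_k$ in the four-generated case---has already been carried out in the proposition immediately above, and Proposition \ref{null} supplies one of that proposition's two hypotheses. The only remaining task is to verify that the Buchsbaum assumption delivers both hypotheses.

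First I would record that ``Buchsbaum'' means $1$--Buchsbaum, i.e.\ $G(S)_+ \cdot H^0_{G(S)_+}(G(S)) = 0$. Applying Remark \ref{tordB} with $k=1$, every $s \in T$ has torsion order at most $1$. Since by definition $\tord(s) = \min\{c > 0 \mid \ord(s+cn_1) > \ord(s)+c\} \geq 1$, this forces $\tord(x) = 1$ for every $x \in T$. This is precisely the first hypothesis of the preceding proposition. Next I would invoke Proposition \ref{null}: because $G(S)$ is Buchsbaum, we have $r_x \cdot r_{x+n_1} = 0$ for every torsion element $x$, which is exactly its second hypothesis. With $b = 4$ and both hypotheses in force, the preceding proposition applies verbatim and yields that the Hilbert function of $R = k[[S]]$ is non-decreasing, as desired.

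I do not expect any genuine obstacle here, as all the difficulty has been absorbed into the two propositions (and, behind them, into Lemma \ref{land} and Lemma \ref{subtorsion}). The single point that requires a moment's care is the reduction of the Buchsbaum property to the uniform condition $\tord(x)=1$ for all $x \in T$: one must pair the upper bound coming from Remark \ref{tordB} with the trivial lower bound $\tord(x)\geq 1$ built into the definition of the torsion order, so that the inequality collapses to an equality and the hypothesis of the preceding proposition is met exactly.
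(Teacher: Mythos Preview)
Your proposal is correct and matches the paper's own argument essentially verbatim: the paper states the theorem as an immediate consequence of the two preceding propositions together with the observation that the Buchsbaum hypothesis forces $\tord(x)=1$ for every $x\in T$. Your added care in pairing the bound from Remark~\ref{tordB} with the trivial lower bound $\tord(x)\geq 1$ just makes explicit what the paper leaves implicit.
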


\begin{ex}
We quote the following example from \cite{DMS2}. Let $S = <10, 17, 23, 82>$. The Apéry table is

\[
\begin{array}{|c|c|c|c|c|c|c|c|c|c|c|c|}
\hline \Ap(S)&0&17&23&34&46&51&68&69&82&85 \\
\hline \Ap(M)&10&17&23&34&46&51&68&69&82&85 \\
\hline \Ap(2M)&20&27&33&34&46&51&68&69&92&85 \\
\hline \Ap(3M)&30&37&43&44&56&51&68&69&92&85 \\
\hline \Ap(4M)&40&47&53&54&66&61&68&79&92&85 \\
\hline \Ap(5M)&50&57&63&64&76&71&78&89&102&85 \\
\hline \Ap(6M)&60&67&73&74&86&81&88&99&102&95 \\
\hline
\end{array}
 \]

\medskip

We have that $H^0_{G(S)_+}(G(S)) = \{(t^{82})^*, (t^{92})^*\}$, so $\la (H^0_{G(S)_+}(G(S)))=2$, and $G(S)$ is Buchsbaum by \cite[Remark 3.9]{DMS2} (or check it directly by using the Apéry table). The Hilbert function of $k[[S]]$ is $H(n) = \{1,4,5,7,9,9,10\rightarrow \}$.
\end{ex}

Finally, we consider balanced numerical semigroups. The notion of
balanced has been considered in the case of $4$ generated numerical
semigroups by Patil-Tamone in \cite{PT}. Our definition
generalizes their definition to any embedding dimension. We shall
prove that the Hilbert function of any balanced numerical semigroup
is non-decreasing, extending \cite[Theorem 2.11]{PT}.

\begin{defn}
$S$ is called balanced, if $n_i+n_j=n_{i-1}+n_{j+1}$ for all $i\neq
j\in\{2,\ldots,b-1\}$.
\end{defn}

\begin{rem}
$S$ is balanced if and only if $n_i+n_j=n_1+n_{i+j-1}$ for all
$i\neq j\in\{2,\ldots,b-1\}$ with $i+j\leq b-1$.
\end{rem}

In order to make more clear our arguments and for the purposes of this paper, we single out in a new
definition the basic property of balanced numerical semigroups we
shall use.

\begin{defn}
We say $S$ has a cyclic $1$-torsion, if for each element $x\in T$ with
$\tord(x)=1$, there exists $2\leq i\leq b-1$ such that
$x+n_1=s_in_i+s_bn_b$ is the maximal expression that satisfies
Definition \ref{r}.
\end{defn}

It is obvious that all $3$ generated semigroups have a cyclic $1$-torsion. And also
balanced numerical semigroups have a cyclic $1$-torsion.

\begin{prop}
Assume that $S$ is balanced. Then $S$ has a cyclic $1$-torsion.
\end{prop}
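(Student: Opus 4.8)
The plan is to analyze, for a torsion element $x$ with $\tord(x)=1$, the canonical maximal expression of $x+n_1$ prescribed by Definition \ref{r}, and to show that balancedness forces its support to meet $\{2,\dots,b-1\}$ in at most one index. Write $x+n_1=\sum_{i=1}^b s_in_i$ for this canonical expression. Since $\tord(x)=1$, Remark \ref{rem5} applies with $c=1$ and tells us that \emph{every} maximal expression of $x+n_1$ has vanishing first coefficient; in particular $s_1=0$. The goal is therefore reduced to proving that at most one of $s_2,\dots,s_{b-1}$ is nonzero, for then $x+n_1=s_in_i+s_bn_b$ for the unique middle index $i$ that occurs (allowing a zero coefficient when the support is contained in $\{b\}$ or in a single middle index), which is exactly the cyclic $1$-torsion condition.

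The key device will be the balanced identity $n_i+n_j=n_{i-1}+n_{j+1}$, valid for $i\neq j$ in $\{2,\dots,b-1\}$, read as a rewriting rule on expressions: replacing a pair $n_i+n_j$ by $n_{i-1}+n_{j+1}$ leaves the total number of generators unchanged, hence turns a maximal expression into another maximal expression of the same element (one uses Remark \ref{rem1} to see that the two--generator piece is itself maximal, so its order is preserved). I would argue by contradiction: suppose two middle generators occur, say $s_i,s_j\geq 1$ with $2\leq i<j\leq b-1$. Applying the rewriting to one copy of $n_i+n_j$ produces a new maximal expression of $x+n_1$ whose coefficient vector agrees with the old one except that position $i-1$ has increased by one, positions $i$ and $j$ have each dropped by one, and position $j+1$ has increased by one.

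I then split into two cases according to $i$. If $i=2$, the rewriting raises the first coefficient from $0$ to at least $1$, contradicting the fact (Remark \ref{rem5}) that every maximal expression of $x+n_1$ has first coefficient $0$. If instead $i\geq 3$, then position $i-1\geq 2$ is the first coordinate at which the new expression differs from the canonical one, and there it is strictly larger while all earlier coordinates (including $s_1=0$) are unchanged; this contradicts the lexicographic maximality built into Definition \ref{r}, whose greedy process would have selected a larger value at step $i-1$. Either way we reach a contradiction, so at most one middle generator appears and the cyclic $1$-torsion property follows.

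The main point to handle carefully is precisely this bookkeeping: one must check that the rewritten expression is still maximal (so that it is a legitimate competitor in the greedy procedure of Definition \ref{r}) and that the strict increase occurs at coordinate $i-1$ with no compensating decrease at any earlier coordinate; once this is pinned down, both cases collapse immediately. The degenerate case $b=3$ needs no argument, since then $\{2,\dots,b-1\}=\{2\}$ and $x+n_1=s_2n_2+s_3n_3$ is automatically of the required shape.
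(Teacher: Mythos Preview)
Your proof is correct and follows essentially the same approach as the paper's: both assume two middle indices $i<j$ carry nonzero coefficients and use the balanced relation $n_i+n_j=n_{i-1}+n_{j+1}$ to produce an alternative maximal expression that contradicts the lexicographic maximality built into Definition \ref{r}. The paper works with the sub-element $y=n_i+n_j$ (via the subrepresentation clause of Definition \ref{r}) rather than rewriting the full expression of $x+n_1$, and does not split into the cases $i=2$ versus $i\geq 3$ (Definition \ref{r} alone handles both uniformly), but these are cosmetic differences.
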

\begin{proof}
Let $s\in T$ with $\tord(x)=1$, and let $x + n_1 = \sum _{i=1}^b
s_in_i$ be a maximal expression with $r_{x+n_1} = (s_1, \dots ,
s_b)$. Recall that $s_1 =0$ because $\tord(x)=1$. Assume that there
exist $s_i, s_j \neq 0$ for some $1 < i < j <b$. This will imply in
particular that $y = n_i + n_j$ is also a maximal expression that
satisfies definition \ref{r}. But then, since $n_i + n_j = n_{i-1} +
n_{j+1}$, we have a contradiction because $y = n_{i-1} + n_{j+1}$ is
also a maximal expression.
\end{proof}

Now we have:

\begin{prop}
The Hilbert function of a semigroup $S$ with a cyclic $1$-torsion is non-decreasing.
\end{prop}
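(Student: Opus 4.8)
The strategy is exactly the one used in the embedding dimension $3$ and Buchsbaum embedding dimension $4$ cases: for each $k$ I would construct an injective map $D_k \longrightarrow C_k$, so that $\Hi(k) - \Hi(k-1) = \# C_k - \# D_k \geq 0$ by Remark \ref{Hi}. The cyclic $1$-torsion hypothesis is precisely what forces every relevant maximal expression to involve only two generators, one of which is the top generator $n_b$, which is what makes the construction go through in arbitrary embedding dimension. First I would take $x \in D_k$; by definition $x$ is a torsion element with $\ord(x) = k-1$, $\tord(x) = 1$, and the cyclic $1$-torsion assumption gives a maximal expression $x + n_1 = s_i n_i + s_b n_b$ for some $2 \leq i \leq b-1$ (with $s_1 = 0$). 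I would set $l_x = \ord(x+n_1) - \ord(x) - 1 > 0$ and, using Lemma \ref{land}(1), note $l_x < s_i$, so that $y_x := (s_i - l_x)n_i + s_b n_b$ is a well-defined maximal expression; Lemma \ref{land}(2) then guarantees $y_x \in C_k$ where $k = \ord(x)+1$.

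The heart of the argument is injectivity, and this is where I expect the main obstacle. Suppose $x, x' \in D_k$ with $y_x = y_{x'}$. Write the cyclic expressions $x+n_1 = s_i n_i + s_b n_b$ and $x'+n_1 = s'_j n_j + s'_b n_b$. Because $y_x = y_{x'}$ are both maximal expressions, the uniqueness of maximal expressions of elements of $\Ap(S)$ (equivalently, the uniqueness guaranteed by Remark \ref{Ap}-type reasoning) should force the same generator to appear, i.e.\ $i = j$; then comparing coefficients on $n_i$ gives $s_i - l_x = s'_i - l_{x'}$, while the $n_b$ coefficients coincide automatically. The relation $y_x = y_{x'}$ rewrites as $x + n_1 - l_x n_i = x' + n_1 - l_{x'}n_i$, hence $x = x' + (l_{x'} - l_x)n_i$, and since $\ord(x) = \ord(x') = k-1$ this is only possible when $l_x = l_{x'}$, whence $x = x'$. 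The delicate point is handling the case where the two generators $n_i$ and $n_j$ differ: here I would argue that $y_x$ then has a maximal expression supported on $\{n_i, n_b\}$ while $y_{x'}$ is supported on $\{n_j, n_b\}$, and the uniqueness of the maximal expression of a fixed element of $\Ap(S)$ (each $\w \in \Ap(S)$ has a \emph{unique} maximal expression of its particular shape) rules this out unless the supports coincide.

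In writing this up I would lean heavily on Lemma \ref{land}, which is stated for arbitrary $b$ and already does the arithmetic for part (1) and (2), so the only genuinely new content is the injectivity bookkeeping, now dramatically simplified relative to the four-generated Buchsbaum case because the cyclic hypothesis collapses the six-way case analysis (cases (a)--(c) and their cross-comparisons (i)--(vi) in the previous proposition) into the single two-generator pattern $s_i n_i + s_b n_b$. The main subtlety to verify carefully is that $y_x \in \Ap(S)$ or, when $y_x - n_1 \in S$, that $\ord(y_x - n_1) < k-1$, which is exactly the content of Lemma \ref{land}(2) and requires no separate argument. Thus the proof reduces to invoking Lemma \ref{land} to define the map and then the uniqueness of the cyclic maximal expression to establish injectivity.
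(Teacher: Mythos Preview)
Your approach is the same as the paper's: define $y_x=(s_i-l_x)n_i+s_bn_b$ via Lemma~\ref{land} and argue injectivity. The construction of the map is fine.

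The gap is in your injectivity argument. You justify $i=j$ and $s_b=s'_b$ by appealing to ``the uniqueness of maximal expressions of elements of $\Ap(S)$'' and ``Remark~\ref{Ap}-type reasoning''. Neither is available here: Remark~\ref{Ap} and the uniqueness of maximal expressions are specific to embedding dimension $3$; for $b\geq 4$ an element can have several distinct maximal expressions, and moreover $y_x$ need not lie in $\Ap(S)$ at all (Lemma~\ref{land}(2) only places it in $C_k$). So from the bare equality $(s_i-l_x)n_i+s_bn_b=(s'_j-l_{x'})n_j+s'_bn_b$ you cannot ``compare coefficients'' without a further reason.

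The missing ingredient is Definition~\ref{r}. The cyclic $1$-torsion hypothesis says that $x+n_1=s_in_i+s_bn_b$ is \emph{the} maximal expression satisfying Definition~\ref{r}, i.e.\ $r_{x+n_1}=(0,\dots,s_i,\dots,s_b)$. By the observation at the end of Definition~\ref{r}, any subrepresentation of an $r_x$-expression is itself the $r$-expression of the corresponding subelement; hence $r_{y_x}=(0,\dots,s_i-l_x,\dots,s_b)$. The same holds for $y_{x'}$. Since $r_{y_x}$ is uniquely determined by $y_x$, the equality $y_x=y_{x'}$ forces $i=j$, $s_i-l_x=s'_j-l_{x'}$, and $s_b=s'_b$, after which your conclusion $x=x'$ goes through. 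This is exactly how the paper closes the argument (in one line), and it is the point your write-up needs to make explicit.
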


\begin{proof}
We will proceed as in the previous cases. Assume that $x\in D_k$ and
let $x+n_1=s_in_i+s_bn_b$ be the corresponding unique maximal
expression that satisfies Definition \ref{r}. Then
$l_x=\ord(x+n_1)-\ord(x)-1 < s_i$ by Lemma 4.2(1). Now, define
$y_x:=(s_i-l_x)n_i+s_bn_b$. By Lemma 4.2(2), $y_x \in C_k$.

Assume that $y_x=y_{x'}$ for some $x,x'\in D_k$, with
$x'+n_1=s'_jn_j+s'_bn_b$ the chosen unique maximal expression. Then
$(s_i-l_x)n_i+s_bn_b=(s'_j-l_{x'})n_j+s'_bn_b$. Hence $s_b = s'_b$
and $i = j$. Now, assuming for instance that $s_i\geq s'_i$, we have
that $x +n_1 = x' + n_1 + (s_i - s'_i)n_i$ and so $x = x' + (s_i -
s'_i)n_i$. But on the other hand $\ord(x)=\ord(x')=k$, hence $x=x'$.
\end{proof}

As a consequence we extend to any embedding dimension the result
proven by Patil-Tamone for embedding dimension $4$, see
\cite[Theorem 2.11]{PT}.

\begin{thm}
Assume that $S$ is  balanced.  Then  the Hilbert function of $R$ is
non--decreasing.
\end{thm}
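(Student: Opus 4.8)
The plan is to obtain this theorem as an immediate corollary of the two propositions that precede it in the excerpt. The final statement asserts that a balanced numerical semigroup $S$ has non-decreasing Hilbert function, and the entire machinery needed has just been assembled. First I would invoke the Proposition asserting that if $S$ is balanced then $S$ has a cyclic $1$-torsion. This is the structural input that the balanced condition $n_i+n_j=n_{i-1}+n_{j+1}$ provides: it forces every maximal expression of $x+n_1$ (for $x\in T$ with $\tord(x)=1$) to involve at most one generator strictly between $n_1$ and $n_b$, because any pair $n_i+n_j$ with $1<i<j<b$ could be rewritten as $n_{i-1}+n_{j+1}$, contradicting maximality.

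Next I would apply the Proposition stating that the Hilbert function of a semigroup with a cyclic $1$-torsion is non-decreasing. The chain of implications is then immediate: balanced $\Rightarrow$ cyclic $1$-torsion $\Rightarrow$ non-decreasing Hilbert function. There is essentially nothing further to prove once both propositions are in place, so the proof is a one-line composition.

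The only subtlety worth flagging — though it has already been handled upstream — is the hypothesis on torsion order in the cyclic $1$-torsion definition, which restricts attention to elements $x\in T$ with $\tord(x)=1$. The proof of the cyclic $1$-torsion proposition uses exactly that $s_1=0$ (which holds precisely because $\tord(x)=1$, by Remark \ref{rem5}), and the injectivity argument in the non-decreasing proposition handles all of $D_k$ uniformly since every element of $D_k$ has torsion order one by construction (see Remark \ref{Hi}). Thus there is no gap arising from higher torsion orders. The main (and really the only) conceptual obstacle was establishing the cyclic $1$-torsion property itself from the balanced hypothesis, and that work is already done in the preceding proposition.

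Accordingly, I would write the proof as follows.

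\begin{proof}
Since $S$ is balanced, $S$ has a cyclic $1$-torsion by the previous proposition. Hence the Hilbert function of $R$ is non-decreasing by the preceding proposition.
\end{proof}
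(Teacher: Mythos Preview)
Your proposal is correct and matches the paper's approach exactly: the paper presents this theorem as an immediate consequence of the two preceding propositions (balanced $\Rightarrow$ cyclic $1$-torsion $\Rightarrow$ non-decreasing Hilbert function) and does not even supply a separate proof block. Your observation about $D_k$ consisting only of elements with torsion order one is also on point and justifies why the cyclic $1$-torsion hypothesis suffices.
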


\begin{ex}
It is not difficult to construct balanced numerical semigroups. The following example is a numerical semigroup having a cyclic $1$-torsion, which is neither balanced or Buchsbaum.  Let $S = <11, 18, 104, 118>$. The Apéry table is

\[
\begin{array}{|c|c|c|c|c|c|c|c|c|c|c|c|c|}
\hline \Ap(S)&0&18&36&54&72&90&104&108&118&122&136 \\
\hline \Ap(M)&11&18&36&54&72&90&104&108&118&122&136 \\
\hline \Ap(2M)&22&29&36&54&72&90&115&108&129&122&136 \\
\hline \Ap(3M)&33&40&47&54&72&90&126&108&140&133&147 \\
\hline \Ap(4M)&44&51&58&65&72&90&126&108&151&144&158 \\
\hline \Ap(5M)&55&62&69&76&83&90&126&108&162&144&169 \\
\hline \Ap(6M)&66&73&80&87&94&101&126&108&162&144&180 \\
\hline \Ap(7M)&77&84&91&98&105&112&126&119&162&144&180 \\
\hline \Ap(8M)&88&95&102&109&116&123&137&130&162&144&180 \\
\hline \Ap(9M)&99&106&113&120&127&134&148&141&162&155&180 \\
\hline \Ap(10M)&110&117&124&131&138&145&159&152&173&166&180 \\
\hline
\end{array}
 \]

\medskip

We have that the $1$-torsion elements of $H^0_{G(S)_+}(G(S))$ are exactly $$\{(t^{115})^*, (t^{133})^*, (t^{151})^*, (t^{169})^*\}$$ and that $126 = 7\cdot 18$, $144 = 8\cdot 18$, $162 = 9\cdot 18$, $180 = 10\cdot 18$ are maximal expressions satisfying Definition \ref{r}. The Hilbert function of $k[[S]]$ is $H(n) = \{1,4,7,7,7,7,7,8,9,10, 11 \rightarrow \}$.
\end{ex}

\medskip
Finally, we would like to thank the referees for a careful reading of the manuscript and several valuable comments and suggestions.



\begin{thebibliography}{2}

\bibitem{AMS}
F.~Arsalan, P.~Mete and M.~\c{S}ahin, \emph{Gluing and Hilbert
functions of monomial curves}, Proc. Amer. Math. Soc. \textbf{137}
(2009), 2225--2232.

\bibitem{BDF}
V.~Barucci,  D.~E.~Dobbs and M.~Fontana, \emph{Maximality properties
in numerical semigroups and applications to one-dimensional
analytically irreducible local domains}, Mem. Amer. Math. Soc. \textbf{125} (1997), 598.

\bibitem{BF}
V.~Barucci and R.~Fr\"{o}berg, \emph{Associated graded rings of one
dimensional analytically irreducible rings}, J. Algebra
\textbf{304} (2006), 349--358.

\bibitem{BF1}
V.~Barucci and R.~Fr\"{o}berg, \emph{Associated graded rings of one
dimensional analytically irreducible rings II}, J. Algebra
\textbf{336} (2011), 279--285.

\bibitem{B}
L.~Bryant, \emph{Goto numbers of a numerical semigroup ring and the
Gorensteiness of associated graded rings}, Comm. Algebra \textbf{38}  (2010), 2092--2128. 42--49.

\bibitem{CZ3}
T.~Cortadellas Ben\'{i}tez and S.~Zarzuela Armengou, \emph {On the
structure of the fiber cone of ideals with analytic spread one},
J. Algebra \textbf{317} (2007), 759--785.

\bibitem{CZ1}
T.~Cortadellas and S.~Zarzuela, \emph {Tangent
cones of numerical semigroup rings}, Contemp. Math.
\textbf{502} (2009), 45--58.

\bibitem{CZ2}
T.~Cortadellas and S.~Zarzuela, \emph{Apéry and micro-invariants of
a one-dimensional Cohen-Macaulay local ring and invariants of its
tangent cone}, J. Algebra \textbf{328} (2011), 94--113.

\bibitem{DMS2}
M.~D'Anna, V.~Micale and A.~Sammartano, \emph{On the Buchsbaumness of the associated graded ring of a one-dimensional local ring}, Comm. Algebra \textbf{37} (2009), 1594--1603.

\bibitem{DMS}
M.~D'Anna, V.~Micale and A.~Sammartano, \emph{On the
associated graded ring of a semigroup ring}, J. Commut. Algebra \textbf{3} (2011), 147--168.

\bibitem{DMS1}
M.~D'Anna, V.~Micale and A.~Sammartano, \emph{When the associated
graded ring of a semigroup is Complete Intersection},
arXiv:1111.1294, 2012, to appear in J. Pure Appl. Algebra.

\bibitem{Num}
M.~Delgado, P.~A.~Garc\'{i}a-S\'{a}nchez, J.~Morais,
\emph{NumericalSgps-a GAP package}, 0.95 (2006),
(http://www.gap-system.org/Packages/numericalsgps).

\bibitem{ES}
P.~Eakin and A.~Sathaye, \emph{Prestable ideals}, J. Algebra,
\textbf{41} (1976), 439--454.

\bibitem{E1}
J.~Elias, \emph{The conjecture of Sally on the Hilbert functions for
curve singularities}, J. Algebra \textbf{160} (1993),
42--49.

\bibitem{E2}
J.~Elias, \emph{On the deep structure of the blowing-up of curve singularities},
Math. Proc. Cambridge Philos. Soc. \textbf{131} (2001), 227--240.

\bibitem{EM}
J.~Elias and J.~Martínez-Borruel, \emph{Hilbert polynomials and the intersection of ideals},
Contemp. Math. \textbf{555} (2011), 63--70.


\bibitem{G}
A.~Garc\'{\i}a, \emph{Cohen-Macaulayness of the associated graded of
a semigroup ring}, Comm. Algebra \textbf{10} (1982), 393--415.

\bibitem{GR}
P.~A.~Garc\'{\i}a-S\'{a}nchez, and J.~C. ~Rosales,
\emph{Numerical Semigroups}, Developments in Mathematics, 20. Springer, New York, 2009.

\bibitem{GOR}
P.~A.~Garc\'{\i}a-S\'{a}nchez, I.~Ojeda, and J.~C. ~Rosales,
\emph{Affine semigroups having a unique Betti element},
arXiv:1203.4138, 2012.



\bibitem{GuRo}
S. K.~Gupta and L. G.~Roberts, \emph{Cartesian squares and ordinary singularities of curves}, Comm. Algebra \textbf{11} (1983), 127--182.

\bibitem{HW}
J.~Herzog and R.~Waldi, \emph{A note on the Hilbert function of a
one-dimesional Cohen-Macaulay ring}, Manuscripta Math. \textbf{16} (1975),
251--260.

\bibitem{Ku}
E.~Kunz, \emph{The value-semigroup of a one-dimensional Gorenstein ring}, Proc. Amer. Math. Soc. \textbf{25} (1970), 748--751.

\bibitem{M}
E.~Matlis, \emph{$1$-dimensional Cohen-Macaulay rings}, Lecture Notes in Mathematics, Vol. 327. Springer-Verlag, Berlin-New York, 1973.




\bibitem{O}
F.~Orecchia, \emph{One-dimensional local rings with reduced associated graded ring and their Hilbert functions},  Manuscripta Math. \textbf{32} (1980), 391--405.

\bibitem{PT}
D. P.~Patil and  G.~Tamone, \emph{CM defect and Hilbert functions of monomial curves},
Journal of Pure and Applied Algebra \textbf{215} (2011), 1539--1551.



\bibitem{Sa1}
J.~D.~Sally, \emph{On the associated graded rong of a local Cohen-Macaulay ring},
J. Math. Kyoto Univ. \textbf{17} (1977), 19--21.

\bibitem{Sa}
J.~D.~Sally, \emph{Numbers of generators of ideals in local rings},
Lecture Notes in pure and applied math. \textbf{35}, Marcel Dekker
Inc., 1978.

\bibitem{S}
V.~A.~Sapko, \emph{Associated graded rings of numerical semigroup
rings}, Comm. Algebra, \textbf{29} (2001),
4759--4773.

\bibitem{Sh}
Y.~H.~Shen, \emph{Tangent cone of numerical semigroup rings with
small embedding dimension}, Comm. Algebra \textbf{39} (2011), 1922--1940.

\bibitem{Se}
I.~Cristina~\c{S}erban, \emph{One-dimensional local rings and
canonical idelas}, PhD hesis, Università di Roma "La Sapienza", Dip.
di Matematica, 2006.




\end{thebibliography}
\end{document}